\date{}
\newtheorem{proposition}{Proposition}[section]
\newtheorem{theorem}[proposition]{Theorem}
\newtheorem{lemma}[proposition]{Lemma}
\newtheorem{corollary}[proposition]{Corollary}
\def\GK{{\rm  GK}\,}
\def\der{\partial }
\def\nFM0{{\nu }_{F,M_0}}
\def\nFN0{{\nu }_{F,N_0}}
\def\nGN0{{\nu }_{G,N_0}}
\def\N0{ {\bf N}_0 }
\def\Xpm{X^{\pm }}
\def\s{\sigma}
\def\Z{\mathbb{Z}}
\def\l1{{\lambda}_1}
\def\a{\alpha}
\def\a0{ {\alpha }_0}
\def\a1{ {\alpha }_1}
\def\l{\lambda}
\def\o{\omega}
\def\nFGM0{{\nu }_{F,G,M_0}}
\def\nFN0{{\nu}_{F,N_0}}
\def\sm{{\sigma}^m}
\def\sm1{{\sigma}^{-1}}
\def\smtp1{{\sigma}^{-t+1}}
\def\o{\omega }
\def\S1{S^{-1}}
\def\Xpm1{X^{\pm 1}_1}
\def\sPM1{{\sigma }^{\pm 1}}
\def\sMP1{{\sigma }^{\mp 1 }}
\def\d{\delta}
\def\di{{\rm d.ind}}
\def\L{\Lambda}
\def\CA{{\cal A}}
\def\Ytm1{Y^{t-1}}
\def\Yim1{Y^{i-1}}
\def\CN{{\cal N}}
\def\CS{{\cal S}}
\def\Aut{{\rm Aut}}
\def\dim{{\rm dim }}
\def\ker{ {\rm ker } }
\def\gr{ {\rm gr} }
\def\D{ \Delta }
\def\SL2Z{ {\rm SL}_2({\bf Z}) }
\def\CR{ {\cal R}}
\def\Gp1{ G^{1 , 1 } }
\def\P11{ P^{-1 , 1 } }
\def\Pp1{ P^{1 , 1 } }
\def\Supp{{\rm Supp}}
\def\nCLsr{{}^\nu\kern-2pt {\cal L}^{\sigma , \rho  }}
\def\nP{{}^\nu \kern-2pt P}
\def\nL{{}^\nu\kern-2pt L}
\def\nLL{{}^\nu\kern-2pt \Lambda}
\def\nPsr{{}^\nu\kern-2pt P^{\sigma , \rho  }}
\def\nLsr{{}^\nu\kern-2pt L^{\sigma , \rho  }}
\def\nuCL{{}^\nu\kern-2pt  {\cal L}}
\def\nCLsr{{}^\nu\kern-2pt {\cal L}^{\sigma , \rho  }}
\def\nCL1m{{}^\nu\kern-2pt {\cal L}^{-1 , 1  }}
\def\x1nu{x^\frac{1}{\nu}}
\def\xm1nu{x^{-\frac{1}{\nu}}}
\def\CR{ {\cal R}}
\def\CN{{\cal N}}
\def\CB{{\cal B}}
\def\CI{{\cal I}}
\def\CP{ {\cal P}}
\def\nAM0{{\nu }_{{\cal A},M_0}}
\def\nAN0{{\nu }_{{\cal A},N_0}}
\def\CR{ {\cal R }}
\def\CP{ {\cal P }}
\def\Gg{\mathfrak{g}}
\def\ga{\mathfrak{a}}
\def\gb{\mathfrak{b}}
\def\gm{\mathfrak{m}}
\def\gp{\mathfrak{p}}
\def\gq{\mathfrak{q}}
\def\gr{\mathfrak{r}}
\def\SL{{\rm SL}}
\def\Spec{{\rm Spec}}
\def\di!{\frac{\der^i}{i!}}
\def\dik!{\frac{\der^k_i}{k!}}
\def\Max{{\rm Max}}
\def\N{\mathbb{N}}
\def\0{\overline{0}}
\def\1{\overline{1}}
\def\Ln1{\L_{n,\overline{1}}}
\def\a1{a_{\overline{1}}}
\def\S{\Sigma}
\def\vn1{\overrightarrow{n-1}}
\def\CQ{{\cal Q}}
\def\Gr{{\rm gr}}
\def\sl{{\rm sl}}
\def\soc{{\rm soc}}
\def\mJ{\mathbb{J}}
\def\mI{\mathbb{I}}
\def\ann{{\rm ann}}
\def\K1{{\rm K}_1}
\def\mK{\mathbb{K}}
\def\Supp{{\rm Supp}}
\def\hmI1{\widehat{\mI_1}}
\def\tmI1{\widetilde{\mI_1}}
\def\tmJ1{\widetilde{\mJ_1}}
\def\hB1{\widehat{B_1}}
\def\hCB1{\widehat{\CB_1}}
\def\ga{\mathfrak{a}}
\def\tor{{\rm tor}}
\def \S{\mathcal{S}}
\def\sl2{{\mathfrak{sl}}_2}
\def\mK{\mathbb{K}}
\def\Prim{{\rm Prim}}
\newenvironment{proof*}[1][\proofname]{\par
  \pushQED{\qed}%
   \vspace{-\topsep}
    \pushQED{\qed}%
    \normalfont
    \topsep0pt \partopsep0pt 
  \normalfont \partopsep=\z@skip \topsep=\z@skip
  \trivlist
  \item[\hskip\labelsep
        \itshape
    #1\@addpunct{.}]\ignorespaces
}{%
  \popQED\endtrivlist\@endpefalse
}
\begin{document}

\author{V. V. \  Bavula and T. Lu 
 }
\title{The prime spectrum and simple modules over the quantum spatial ageing algebra}

\maketitle

\begin{abstract}
For the algebra $\CA$ in the title, its prime, primitive and maximal spectra are classified. The group of automorphisms of $\CA$ is determined. The simple unfaithful $\CA$-modules and the simple weight $\CA$-modules are classified.

$\noindent $

{\em Key Words:  Prime ideal, maximal ideal, primitive ideal,  simple module, torsion module,  algebra automorphism, quantum algebra.}

{\em Mathematics subject classification 2010: 17B10, 16D25, 16D60, 16D70, 16P50, 17B37, 17B40.}

\small{\tableofcontents}
\end{abstract}


\section{Introduction} \label{Introduction} 

Let $\mathbb{K}$ be a filed and an element $q \in \mK^*:=\mK\setminus \{0 \}$ which is not a root of unity. The algebra $\mK_q[X, Y]:= \mK \langle X, Y \,|\, XY=qYX \rangle$ is called the \textit{quantum plane}. A classification of simple modules over the quantum plane is given in \cite{Bav-SimpModQuanPlane}.  The \textit{quantized enveloping algebra} $U_q(\sl2)$ of $\sl2$ is generated over $\mK$ by elements $E, F, K$ and $K^{-1}$ subject to the defining relations:
\begin{align*}
KEK^{-1}=q^2E, \,\,  KFK^{-1}=q^{-2}F,\,\, EF-FE = \frac{K-K^{-1}}{q- q^{-1}}.
\end{align*}
For basic properties and representation theory of the algebra $U_q(\sl2)$ the reader is referred to \cite{Jantzen-QuantGroup,Kassel-QuamtumGp}. The simple $U_q(\sl2)$-modules were classified in \cite{Bav-SimGWA-1992}, see also \cite{Bav-THM-GLGWA-1996}, \cite{Bav-SimpModQuanPlane} and \cite{Bav-OystGeneCrossPro}.  The quantum plane and the quantized enveloping algebra $U_q(\sl2)$ are important examples of generalized Weyl algebras and ambiskew polynomial rings, see e.g., \cite{Bav-GWArep} and \cite{D.A.Jordan-SimpleAmbiskew}. Let $U_q^{\geqslant 0}(\sl2)$ be the `positive part' of $U_q(\sl2)$. It is the subalgebra of  $U_q(\sl2)$ generated by $K^{\pm 1}$ and $E$. There is a Hopf algebra structure on $U^{\geqslant 0}_q(\sl2)$ defined by
\begin{align*}
\D(K)&= K \otimes K,   & \varepsilon(K) &=1, & S(K)&= K^{-1}, \\
\D(E)&= E \otimes 1+ K \otimes E, & \varepsilon(E)&=0, & S(E) &= -K^{-1}E.
\end{align*}
The notion of smash product has proved to be very useful in studying Hopf algebra actions \cite{Montgomery}. For example, the enveloping algebra of a semi-direct product of Lie algebras can naturally be seen as a smash product algebra. The smash product is constructed from a module algebra, see \cite[4.1]{Montgomery} for details and examples. We can make the quantum plane a $U^{\geqslant 0}_q(\sl2)$-\emph{module algebra} by defining
\begin{align*}
K \cdot X = qX,\,\,  E \cdot X= 0, \,\, K \cdot Y =q^{-1}Y,  \,\, E\cdot Y =X,
\end{align*}
and introduce the smash product algebra $\CA:=\mK_q[X, Y]\rtimes U_q^{\geqslant 0}(\sl2)$. We call this algebra the \emph{quantum spatial ageing algebra}.  The defining relations for the algebra $\CA$ are given explicitly in the following definition.

\noindent \textbf{Definition.} The \emph{quantum spatial ageing algebra} $\CA= \mK_q[X, Y]\rtimes U_q^{\geqslant 0}(\sl2)$ is an algebra generated over $\mathbb{K}$ by the elements $E,\,  K,\,  K^{-1},\, X$ and $ Y$ subject to the defining relations:
\begin{align}
EK &= q^{-2} KE,  & XK &=q^{-1} KX,  \,& YK &= qKY, \label{Rel1S} 
\\
EX &= qXE,\, & EY &= X+ q^{-1}YE, & qYX &=XY. \label{Rel2S} 
\end{align}
 The algebra $\CA$ can be seen as the quantum analogue of the enveloping algebra $U(\ga)$ of the 4-dimensional (non-semisimple) Lie algebra $\ga$ with basis $\{ h, e, x, y \}$ and Lie brackets:
$$[h, e]=2e, \,\, [h, x]=x, \,\, [h, y]=-y, \,\,  [e, x]=0,  \,\, [e, y]=x,\,\, [x, y]=0.$$
The Lie algebra $\ga$ is called the \emph{1-spatial ageing algebra}, it is studied in \cite{LMZ-AgingAlg} where all the simple weight modules are classified. Our aim is to study the structure of the algebra $\CA$ and its representation theory. For an infinite dimensional non-commutative algebra, to classify its simple modules is a very difficult problem, in general. The known examples of algebras for which the simple modules were classified are mainly generalized Weyl algebras, see e.g. \cite{Bav-SimGWA-1992,Bav-GWArep,Bav-THM-GLGWA-1996,Bav-SimpModQuanPlane,Bav-OystGeneCrossPro,Bav-OystWit-Woron,Bav-SimpHolonWeylA2,Block-IrrRepsl2} and Ore extensions with Dedekind ring as coefficient ring, \cite{Bav-OystWit-Woron}. In the paper, we classify all simple unfaithful $\CA$-modules and the simple weight modules. Weight modules for certain (quantum) algebras of small Gelfand-Kirillov dimension are treated in \cite{Dobrev-LowestweightSchrodg,Dobrev-qSchrodinger,Dubsky-WeightSchrodg,Dubsky-Lv-Mazorchuk-Zhao-CatOSchrodinger,Wu-IndecompModSchrodg,Wu-Zhu-SimpweighModSchrodg}.

The paper is organized as follows. In Section \ref{PrimeSpec}, we describe the partially ordered sets of the prime, maximal and primitive ideals of the algebra $\CA$. Using this description the prime factor algebras of $\CA$ are given explicitly via generators and relations (Theorem \ref{A23Feb15}). There are nine types of prime factor algebras of $\CA$. For two of them, `additional' non-obvious units appear under factorization at prime ideals. It is proved that every prime ideal of $\CA$ is completely prime (Corollary \ref{a13Mar15}). In Section \ref{AutoGroup}, the automorphism group of $\CA$ is determined, which turns out to be a `small' non-commutative group that contains an infinite discrete subgroup (Theorem \ref{B27Feb15}). The orbits of the prime spectrum under the action of the automorphism group are described. In Section \ref{ClaANNz}, we classify all simple unfaithful $\CA$-modules. This is the first instance where such a large class of simple modules is classified for a quantum algebra of Gelfand-Kirillov dimension larger than 2. In Section \ref{SimWeight}, we find the centralizer $C_{\CA}(K)$ of the element $K$ in the algebra $\CA$ and give a classification of all simple weight $\CA$-modules.

The prime or/and primitive ideals of various quantum algebras (and their classification) are considered in \cite{Brown-Goodearl-LectQutumGp,Cauchon-SpecOqMn,Dumas-Rigal-SpecAutJordanMatrix,Good-Letzter-PrimFactQuanMatr,Good-Letzter-PrimIdealqSkew,Good-Lenagan-CatenarityQuanAlg,Good-Letzter-PrimMulQaffin,Launois-Uq(B2),Lopes-PrimUqsln,Malliavin-QuanHeisenberg,Rigal--SpecWeyl}. The automorphism group of some (quantum) algebras are considered in \cite{Alev-Chamarie-DerAut-92,Andrumskiewitsch-Duams-AutUqg,Bav-Jordan-IsoGWA,Fleury-AutUqb,Joseph-WildAutUsl2,Launois-Lopes-AutUqsl4,Launois-Lenagan-PrimQuanMatr,Shestakov-Umirbaev-AutPolythree}.
\section{Prime spectrum of the algebra $\CA$} \label{PrimeSpec} 
The aim of this section is to describe the prime, maximal and primitive spectrum of the algebra $\CA$ (Theorem \ref{A23Feb15}, Corollary \ref{a24Feb15} and Proposition \ref{a27Feb15}). Every prime ideal of $\CA$ is completely prime (Corollary \ref{a13Mar15}). For all prime ideals $P$ of $\CA$, the factor algebras $\CA/P$ are given by generators and defining relations (Theorem \ref{A23Feb15}).

{\bf Generalized Weyl algebra}. {\it Definition}, \cite{Bav-GWA-FA-91, Bav-UkrMathJ-92, Bav-GWArep}. Let $D$ be a ring, $\s$ be an automorphism of $D$ and $a$ is an element of the centre of $D$. {\em The generalized Weyl algebra}
$A:=D(\sigma, a):=D[X,Y; \s , a]$ is a ring  generated by $D$,
$X$ and $Y$ subject to the defining relations:
$$
X\alpha=\sigma(\alpha)X \;\; {\rm and}\;\;  Y\alpha=\sigma^{-1}(\alpha)Y\;\;  {\rm for \; all}\;\;
\alpha \in D, \;\;  \ YX=a \;\;  {\rm and}\;\; XY=\sigma(a).
$$
The algebra $A={\oplus}_{n\in {\bf \Z}}\, A_n$
is $\Z$-graded where $A_n=Dv_n$,
$v_n=X^n$ for $n>0$, $v_n=Y^{-n}$ for $n<0$ and $v_0=1.$
 It follows from the above relations that
$v_nv_m=(n,m)v_{n+m}=v_{n+m}\langle n,m\rangle $
for some $(n,m)$ and $\langle n, m \rangle \in D$. If $n>0$ and $m>0$ then
\begin{eqnarray*}
 n\geq m:& &  (n,-m)=\sigma^n(a)\cdots \sigma^{n-m+1}(a),\;\;  (-n,m)=\sigma^{-n+1}(a)
\cdots \sigma^{-n+m}(a),\\
n\leq m: & & (n,-m)=\sigma^{n}(a)\cdots \sigma(a),\,\,\,\;\;\; \;\;\; \;\;\; (-n,m)=\sigma^{-n+1}(a)\cdots a,
\end{eqnarray*}
in other cases $(n,m)=1$. Clearly, $\langle n,m\rangle =\s^{-n-m}((n,m))$.

{\it Definition}, \cite{Bav-GlGWA-1996}. Let $D$ be an ring and $\s$ be its automorphism.  Suppose that elements $b$ and $\rho $
belong to the centre of the ring $D$,  $\rho $ is invertible and
 $\s (\rho )=\rho$.  Then   $E:=D \langle \s ;b,\rho\rangle:= D[ X, Y; \s ,b,\rho ]$ is a ring generated by  $D$, $X$ and $Y$ subject to the defining
relations:
$$X\alpha =\s (\alpha )X\;\; {\rm and} \;\; Y\alpha =\s {}^{-1}(\alpha )Y \;\; {\rm for \; all}\;\;   \alpha \in D, \;\; {\rm and} \;\;\, XY-\rho
YX=b.$$

If $D$ is commutative domain, $\rho =1$ and $b = u-\s (u)$ for some $u\in D$ (resp., if $D$ is a commutative finitely generated domain over a field $\mK$ and  $\rho \in \mK^*$) the algebras $E$ were considered in \cite{Jordan-ItSkewPol-1993} (resp., \cite{Jordan-Wells-1996}).

The ring $E$ is {\em the iterated skew polynomial ring}
$E=D[Y;\sm1  ][X;\s ,\der ]$ where $\der $ is the $\s$-derivation of
$D[Y;\sm1 ]$ such that $\der D=0$ and $\der Y=b$ (here the automorphism $\s $ is extended
from $D$ to $D[Y;\sm1 ]$ by the rule $\s (Y)=\rho Y)$.

An element $d$ of a ring $D$ is {\em normal} if $dD=Dd$.
 The next proposition shows that the rings $E$ are GWAs and under a certain (mild) conditions they have a `canonical' normal element.

\begin{proposition}\label{A28Mar15}
Let $E=D[X, Y;\s , b, \rho ]$. Then
\begin{enumerate}
\item \cite[Lemma 1.2]{Bav-GlGWA-1996}   The ring $E$ is the GWA $D[H][X,Y; \s , H]$ where $\s (H) = \rho H+b$.
    \item \cite[Lemma 1.3]{Bav-GlGWA-1996}  The following statements are equivalent:
\begin{enumerate}
\item \cite[Corollary 1.4]{Bav-GlGWA-1996} $C=\rho(YX+\alpha )=XY+\s (\alpha )$ is a normal element  in $E$ for some central  element $\alpha \in D$,
\item $\rho\alpha -\s (\alpha )=b$ for some central element $\alpha \in D$.
\end{enumerate}
\item \cite[Corollary 1.4]{Bav-GlGWA-1996}    If one of the equivalent conditions of statement 2 holds then the ring $E=D[C][X,Y; \s, a=\rho^{-1}C-\alpha ]$ is a GWA where $\s (C)=\rho  C$.
\end{enumerate}
\end{proposition}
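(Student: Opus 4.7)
\emph{Plan.} All three parts boil down to verifying that $E$, originally presented as an iterated Ore extension $D[Y;\sigma^{-1}][X;\sigma,\partial]$, admits a GWA presentation for a suitable choice of central scalar element. My overall strategy is to prove (1) directly, which fixes the GWA language for the rest, and then obtain (2) by an elementary expansion and (3) by a change of variable inside the GWA of (1).

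\emph{Part 1.} Set $H:=YX$. The key algebraic facts to check are (i) $H$ commutes with $D$, which follows from $YXd=Y\sigma(d)X=\sigma^{-1}(\sigma(d))YX=dYX$ for $d\in D$; (ii) $XY=\rho YX+b=\rho H+b$, which is just the defining relation of $E$; and (iii) extending $\sigma$ to $D[H]$ by $\sigma(H):=\rho H+b$ is consistent with the $D$-action ($\rho,b$ central in $D$). With these observations in place, I would define a surjective map $\varphi \colon D[H][X,Y;\sigma,H] \to E$ sending the abstract $D,X,Y,H$ to the corresponding elements of $E$; it preserves all GWA relations (the nontrivial ones being $Xr=\sigma(r)X$, $Yr=\sigma^{-1}(r)Y$ on $r\in D[H]$, which reduce to the known relations on $D$ plus $XH = (\rho H+b)X = XYX$, automatic). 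Injectivity is then seen from the Ore extension's free $D$-basis $\{Y^jX^i\}$ matching the GWA's $D[H]$-graded decomposition $\bigoplus_n D[H]v_n$.

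\emph{Part 2.} Using $XY=\rho YX+b$, the equality $\rho(YX+\alpha)=XY+\sigma(\alpha)$ is equivalent, after cancellation, to $\rho\alpha-\sigma(\alpha)=b$. This gives (a)$\Leftrightarrow$(b) at the level of the polynomial identity. It remains to show that once (b) holds, the common value $C=\rho(H+\alpha)$ is automatically normal in $E$. In the GWA presentation of Part 1, $C$ lies in $D[H]$ and $H$ commutes with $D$, so $C$ commutes with $D$ precisely because $\alpha$ does. A direct computation gives $\sigma(C)=\rho\sigma(H)+\sigma(\rho\alpha)=\rho(\rho H+b)+\rho\sigma(\alpha)=\rho^{2}H+\rho(b+\sigma(\alpha))=\rho^{2}H+\rho^{2}\alpha=\rho C$, where I used (b) in the penultimate step. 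Hence $XC=\sigma(C)X=\rho CX$ and $YC=\sigma^{-1}(C)Y=\rho^{-1}CY$, so $C$ is normal.

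\emph{Part 3.} Starting from the GWA $E=D[H][X,Y;\sigma,H]$ of Part 1, substitute $H=\rho^{-1}C-\alpha$. Since $C=\rho(H+\alpha)$ is a degree-one polynomial in $H$ with unit leading coefficient, $D[C]=D[H]$ as subalgebras of $E$ and $C$ is algebraically independent over $D$. The relation $\sigma(C)=\rho C$ from Part 2 shows that $\sigma$ restricts to an automorphism of $D[C]$ in the required form. The GWA relations $YX=H$ and $XY=\sigma(H)$ translate into $YX=\rho^{-1}C-\alpha=a$ and $XY=\sigma(a)$, which is precisely the assertion $E=D[C][X,Y;\sigma,a]$.

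\emph{Anticipated obstacle.} None of the computations is deep; the only care needed is the verification in Part 1 that $H=YX$ is genuinely algebraically independent over $D$, so that the GWA map $\varphi$ is injective rather than merely surjective. This comparison of $D$-bases (the Ore basis versus the GWA grading) is the one step where the iterated skew polynomial structure of $E$ is essential, and I would carry it out explicitly before treating Parts 2 and 3 as formal consequences.
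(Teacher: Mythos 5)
The paper does not actually prove Proposition~\ref{A28Mar15}: every part is cited directly from \cite{Bav-GlGWA-1996} (Lemma~1.2, Lemma~1.3, Corollary~1.4), so there is no in-paper proof to compare against. Evaluated on its own, your argument is correct and is the natural one. Part~1 sets $H=YX$, checks centrality over $D$ and the relation $XY=\rho H+b$, and exhibits the GWA by a map from the abstract $D[H][X,Y;\sigma,H]$; in Part~2 the identity $\rho(YX+\alpha)=XY+\sigma(\alpha)$ reduces to $\rho\alpha-\sigma(\alpha)=b$ by cancelling $XY=\rho YX+b$, and normality of $C$ then follows from $\sigma(C)=\rho C$ together with centrality of $C$ over $D$; Part~3 is a change of polynomial generator $D[H]=D[C]$ inside the GWA of Part~1. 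All the computations you give ($\sigma(C)=\rho^{2}H+\rho(b+\sigma(\alpha))=\rho^{2}(H+\alpha)=\rho C$, and $YX=\rho^{-1}C-\alpha=a$, $XY=\sigma(a)$) are right.

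Two small remarks on completeness. First, in Part~1 you verify $XH=\sigma(H)X$ but not the companion relation $YH=\sigma^{-1}(H)Y$; it does hold (from $H\cdot Y = Y(XY)=Y(\rho H + b)=\rho\,YH+bY$, so $YH=\rho^{-1}(H-b)Y=\sigma^{-1}(H)Y$), and since $H$ commutes with $D$ these two checks on the generator $H$ suffice to get $Xr=\sigma(r)X$, $Yr=\sigma^{-1}(r)Y$ for all $r\in D[H]$. Second, the injectivity step you flag as the delicate point is indeed the only place requiring care: the clean way is to use the $\Z$-grading with $\deg X=1$, $\deg Y=-1$ and observe that, in degree $n\ge 0$, the elements $H^{k}X^{n}=(YX)^{k}X^{n}$ for $k\ge 0$ are upper-triangular (with unit leading coefficients coming from $XY=\rho YX+b$, $\rho\in D^{*}$) with respect to the Ore basis $\{Y^{k}X^{k+n}\}_{k\ge 0}$, and symmetrically for negative degree; so they form a free $D$-basis of the graded piece. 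This fills the gap you anticipated.
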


\textbf{The algebra $\mathbb{E}$ is a GWA.} Let $\mathbb{E}$ be the subalgebra of $\CA$ generated by the elements $X, E$ and $Y$. The generators of the algebra $\mathbb{E}$ satisfy the defining relations
\begin{align*}
EX= qXE, \quad YX = q^{-1}XY \quad {\rm and}\,\, EY- q^{-1}YE = X.
\end{align*}
So, $\mathbb{E}= \mK[X] [ E, Y; \sigma, b= X, \rho = q^{-1} ]$ where $\sigma(X)= qX$. The polynomial $\alpha = \frac{X}{q^{-1}-q}$ is a solution to the equation $q^{-1}\alpha - \sigma(\alpha)= X$. By Proposition \ref{A28Mar15}, the algebra $\mathbb{E}= \mK[X, C][E, Y; \sigma, a= qC- \alpha]$ is a GWA where $\sigma(X)=qX, \sigma(C)= q^{-1}C$ and $C= q^{-1}(YE + \frac{X}{q^{-1}-q})= EY + \frac{qX}{q^{-1}-q}$ is a normal element of the algebra $\mathbb{E}$. Then the element $\varphi = q(q^{-1}-q)C$ is a normal element of the algebra $\mathbb{E}$. Clearly, $\varphi = (q^{-1}-q)YE+X = (1-q^2)EY + q^2 X$. Then
\begin{align}
\mathbb{E}= \mK[X, \varphi][E, Y; \sigma, a = \frac{\varphi -X}{q^{-1}-q}] \label{EEX} 
\end{align}
is a GWA where $\sigma(X)=qX$ and $\sigma(\varphi)= q^{-1} \varphi$. So, the algebra
\begin{align}
\CA= \mathbb{E}[K^{\pm 1}; \tau] \label{EEX1} 
\end{align}
is a skew Laurent polynomial algebra where $\tau(E)= q^2 E, \tau(X)=qX, \tau(Y)=q^{-1}Y$ and $\tau(\varphi)= q \varphi$. The algebra $\CA$ is a Noetherian domain of Gelfand-Kirillov dimension $\GK(\CA)=4.$

Let $x_1, x_2, x_3$ be a permutation of the elements $X, Y, E$. Then $\{ K^i x^{\alpha}\,|\, i \in \Z, \alpha \in \N^3 \}$ is a $\mK$-basis for the algebra $\CA$ where $x^{\alpha}= x_1^{\alpha_1} x_2^{\alpha_2} x_3^{\alpha_3}$ and $\alpha = ( \alpha_1, \alpha_2, \alpha_3 )$. The standard filtration associated with the canonical generators of the algebra $\CA$ is equal to $\{ F_n \}_{n \geqslant 0}$ where $F_n = \sum_{|i|+|\alpha| \leqslant n} \mK K^i x^{\alpha}$ where $|\alpha| = \alpha_1 + \alpha_2 + \alpha_3.$ Therefore, the Gelfand-Kirillov dimension of the algebra $\CA$ is equal to $\GK(\CA)= 4.$

For a left denominator set $\CS$ of a ring $R$, we denote by $\CS^{-1}R= \{ s^{-1}r \,|\, s \in \CS, r \in R \}$ the left localization of the ring $R$ at $\CS$. If the left denominator set $\CS$ is generated by elements $X_1, \ldots, X_n$, we also use the notation $R_{X_1, \ldots, X_n}$ to denote the ring $\CS^{-1}R$. If $M$ is a left $R$-module then the localization $\CS^{-1}M$ is also denoted by $M_{X_1, \ldots, X_n}$.

\begin{lemma}\label{c23Feb15} 
The following identities hold in the algebra $\CA$.
\begin{enumerate}
\item $EY^i = \frac{q^{-2i}-1}{q^{-2}-1}XY^{i-1}+ q^{-i}Y^i E.$
\item $YE^i = q^i E^iY - \frac{q(1-q^{2i})}{1-q^2}XE^{i-1}.$
\end{enumerate}
\end{lemma}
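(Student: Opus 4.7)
The plan is to prove both identities by induction on $i \geq 1$, using only the defining relations (\ref{Rel1S})--(\ref{Rel2S}) of $\CA$. No serious obstacle is expected; the only work is bookkeeping of $q$-constants.

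For identity (1), the base case $i=1$ is exactly the defining relation $EY = X + q^{-1}YE$, which equals $\frac{q^{-2}-1}{q^{-2}-1}XY^{0} + q^{-1}Y^{1}E$. For the inductive step, I compute
\[
EY^{i+1} \;=\; (EY^i)\,Y \;=\; \frac{q^{-2i}-1}{q^{-2}-1}XY^{i} + q^{-i}Y^i(EY)
\;=\; \frac{q^{-2i}-1}{q^{-2}-1}XY^{i} + q^{-i}Y^i X + q^{-i-1}Y^{i+1}E,
\]
and push $X$ through $Y^i$ using the consequence $Y^i X = q^{-i}XY^i$ of $qYX=XY$. Collecting the $XY^i$ coefficient reduces the inductive step to the numerical identity
\[
\frac{q^{-2i}-1}{q^{-2}-1} + q^{-2i} \;=\; \frac{q^{-2(i+1)}-1}{q^{-2}-1},
\]
which is immediate by clearing denominators.

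For identity (2), I first rewrite the relation $EY - q^{-1}YE = X$ as $YE = qEY - qX$; this is the case $i=1$ since $\frac{q(1-q^2)}{1-q^2} = q$. For the inductive step I compute
\[
YE^{i+1} \;=\; (YE^i)\,E \;=\; q^i E^i (YE) - \frac{q(1-q^{2i})}{1-q^2}XE^{i}
\;=\; q^{i+1} E^{i+1} Y - q^{i+1}E^i X - \frac{q(1-q^{2i})}{1-q^2}XE^{i},
\]
and push $X$ through $E^i$ using $E^i X = q^{i} X E^{i}$, a consequence of $EX = qXE$. The coefficient of $XE^i$ then reduces the step to
\[
q^{2i+1} + \frac{q(1-q^{2i})}{1-q^2} \;=\; \frac{q(1-q^{2(i+1)})}{1-q^2},
\]
which follows by combining over the common denominator $1-q^2$ and simplifying.

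The only conceivable subtlety is that $K$ and $K^{-1}$ never enter the computation (the variables $X,Y,E$ satisfy a closed set of relations inside the subalgebra $\mathbb{E}$ of (\ref{EEX})), so the whole argument takes place inside the GWA $\mathbb{E}$ and no further structural information is needed. The main obstacle is purely clerical: tracking the signs of exponents of $q$ in the two slightly different normal forms of the relation $EY - q^{-1}YE = X$.
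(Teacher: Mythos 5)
Your proof is correct and takes exactly the approach the paper indicates: induction on $i$ using the relation $EY = X + q^{-1}YE$ (together with $qYX = XY$ and $EX = qXE$ to commute $X$ past powers of $Y$ and $E$). The paper leaves the verification to the reader; your bookkeeping of the $q$-coefficients checks out in both inductive steps.
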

\begin{proof}
Both equalities can be proved by induction on $i$ and using the relation $EY= X+ q^{-1}YE.$
\end{proof}

By Lemma \ref{c23Feb15}, the set $\CS_Y := \{ Y^i \,|\, i \geqslant 0 \}$ is a left and right Ore set in the algebra $\CA.$ Note that the algebra $U_q^{\geqslant 0}(\sl2)$ is the localization of its subalgebra, the quantum plane $\mK \langle K, E \,| KE=q^2 EK \rangle$, at the powers of the element $K$. Let $\CA_Y$ be the localization of $\CA$ at the powers of $Y$. Recall that  $\varphi= EY-qYE = X+(q^{-1}- q)YE$, we have
 \begin{align}
 X\varphi&=\varphi X,  & Y\varphi &= q\varphi Y, & E\varphi &= q^{-1}\varphi E, & K\varphi &= q\varphi K.&  \label{XqYE} 
\end{align}
So, the element $\varphi$ is a normal element of the algebra $\CA$. Recall that an element $x$ of an algebra $A$ is called a \emph{normal element} if $xA=Ax$. Then
\begin{align}
\CA_Y= \mK \langle K^{\pm 1}, E, Y^{\pm 1}, X \rangle = \mK \langle K^{\pm 1}, \varphi, Y^{\pm 1}, X\rangle = \mK[\varphi, X][Y^{\pm 1}; \sigma][K^{\pm 1}; \tau] \label{AYita} 
\end{align}
is an iterated skew polynomial ring where $\sigma$ is the automorphism of $\mK[\varphi, X]$ defined by $\sigma(\varphi)= q\varphi$, $\sigma(X)= q^{-1}X$; and $\tau$ is the automorphism of the algebra $\mK[\varphi, X][Y^{\pm 1};\sigma]$ defined by $\tau(\varphi)= q\varphi, \tau(X)= qX, \tau(Y)= q^{-1}Y$. Let $\CA_{Y, X, \varphi}$ be the localization of $\CA_Y$ at the denominator set $\{X^i\varphi^j \,|\, i, j \in \N \}$, then $\CA_{Y, X, \varphi}= \mK[\varphi^{\pm 1}, X^{\pm 1}][Y^{\pm 1}; \sigma][K^{\pm 1}; \tau]$ is a quantum torus.
For an algebra $A$ we denote by $Z(A)$ its centre. The next result shows that the algebra $\CA$ and some of its localizations have trivial centre.
\begin{lemma}\label{a22Feb15} 
\begin{enumerate}
\item $Z(\CA_{Y, X, \varphi})=\mK.$
\item $\CA_{Y, X, \varphi}$ is a simple algebra.
\item $Z(\CA_Y)=\mK.$
\item $Z(\CA)=\mK.$
\end{enumerate}
\end{lemma}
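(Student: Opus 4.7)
The strategy is to prove (1) first, derive (2) as a standard consequence for quantum tori with trivial center, and obtain (3) and (4) by observing that $\CA$ and $\CA_Y$ embed in $T := \CA_{Y,X,\varphi}$.

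For (1), I would use the quantum-torus presentation $T = \mK[\varphi^{\pm 1}, X^{\pm 1}][Y^{\pm 1};\sigma][K^{\pm 1};\tau]$ in which the monomials $X^a\varphi^b Y^c K^d$ with $(a,b,c,d)\in\Z^4$ form a $\mK$-basis, and conjugation by each monomial unit acts diagonally on this basis. Hence the center is spanned by the central monomials. Collecting the commutation rules from (\ref{XqYE}) and (\ref{Rel1S})--(\ref{Rel2S}), namely $X\varphi=\varphi X$, $YX=q^{-1}XY$, $Y\varphi=q\varphi Y$, $KX=qXK$, $KY=q^{-1}YK$, $K\varphi=q\varphi K$, the condition that $X^a\varphi^b Y^c K^d$ commute with each of $X,\varphi,Y,K$ becomes a small homogeneous $\Z$-linear system in $(a,b,c,d)$; since $q$ is not a root of unity, the only solution is $a=b=c=d=0$, so $Z(T)=\mK$.

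For (2), I would apply the standard minimal-support argument for quantum tori. Given a nonzero two-sided ideal $I\subseteq T$, pick $a\in I$ with the smallest number of monomial terms. For any monomial unit $u\in T$, $uau^{-1}\in I$ has the same support as $a$ but with each coefficient multiplied by a character of $\Z^4$; subtracting an appropriate scalar multiple of $uau^{-1}$ from $a$ produces an element of $I$ with strictly fewer terms unless all these characters agree on the support of $a$. The latter forces the pairwise differences of the exponent vectors appearing in $a$ to be central, which by (1) means they are zero. Hence $a$ is a single monomial and therefore invertible, giving $I=T$.

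For (3) and (4), the inclusions $\CA\hookrightarrow \CA_Y\hookrightarrow T$ are localizations (as already recorded in the excerpt). Any $z\in Z(\CA)$ commutes with $Y$, $X$, $\varphi$ in $\CA$, so multiplying $zY=Yz$ on both sides by $Y^{-1}$ gives $zY^{-1}=Y^{-1}z$ in $T$, and similarly for $X$ and $\varphi$. Thus $z$ commutes with every element of $T$, so $z\in Z(T)=\mK$; this yields $Z(\CA)=\mK$, and the same reasoning gives $Z(\CA_Y)=\mK$. The only substantive step is the centrality calculation in (1); the remaining parts are essentially formal consequences of it, and the main obstacle amounts to carefully bookkeeping the four-variable $q$-commutation system.
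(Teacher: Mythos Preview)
Your proof is correct and, for parts (1), (3), and (4), essentially identical to the paper's argument: the paper also reduces the center of the quantum torus to a homogeneous linear system in the exponents (using exactly the four commutation tests against $K,X,Y,\varphi$), and then obtains (3) and (4) by the inclusions $Z(\CA)\subseteq Z(\CA_Y)\subseteq Z(\CA_{Y,X,\varphi})\cap\CA_Y=\mK$, which is precisely your ``central in the subring implies central in the localization'' observation.

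The one genuine difference is in (2). You give a self-contained minimal-support argument showing directly that a quantum torus with trivial center is simple, whereas the paper simply invokes the Goodearl--Letzter result that the ideal lattice of a quantum torus is in bijection with that of its center. Your route is more elementary and keeps the proof internal to the paper; the paper's route is shorter but relies on an external reference. Both are standard and yield the same conclusion.
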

\begin{proof}
1. Let $u= \sum \alpha_{i,j,k,l} \varphi^i X^j Y^k K^l \in Z(\CA_{Y, X, \varphi})$, where $\alpha_{i,j,k,l} \in \mK$ and  $i,j, k, l \in \Z$. Since $Ku= uK$, we have $i+j-k=0$. The equality $Xu= uX$ implies that $k-l=0$. Similarly, the equality  $Yu= uY$ implies that $i-j+l =0$. Finally, using $\varphi u= u\varphi$ we get $-k-l=0$. Therefore, we have $i=j=k=l=0$, and so $u \in \mK$. Thus $Z(\CA_{Y, X, \varphi})=\mK.$

2. By \cite[Corollary 1.5.(a)]{Good-Letzter-PrimeQuanSpac}, contraction and extension provide mutually inverse isomorphisms between the lattices of ideals of a quantum torus and its centre. Then statement 2 follows from statement 1.

3. Since $\mK \subseteq Z(\CA_Y) \subseteq Z(\CA_{Y, X, \varphi})\cap \CA_Y =\mK$, we have $Z(\CA_Y)=\mK.$

4. Since $\mK \subseteq Z(\CA) \subseteq Z(\CA_Y) \cap \CA = \mK$, we have $Z(\CA)= \mK$.
\end{proof}

\begin{lemma} \label{b24May15} 
The algebra $\CA_{X, \varphi}$ is a central simple algebra.
\end{lemma}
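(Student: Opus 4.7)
The plan is to deduce both the centrality and the simplicity of $\CA_{X,\varphi}$ from the corresponding facts about $\CA_{Y,X,\varphi}$ established in Lemma \ref{a22Feb15}, bridging via the localization $(\CA_{X,\varphi})_{\CS_Y} = \CA_{Y,X,\varphi}$ at the Ore set $\CS_Y = \{Y^n \mid n\geq 0\}$. First, I would verify that $\CS_Y$ is indeed Ore in $\CA_{X,\varphi}$: it is Ore in $\CA$ by the remark following Lemma \ref{c23Feb15}, and since $X$ and $\varphi$ are normal in $\CA$ by the commutation rules (\ref{XqYE}), the Ore condition persists after inverting $X$ and $\varphi$. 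The identification $(\CA_{X,\varphi})_{\CS_Y} = \CA_{Y,X,\varphi}$ is then immediate from the universal property of localization.

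For centrality, suppose $z \in Z(\CA_{X,\varphi})$. Viewing $z$ inside $\CA_{Y,X,\varphi}$, it commutes with $Y \in \CA_{X,\varphi}$, hence with $Y^{-1}$, and therefore with every element of $\CA_{Y,X,\varphi}$. Thus $Z(\CA_{X,\varphi}) \subseteq Z(\CA_{Y,X,\varphi}) \cap \CA_{X,\varphi} = \mK$ by Lemma \ref{a22Feb15}(1); the reverse inclusion is obvious.

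For simplicity, let $I$ be a nonzero two-sided ideal of $\CA_{X,\varphi}$. Since $\CA$ is a domain, the extension $I\CA_{Y,X,\varphi}$ is nonzero; by Lemma \ref{a22Feb15}(2) it coincides with all of $\CA_{Y,X,\varphi}$, so we can write $1 = Y^{-m} a$ for some $m\geq 0$ and $a\in I$, which gives $Y^m \in I$. To descend to $1 \in I$, I invoke Lemma \ref{c23Feb15}(1) in the rearranged form
\begin{equation*}
E Y^m - q^{-m} Y^m E \;=\; \frac{q^{-2m}-1}{q^{-2}-1}\, X Y^{m-1}.
\end{equation*}
The left-hand side lies in $I$; the coefficient $(q^{-2m}-1)/(q^{-2}-1)$ is nonzero because $q$ is not a root of unity; and $X$ is a unit in $\CA_{X,\varphi}$. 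Hence $Y^{m-1} \in I$, and iterating on $m$ yields $1\in I$, so $I = \CA_{X,\varphi}$.

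The argument is essentially routine given Lemma \ref{a22Feb15}; the two points I would be careful about are the preservation of the Ore property of $\CS_Y$ after localizing at the normal elements $X,\varphi$, and the fact that all the $q$-integers appearing in the inductive descent on the exponent of $Y$ are nonzero — a direct consequence of the standing assumption that $q$ is not a root of unity.
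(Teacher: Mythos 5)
Your argument is correct and coincides with the paper's own proof: both deduce centrality and simplicity of $\CA_{X,\varphi}$ from Lemma \ref{a22Feb15} via the further localization $(\CA_{X,\varphi})_Y = \CA_{Y,X,\varphi}$, and both descend from $Y^m \in I$ to $1 \in I$ by the same inductive use of the identity $EY^m - q^{-m}Y^m E = \frac{q^{-2m}-1}{q^{-2}-1}\, X Y^{m-1}$ from Lemma \ref{c23Feb15}(1), using that $X$ is a unit and $q$ is not a root of unity. The only cosmetic difference is that the paper phrases the induction as proving $(Y^i)=\CA_{X,\varphi}$ rather than iterating inside a fixed ideal $I$.
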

\begin{proof}
By Lemma \ref{a22Feb15}.(1), the algebra $\CA_{Y, X, \varphi}$ is central, hence so is the algebra $\CA_{X, \varphi}$. By Lemma \ref{a22Feb15}.(2), the algebra $(\CA_{X, \varphi})_Y= \CA_{Y, X, \varphi}$ is a simple Noetherian domain. So, if $I$ is a nonzero ideal of the algebra $\CA_{X, \varphi}$ then $I_Y$ is a nonzero ideal of the algebra $\CA_{Y, X, \varphi}$, i.e., $I_Y= \CA_{Y, X, \varphi}$, and so $Y^i \in I$ for some $i \geqslant 0$. To finish the proof it suffices to show that
\begin{align}
(Y^i) = \CA_{X, \varphi} \quad {\rm for \,\, all\,\,} i \geqslant 1. \label{YiAA} 
\end{align}
To prove the equality we use induction on $i$. Let $i=1$. Then $X=EY - q^{-1}YE \in (Y)$. Since $X$ is a unit of the algebra $\CA_{X, \varphi}$, the equality (\ref{YiAA}) holds for $i =1$. Suppose that $i \geqslant 1$ and equality (\ref{YiAA}) holds for all $i'$ such that $i' < i$. By Lemma \ref{c23Feb15}.(1), $Y^{i-1} \in (Y^i)$. Hence, $(Y^i)= (Y^{i-1})= \CA_{X, \varphi}$, by induction.
\end{proof}

By (\ref{EEX}), the localization $\mathbb{E}_Y$ of the algebra $\mathbb{E}$ at the powers of the element $Y$ is the skew Laurent polynomial algebra $\mathbb{E}_Y = \mK[X, \varphi][Y^{\pm 1}; \sigma^{-1}]$ where $\sigma(X)=qX$ and $\sigma(\varphi)= q^{-1} \varphi$. Similarly, $\mathbb{E}_E = \mK[X, \varphi][E^{\pm 1}; \sigma] \simeq \mathbb{E}_Y$ where $\sigma(X)=qX$ and $\sigma(\varphi)= q^{-1} \varphi$. So, $\CA_Y= \mathbb{E}_Y[K^{\pm 1}; \tau]$. The element $\varphi$ is a normal element of the aglebras $\mathbb{E}$ and $\CA$. So, the localizations of the algebras $\mathbb{E}_Y$ and $\CA_Y$ at the powers of $\varphi$ are as follows
\begin{align}
\mathbb{E}_{Y, \varphi}= \mK[X, \varphi^{\pm 1}][Y^{\pm 1}; \sigma^{-1}] \,\,{\rm and}\,\, \CA_{Y, \varphi}= \mathbb{E}_{Y, \varphi}[K^{\pm 1}; \tau]. \label{EEX5} 
\end{align}

Now, we introduce several factor algebras and localizations of $\CA$ that play a key role in finding the prime spectrum of the algebra $\CA$ (Theorem \ref{A23Feb15}) and all the prime factor algebras of $\CA$ (Theorem \ref{A23Feb15}). In fact, explicit sets of generators and defining relations are found for all prime factor algebras of $\CA$ (Theorem \ref{A23Feb15}). Furthermore, all these algebras are domains, i.e., all prime ideals of  $\CA$ are completely prime (Corollary \ref{a13Mar15}). For an element $a$ of an algebra $A$, we denote by $(a)$ the ideal it generates.

\textbf{The algebra $\CA/(X)$.} The element $X$ is a normal element in the algebras $\mathbb{E}$ and $\CA$. By (\ref{EEX}), the factor algebra
\begin{align}
\mathbb{E}/(X) = \mK[\varphi][E,Y; \sigma, a= \frac{\varphi}{q^{-1}-q}], \quad \sigma(\varphi)= q^{-1}\varphi, \label{EEX2} 
\end{align}
is a GWA. Since $YE = \frac{\varphi}{q^{-1}-q}, EY = q^{-1} \frac{\varphi}{q^{-1}-q}= q^{-1}YE$, the algebra
\begin{align}
\mathbb{E}/(X) \simeq \mK \langle E, Y \,|\, EY = q^{-1}YE \rangle \label{EEX3} 
\end{align}
is isomorphic to the quantum plane. It is a Noetherian domain of Gelfand-Kirillov dimension 2. Now, the factor algebra
\begin{align}
\CA/(X) \simeq \mathbb{E}/(X)[K^{\pm 1}; \tau] \label{EEX4} 
\end{align}
is a skew Laurent polynomial algebra where $\tau(E)= q^2 E$ and $\tau(Y)= q^{-1}Y.$ It is a Noetherian domain of Gelfand-Kirillov dimension 3.
The element of the algebra $\CA/(X)$, $Z:= \varphi YK^{-1} = (1-q^2)EY^2K^{-1}$, belongs to the centre of the algebra $\CA/(X)$. By (\ref{EEX5}), the localization of the algebra $\CA/(X)$ at the powers of the central element $Z$,
\begin{align}
\Big(\CA/(X)\Big)_Z \simeq \frac{\CA_{Y,\varphi}}{(X)_{Y, \varphi}} \simeq \mK[Z^{\pm 1}] \otimes \mathbb{Y}, \label{EEX6} 
\end{align}
is the tensor product of algebras where the algebra $\mathbb{Y}:= \mK[Y^{\pm 1}][K^{\pm 1}; \tau]$ is a central simple algebra since $\tau(Y)=q^{-1}Y$ and $q$ is not a root of unity. Hence, the centre of the algebra $(\CA/(X))_Z$ is $\mK[Z^{\pm 1}]$. The algebra $(\CA/(X))_Z$ is a Noetherian domain of Gelfand-Kirillov dimension 3.

\begin{lemma} \label{a23May15} 
\begin{enumerate}
\item $Z\Big(\CA/(X)\Big)= \mK[Z]$.
\item $Z\bigg(\Big(\CA/(X)\Big)_Z\bigg)= \mK[Z^{\pm 1}]$.
\end{enumerate}
\end{lemma}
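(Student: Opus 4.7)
The plan is to establish part (2) first from the tensor-product decomposition (\ref{EEX6}), then deduce part (1) by a PBW-basis computation in $\CA/(X)$.

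For part (2), I would use the isomorphism $(\CA/(X))_Z \simeq \mK[Z^{\pm 1}] \otimes \mathbb{Y}$ from (\ref{EEX6}). The second factor $\mathbb{Y} = \mK[Y^{\pm 1}][K^{\pm 1};\tau]$ is a quantum torus with $KY = q^{-1}YK$, and since $q$ is not a root of unity it is central simple over $\mK$: a central element, expanded in the $\mK$-basis $\{Y^jK^k\}$ and forced to commute with $Y$ and $K$, must be a scalar. Hence, by the standard formula for the centre of a tensor product one of whose factors is central simple over a field, $Z\bigl((\CA/(X))_Z\bigr) = \mK[Z^{\pm 1}] \otimes Z(\mathbb{Y}) = \mK[Z^{\pm 1}]$.

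For part (1), the inclusion $\mK[Z] \subseteq Z(\CA/(X))$ is immediate since $Z$ is central by construction. For the reverse inclusion, I would exploit the PBW-type $\mK$-basis $\{E^aY^bK^c \mid a, b \in \N,\ c \in \Z\}$ of the skew Laurent polynomial algebra $\CA/(X) \simeq \mathbb{E}/(X)[K^{\pm 1};\tau]$. Writing a central $u = \sum c_{a,b,c}E^aY^bK^c$ and imposing $[u,K] = [u,E] = [u,Y] = 0$ using the relations $KE = q^2EK$, $KY = q^{-1}YK$ and $EY = q^{-1}YE$, the non-root-of-unity hypothesis forces term-by-term the linear constraints $b = 2a$ and $c = -a$ on every multi-index with $c_{a,b,c}\neq 0$. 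Therefore $u = \sum_{a \geq 0} c_a\, E^aY^{2a}K^{-a}$.

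To conclude, a short induction on $a$ from $Z = (1-q^2)EY^2K^{-1}$ and the $q$-commutation relations above yields $Z^a = (1-q^2)^a q^{a(a-1)} E^aY^{2a}K^{-a}$, a nonzero scalar multiple of $E^aY^{2a}K^{-a}$; substituting rewrites $u$ as a polynomial in $Z$, giving $u \in \mK[Z]$. An alternative derivation of (1) is to combine part (2) with linear independence of the monomial basis after further inverting $E$ and $Y$ in the localisation, which rules out negative powers of $Z$ in $u$, but the direct commutator computation seems cleaner. The only subtlety is bookkeeping of $q$-exponents during reordering; no conceptual obstacle is expected, with the non-root-of-unity hypothesis on $q$ doing the essential work throughout.
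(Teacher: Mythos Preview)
Your argument is correct. For part (2) you match the paper exactly: both use the tensor decomposition (\ref{EEX6}) and the centrality of $\mathbb{Y}$.

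For part (1) you take a genuinely different route. The paper's proof is the one-liner
\[
Z\bigl(\CA/(X)\bigr) \;=\; \CA/(X)\,\cap\,Z\Bigl(\bigl(\CA/(X)\bigr)_Z\Bigr) \;=\; \CA/(X)\,\cap\,\mK[Z^{\pm 1}] \;=\; \mK[Z],
\]
i.e.\ exactly the ``alternative derivation'' you mention at the end and set aside: embed the domain $\CA/(X)$ into its localization, use part (2), and then read off from the PBW description (\ref{EEX4}) that no negative power of $Z$ lies in $\CA/(X)$. You instead run the commutator constraints directly on the basis $\{E^aY^bK^c\}$ to force $b=2a$, $c=-a$, and then identify the resulting monomials with powers of $Z$. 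Your route is self-contained and makes the role of the $q$-exponents completely explicit; the paper's route is shorter and reuses part (2) rather than redoing a commutation analysis. Both ultimately rely on the same PBW basis, so the difference is organisational rather than substantive.
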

\begin{proof}
$Z\Big(\CA/(X)\Big) = \CA/(X) \cap Z\bigg(\Big(\CA/(X)\Big)_Z\bigg) = \CA/(X) \cap \mK[Z^{\pm 1}]= \mK[Z]$, by (\ref{EEX4}).
\end{proof}

\textbf{The algebra $\CA/(\varphi)$.} The element $\varphi$ is a normal element in the algebras $\mathbb{E}$ and $\CA$. By (\ref{EEX}), the factor algebra
\begin{align}
\mathbb{E}/(\varphi) = \mK[X][E, Y; \sigma, a = - \frac{X}{q^{-1}-q}], \quad \sigma(X)= qX,
\end{align}
is a GWA. Since $YE = - \frac{X}{q^{-1}-q}$ and $EY = q(- \frac{X}{q^{-1}-q})=qYE$, the algebra
\begin{align}
\mathbb{E}/(\varphi) \simeq \mK \langle E, Y \,|\, EY = qYE \rangle \label{EEP2} 
\end{align}
is isomorphic to the quantum plane. It is a Noetherian domain of Gelfand-Kirillov dimension 2. Now, the factor algebra
\begin{align}
\CA/(\varphi) \simeq \mathbb{E}/(\varphi) [K^{\pm 1}; \tau] \label{EEP3} 
\end{align}
is a skew Laurent polynomial algebra where $\tau(E)= q^2 E$ and $\tau(Y)= q^{-1}Y$. The algebra $\CA/(\varphi)$ is a Noetherian domain of Gelfand-Kirillov dimension 3. The element $C:= XYK \in \CA/(\varphi)$ belongs to the centre of the algebra $\CA/(\varphi)$.

The localization $\mathbb{E}_{X,Y}$ of the algebra $\mathbb{E}$ at the Ore set $\CS= \{ X^iY^j \,|\, i, j \in \N \}$,
\begin{align}
\mathbb{E}_{X,Y} = \mK[X^{\pm 1}, \varphi][Y^{\pm 1}; \sigma^{-1}], \;\; \sigma(X)=q X, \;\; \sigma(\varphi)= q^{-1} \varphi, \label{EEP4} 
\end{align}
is a skew Laurent polynomial algebra. Then the localization $\CA_{X,Y}$ of the algebra $\CA$ at the Ore set $\CS$ is equal to $\CA_{X,Y} = \mathbb{E}_{X,Y}[K^{\pm 1}; \tau]$. By (\ref{EEP2}) and (\ref{EEP3}), the localization of the algebra $\CA/(\varphi)$ at the powers of the element $C$,
\begin{align}
\bigg(\frac{\CA}{(\varphi)}\bigg)_C \simeq \frac{\CA_{X,Y}}{(\varphi)_{X,Y}} \simeq \bigg({\frac{\CA_X}{(\varphi)_X}}\bigg)_Y \simeq \mK[C^{\pm 1}] \otimes \mathbb{Y} \label{EEP5} 
\end{align}
is a tensor product of algebras where $\mathbb{Y}$ is the central simple algebra as in (\ref{EEX6}). Hence, the centre of the algebra $\Big(\CA/(\varphi)\Big)_C$ is $\mK[C^{\pm 1}]$.

\begin{lemma}\label{a24May15} 
\begin{enumerate}
\item $Z(\CA/\Big(\varphi)\Big)= \mK[C]$.
\item $Z\bigg(\Big(\CA/(\varphi)\Big)_C\bigg)= \mK[C^{\pm 1}]$.
\end{enumerate}
\end{lemma}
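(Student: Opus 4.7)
The plan is to mirror the strategy used for Lemma \ref{a23May15}: first establish statement (2) by invoking the tensor product decomposition of the localization, and then derive statement (1) by intersecting the centre of the localization with the algebra $\CA/(\varphi)$ itself.

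For statement (2), I would apply the isomorphism (\ref{EEP5}), which identifies $\bigl(\CA/(\varphi)\bigr)_C$ with $\mK[C^{\pm 1}] \otimes \mathbb{Y}$, where $\mathbb{Y} = \mK[Y^{\pm 1}][K^{\pm 1};\tau]$ was already recorded (in the discussion following (\ref{EEX6})) to be central simple because $\tau(Y) = q^{-1}Y$ and $q$ is not a root of unity. For a tensor product of a commutative algebra with a central simple algebra the centre is the commutative factor, so $Z\bigl((\CA/(\varphi))_C\bigr) = \mK[C^{\pm 1}] \otimes Z(\mathbb{Y}) = \mK[C^{\pm 1}]$.

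For statement (1), the text already records that $C = XYK$ is central in $\CA/(\varphi)$, so $\mK[C] \subseteq Z(\CA/(\varphi))$. Conversely, because $\CA/(\varphi)$ is a Noetherian domain by (\ref{EEP3}) and $C$ is regular, the localization map $\CA/(\varphi) \hookrightarrow (\CA/(\varphi))_C$ is injective, and therefore $Z(\CA/(\varphi)) \subseteq \CA/(\varphi) \cap Z\bigl((\CA/(\varphi))_C\bigr) = \CA/(\varphi) \cap \mK[C^{\pm 1}]$ by statement (2). It then remains to show that this intersection equals $\mK[C]$.

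To verify the intersection, use the PBW-type basis $\{Y^aE^bK^c : a,b\in\N,\ c\in\Z\}$ for $\CA/(\varphi)$ coming from (\ref{EEP2}) and (\ref{EEP3}), together with the relation $X = (q-q^{-1})YE$ which holds modulo $\varphi$ (since $\varphi = (q^{-1}-q)YE + X$). Using $EY = qYE$ inside the quantum plane $\mathbb{E}/(\varphi)$, one computes $C = XYK = q(q-q^{-1})Y^2EK$, so $C^n$ is a nonzero scalar multiple of $Y^{2n}E^nK^n$ in the quantum torus $(\CA/(\varphi))_C$, and these monomials are linearly independent there. Any element $\sum_{n\in\Z}\lambda_n C^n$ of $\mK[C^{\pm 1}]$ lies in $\CA/(\varphi)$ only if the negative-$n$ terms have vanishing coefficients, giving $\sum_{n\in\Z}\lambda_n C^n \in \mK[C]$. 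I do not anticipate a real obstacle; the only point demanding care is the bookkeeping with the PBW basis inside the localization, which is entirely analogous to the argument implicit in Lemma \ref{a23May15}.
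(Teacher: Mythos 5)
Your proof is correct and follows the same strategy as the paper's: establish statement (2) from the tensor decomposition (\ref{EEP5}) with $\mathbb{Y}$ central simple, then obtain statement (1) by intersecting $Z\bigl((\CA/(\varphi))_C\bigr)=\mK[C^{\pm 1}]$ with $\CA/(\varphi)$. The paper compresses the final step $\CA/(\varphi)\cap\mK[C^{\pm 1}]=\mK[C]$ to a citation of (\ref{EEP3}); your PBW computation $C^n=\text{(nonzero scalar)}\,Y^{2n}E^nK^n$ makes that step explicit and is sound.
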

\begin{proof}
$Z(\CA/\Big(\varphi)\Big) = \CA/(\varphi) \cap Z\bigg(\Big(\CA/(\varphi)\Big)_C\bigg) = \CA/(\varphi) \cap \mK[C^{\pm 1}]= \mK[C]$, by (\ref{EEP3}).
\end{proof}

For an algebra $A$, $\Spec\,(A)$ is the set of its prime ideals. The set $(\Spec\,(A), \subseteq)$ is a partially ordered set (poset) with respect to inclusion. Let $f: A \rightarrow B$ be an algebra epimorphism. Then $\Spec\,(B)$ can be seen as a subset of $\Spec\,(A)$ via the injection $\Spec\,(B) \rightarrow \Spec\,(A), \,\, \gp \mapsto f^{-1}(\gp)$. So, $\Spec\,(B)= \{\gq \in \Spec\,(A) \,|\, \ker(f) \subseteq \gq \}$.
Given a left denominator set $\CS$ of the algebra $A$. Then $\sigma: A \rightarrow \CS^{-1}A,\,\, a \mapsto s^{-1}a$, is an algebra homomorphism. If the algebra $A$ is a Noetherian algebra then $\Spec\,(\CS^{-1}A)$ can be seen as a subset of $\Spec\,(A)$ via the injection $\Spec\,(\CS^{-1}A) \rightarrow \Spec\,(A), \,\, \gq \mapsto \sigma^{-1}(\gq)$.

Let $R$ be a ring. Then each element $r \in R$ determines two maps from $R$ to $R$,  $r \cdot: x \mapsto rx$ and $\cdot r: x \mapsto xr$ where $x \in R.$  Recall that for an element $r \in R$, we denote by $(r)$ the ideal of $R$ generated by the element $r$.

\begin{proposition}\label{aA12Mar15} 
Let $R$ be a Noetherian ring and $s$ be an element of $R$ such that $\CS_s := \{s^i \,|\, i\in \N \}$ is a left denominator set  of the ring $R$ and $(s^i)= (s)^i$ for all $i\geqslant 1$ (e.g., $s$ is a normal element such that $\ker(\cdot s_{R}) \subseteq \ker(s_R \cdot)$). Then
$\Spec\,(R)= \Spec(R, s) \, \sqcup \, \Spec_s(R) $ where $\Spec(R,s):= \{ \gp \in \Spec\,(R)\,|\, s \in \gp \}$,  $\Spec_s(R)= \{\gq \in \Spec\,(R)\, |\, s \notin \gq \}$ and
\begin{enumerate}[label=(\alph*)]
\item  the map $\Spec\,(R, s) \rightarrow \Spec\,(R/(s)),\,\, \gp \mapsto \gp/(s)$, is a bijection with the inverse $\gq \mapsto \pi^{-1}(\gq)$ where $\pi: R \rightarrow R/(s), r \mapsto r+(s),$
\item the map $\Spec_s(R) \rightarrow \Spec\,(R_s), \,\, \gp \mapsto \CS_s^{-1}\gp,$ is a bijection with the inverse $\gq \mapsto \sigma^{-1}(\gq)$ where $\sigma: R \rightarrow R_s:= \CS_s^{-1}R, \,r \mapsto \frac{r}{1}$.
\item For all $\gp \in \Spec\,(R, s)$ and $\gq \in \Spec_s(R), \gp \not\subseteq \gq.$
\end{enumerate}
\end{proposition}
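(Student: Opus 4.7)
The disjoint union $\Spec(R) = \Spec(R,s) \sqcup \Spec_s(R)$ is tautological, and part (c) is immediate: if $\gp \in \Spec(R,s)$ and $\gq \in \Spec_s(R)$ then $s \in \gp \setminus \gq$, so $\gp \not\subseteq \gq$. The real content lies in parts (a) and (b).

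For part (a), the plan is to invoke the standard correspondence between prime ideals of $R$ containing an ideal $I$ and prime ideals of $R/I$, applied with $I = (s)$. Since a prime $\gp$ contains $s$ if and only if it contains $(s)$, this gives the required bijection, with inverse $\gq \mapsto \pi^{-1}(\gq)$.

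For part (b), I plan to invoke the standard localization correspondence for left denominator sets in Noetherian rings: if $\CS$ is a left denominator set of a Noetherian ring $R$, the map $\gp \mapsto \CS^{-1}\gp$ is a bijection between $\{\gp \in \Spec(R) \mid \gp \cap \CS = \emptyset\}$ and $\Spec(\CS^{-1}R)$, with inverse given by contraction $\gq \mapsto \sigma^{-1}(\gq)$. The only step that requires a small argument is identifying $\Spec_s(R)$ with $\{\gp \in \Spec(R) \mid \gp \cap \CS_s = \emptyset\}$. Clearly $1 \notin \gp$, so the condition $\gp \cap \CS_s = \emptyset$ is equivalent to $s^i \notin \gp$ for all $i \geq 1$. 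The forward direction is obvious. For the converse, suppose $s \notin \gp$; I need $s^i \notin \gp$ for every $i \geq 1$. Here is where the hypothesis $(s^i) = (s)^i$ enters: if $s^i \in \gp$ then $(s)^i = (s^i) \subseteq \gp$, and since $\gp$ is prime (so $IJ \subseteq \gp$ forces $I \subseteq \gp$ or $J \subseteq \gp$ for two-sided ideals), an inductive argument gives $(s) \subseteq \gp$, contradicting $s \notin \gp$.

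The main obstacle, such as it is, is precisely this last point: in a noncommutative ring a prime ideal need not be completely prime, so we cannot deduce $s \in \gp$ from $s^i \in \gp$ directly. The hypothesis $(s^i) = (s)^i$ is tailored to bypass this, by passing from elements to the two-sided ideals they generate, where the definition of prime applies. Granting this translation, both bijections are then instances of well-known results, and the proof is essentially assembly. I would conclude by briefly noting why the parenthetical example works: if $s$ is normal then $(s) = sR = Rs$ and $(s)^i = s^i R = (s^i)$ by moving all $s$'s to the left, while the condition $\ker(\cdot s_R) \subseteq \ker(s_R \cdot)$ is the standard sufficient condition for $\{s^i\}$ to be a left Ore (hence left denominator) set.
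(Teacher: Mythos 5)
Your proof is correct and follows essentially the same route as the paper: both treat (a) and (b) as instances of the standard ideal-correspondence and localization-correspondence theorems, and both isolate the real content as the equivalence ``$s^i \in \gp$ for some $i$ iff $s \in \gp$,'' obtained by promoting $s^i \in \gp$ to $(s)^i = (s^i) \subseteq \gp$ and invoking primality of $\gp$ for two-sided ideals. The only cosmetic difference is that you add a short justification of the parenthetical example, which the paper leaves implicit.
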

\begin{proof}

Clearly, $\Spec\,(R)= A_1 \, \sqcup \, A_0$ is a disjoint union where $A_1$ and $A_0$ are the subsets of $\Spec\,(R)$ that consist of prime ideals $\gp$ of $R$ such that $\gp \cap \CS_s \neq \emptyset$ and $\gp \cap \CS_s = \emptyset$, respectively. If $\gp \in A_1$ then $s^i \in \gp$ for some $i \geqslant 1$, and so $\gp \supseteq (s^i)=(s)^i$, by the assumption. Therefore, $\gp \supseteq (s)$ (since $\gp$ is a prime ideal), i.e., $\gp \ni s$. This means that $A_1 = \Spec\,(R, s)$. We have shown that $s \in \gp$ iff $s^i \in \gp$ for some $i\geqslant 1$. By the very definition, $A_0 = \Spec\,(R) \setminus A_1= \Spec\,(R) \setminus \{ \gp \in \Spec\,(R)\,|\,s \in \gp \}= \{ \gp \in \Spec\,(A)\,|\, s \notin \gp \}= \Spec_s(R)$.

The statement (a) is obvious since $s \in \gp$ iff $(s) \subseteq \gp$. The ring $R$ is Noetherian, by \cite[Proposition 2.1.16.(vii)]{MR}, the map $\Spec_s(R)=\{\gp \in \Spec\,(R)\,|\, \gp \cap \CS_s = \emptyset \} \rightarrow \Spec\,(R_s), \gp \mapsto \CS_s^{-1}\gp$ is a bijection with the inverse $\gq \mapsto \sigma^{-1}(\gq)$ and the statement (b) follows. The statement (c) is obvious.
\end{proof}

In the proof of Theorem \ref{A23Feb15} the following very useful lemma is used repeatedly.
\begin{lemma}\label{a22May15} 
Let $A$ be a ring, $\CS$ be a left denominator set of $A$ and $\sigma: A \rightarrow \CS^{-1}A, \, a \mapsto \frac{a}{1}$. Let $\gq$ be a completely prime ideal of $\CS^{-1}A$, $\gp$ be an ideal of $A$ such that $\gp \subseteq \sigma^{-1}(\gq)$ and $\CS^{-1}\gp = \gq$. Then $\gp = \sigma^{-1}(\gq)$ iff $A/\gp$ is a domain.
\end{lemma}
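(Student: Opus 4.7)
The plan is to prove the two implications separately, viewing the induced map $\bar\sigma: A/\gp \to (\CS^{-1}A)/\gq$ and deciding whether it is injective.

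The forward direction ($\Rightarrow$) is essentially immediate. Assuming $\gp = \sigma^{-1}(\gq)$, the composite ring homomorphism $A \xrightarrow{\sigma} \CS^{-1}A \twoheadrightarrow (\CS^{-1}A)/\gq$ has kernel equal to $\sigma^{-1}(\gq) = \gp$, so it factors through an injection $A/\gp \hookrightarrow (\CS^{-1}A)/\gq$. Since $\gq$ is completely prime, the target is a domain; hence so is $A/\gp$.

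For the backward direction ($\Leftarrow$), only the inclusion $\sigma^{-1}(\gq) \subseteq \gp$ remains. The strategy is to recognize $\bar\sigma$ as a localization map for $A/\gp$ and then exploit that a domain localizes injectively at a set avoiding $0$. Writing $\pi: A \twoheadrightarrow A/\gp$ and $\bar\CS := \pi(\CS)$, I first need the standard facts that $\bar\CS$ is a left denominator set in $A/\gp$ and that the canonical map yields an isomorphism $(\bar\CS)^{-1}(A/\gp) \cong (\CS^{-1}A)/(\CS^{-1}\gp) = (\CS^{-1}A)/\gq$ under which $\bar\sigma$ becomes the localization homomorphism; I would invoke this from \cite{MR} (in the spirit of Proposition 2.1.16 there) rather than redo the bookkeeping. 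Next, $0 \notin \bar\CS$: otherwise some $s \in \CS$ lies in $\gp \subseteq \sigma^{-1}(\gq)$, so $\sigma(s) \in \gq$; but $\sigma(s)$ is a unit of $\CS^{-1}A$, forcing $\gq = \CS^{-1}A$, contradicting that $\gq$ is prime (hence proper).

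With $A/\gp$ a domain and $0 \notin \bar\CS$, the kernel of the localization map $A/\gp \to (\bar\CS)^{-1}(A/\gp)$, namely $\{\bar a \in A/\gp : \bar s\,\bar a = 0 \text{ for some } \bar s \in \bar\CS\}$, is trivial. Translating this back through the identification above gives $\ker(\bar\sigma) = 0$, i.e., $\sigma^{-1}(\gq) \subseteq \gp$, which together with the hypothesis $\gp \subseteq \sigma^{-1}(\gq)$ yields $\gp = \sigma^{-1}(\gq)$. The only mildly delicate point is the quotient-commutes-with-localization step; everything else is formal.
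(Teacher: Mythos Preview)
Your proof is correct and follows essentially the same approach as the paper's: both directions argue via the induced map $A/\gp \to (\CS^{-1}A)/\gq$, and for $(\Leftarrow)$ both show $\CS \cap \gp = \emptyset$ and then use that a domain has trivial $\CS$-torsion to conclude $\sigma^{-1}(\gq)/\gp = 0$. The only cosmetic difference is that the paper deduces $\CS \cap \gp = \emptyset$ from $\CS^{-1}(A/\gp) \simeq \CS^{-1}A/\gq \neq 0$ rather than from your unit-in-a-proper-ideal argument, and it phrases the conclusion as $\sigma^{-1}(\gq)/\gp \subseteq \tor_\CS(A/\gp) = 0$ without explicitly passing through the quotient-commutes-with-localization isomorphism.
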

\begin{proof}
($\Rightarrow$) Since $A/\sigma^{-1}(\gq) \subseteq \CS^{-1}A/\gq$ and $\CS^{-1}A/\gq$ is a domain (since $\gq$ is a completely prime ideal), the algebra $A/\sigma^{-1}(\gq)$ is a domain.

($\Leftarrow$) The left $\CS^{-1}A$-module $\CS^{-1}(A/\gp) \simeq \CS^{-1}A/\CS^{-1}\gp \simeq \CS^{-1}A/\gq$ is not equal to zero. In particular, $\CS \cap \gp = \emptyset$. So, for all $s \in \CS$, the elements $s+ \gp$ are nonzero in $A/\gp$. Since $A/\gp$ is a domain, $\tor_{\CS}(A/\gp)=0.$ Clearly, $\sigma^{-1}(\gq)/\gp \subseteq \tor_{\CS}(A/\gp).$ Hence, $\gp = \sigma^{-1}(\gq)$.
\end{proof}

\textbf{The prime spectrum of the algebra $\CA$.}
The key idea in finding the prime spectrum of the algebra $\CA$ is to use Proposition \ref{aA12Mar15} repeatedly and the following diagram of algebra homomorphisms
\begin{align}
\begin{tikzpicture}[semithick,scale=2.5, text centered]
\node (A) at (0, 2) {$\CA$};
\node (Ax) at (1, 2) {$\CA_X$};
\node (Axf) at (2, 2) {$\CA_{X, \varphi}$};
\node (Amx) at (0, 1.5) {$\CA/(X)$};
\node (Axmfx) at (1, 1.5) {$\CA_X/(\varphi)_X$};
\node (Amxz) at (0,1) {$\CA/(X, Z)$};
\node (Axlz) at (1, 1) {$\quad \Big(\CA/(X)\Big)_Z$};
\node (U) at (0, 0.5) {$U= \CA/(X, Z, Y)\quad \qquad$};
\node (Y) at (1, 0.5) {$\qquad\qquad\; \Big(\CA/(X, Z)\Big)_Y \simeq \mathbb{Y}$};
\node (L) at (0, 0) {$L= U/(E)$};
\node (UE) at (1, 0) {$U_E\quad$};
 \draw[->] (A) edge[left] (Ax);
 \draw[->] (Ax) edge[left] (Axf);
\draw[->] (A) edge[below] (Amx);
 \draw[->] (Amx) edge[below] (Amxz);
  \draw[->] (Amxz) edge[below] (U);
   \draw[->] (U) edge[below] (L);
    \draw[->] (Ax) edge[below] (Axmfx);
     \draw[->] (Amx) edge (Axlz);
      \draw[->] (Amxz) edge (Y);
       \draw[->] (U) edge (UE);
   \end{tikzpicture} \label{NAYXF} 
   \\
   \Big({\rm where\,\,} L=\mK[K^{\pm 1}] \,\,{\rm and}\,\, U:= U^{\geqslant 0}_q(\sl2)\Big) \notag
\end{align}
that explains the choice of elements at which we localize. Using (\ref{NAYXF}) and Proposition \ref{aA12Mar15}, we represent the spectrum $\Spec\,(\CA)$ as the disjoint union of the following subsets where we identify the sets of prime ideals via the bijections given in the statements (a) and (b) of Proposition \ref{aA12Mar15}:
\begin{align}
\Spec\,(\CA)= \Spec\,(L) \,\, & \sqcup \,\, \Spec\,(U_E) \,\, \sqcup \,\, \Spec\,(\mathbb{Y}) \notag\\
 & \sqcup \,\, \Spec\,\Big((\CA/(X))_Z\Big) \,\, \Spec\,\Big(\CA_X/(\varphi)_X \Big)\,\, \sqcup \,\, \Spec\,(\CA_{X, \varphi}).
\label{NAYXF1} 
\end{align}
A prime ideal $\gp$ of an algebra $A$ is called a \emph{completely prime ideal} if $A/\gp$ is a domain. We denote by  $\Spec_c(A)$ the set of completely prime ideals of $A$, it is called the \emph{completely prime spectrum} of $A$.
The theorem below gives an explicit description of the prime ideals of the algebra $\CA$ together with inclusions of prime ideals.
\begin{theorem}\label{A23Feb15} 
The prime spectrum $\Spec\,(\CA)$ of the algebra $\CA$ is the disjoint union of sets (\ref{NAYXF1}). More precisely,
\begin{align}
\begin{tikzpicture}[semithick, scale=0.6, text centered]
\node (YEK) at (0,8) {\small $\Big\{(Y, E, \gp)\,| \, \gp \in \Max\,(\mK[K, K^{-1}])\Big\}$};
\node (YE) at (0, 6)  {\small $(Y, E)$};
\node (Y)  at (-1, 4)  {\small $(Y)$};
\node (XZ)  at (-4, 4)  {\small $\Big\{(X, \gq)\,|\,  \gq \in \Max\,(\mK[Z])\setminus \{(Z) \} \Big\}\quad \quad \quad \quad \qquad \qquad \qquad $};
\node (E)  at (1, 4) {\small $(E)$};
\node (VC) at (4, 4) {\small $\qquad \qquad \qquad \qquad \qquad \Big\{ (\varphi, \gr)\,|\, \gr \in \Max\,(\mK[C])\setminus \{(C)\} \Big\}$};
\node (X) at (-2, 2)  {\small $(X)$};
\node (V) at (2, 2) {\small $(\varphi)$};
\node (0) at (0, 0)  {\small $0$};
\draw [ shorten <=-2pt, shorten >=-2pt] (YEK)--(YE)--(Y)--(X)--(0)--(V);
\draw [ shorten <=-2pt, shorten >=-2pt] (XZ)--(X)--(E)--(V)--(VC);
\draw [ shorten <=-2pt, shorten >=-2pt] (YE)--(E);
\draw [ shorten <=-2pt, shorten >=-2pt] (Y)--(V);
\end{tikzpicture} \label{SpecCA} 
\end{align}
where
\begin{enumerate} 
\item $\Spec\,(L) = \{ (Y, E, \gp)\,|\, \gp \in \Spec\,(\mK[K^{\pm 1}]) \}$ and $\CA/(Y, E, \gp) \simeq L/\gp$ where $L = \mK[K^{\pm 1}]$.
\item $\Spec\,(U_E)= \{(Y) \}$, $(Y)= (X,Y)= (X, Z, Y)$ and $\CA/(Y) \simeq \mK[E][K^{\pm 1};\tau]$ where $\tau(E)= q^2 E.$
\item $\Spec\,(\mathbb{Y})= \{ (E) \}$, $(E)= (E, X)= (E, \varphi)$ and $\CA/(E) \simeq \mK[Y][K^{\pm 1}; \tau]$ where $\tau(Y)=q^{-1}Y.$
\item $\Spec\,\Big((\CA/(X))_Z\Big)= \{ (X), (X, \gq)\,|\, \gq \in \Max(\mK[Z])\setminus \{ (Z)\} \}, Z= \varphi YK^{-1}$,
   \begin{enumerate} 
   \item $\CA/(X) \simeq \mathbb{E}/(X)[K^{\pm 1}; \tau]$ is a domain (see (\ref{EEX4})), and
   \item $\CA/(X, \gq) \simeq \frac{\CA_{\varphi, Y}}{(X, \gq)_{\varphi, Y}} \simeq L_{\gq} \otimes \mathbb{Y}$ is a simple domain which is a tensor product of algebras where $L_{\gq}:= \mK[Z]/\gq$ is a finite field extension of $\mK$.
   \end{enumerate}
\item $\Spec\,(\frac{\CA_X}{(\varphi)_X})= \{ (\varphi), (\varphi, \gr) \,|\, \gr \in \Max(\mK[C])\setminus \{(C)\} \}, C = XYK$, $\frac{\CA_X}{(\varphi)_X} \simeq \frac{\CA_{X, Y}}{(\varphi)_{X,Y}} \simeq \mK[C^{\pm 1}]\otimes \mathbb{Y}$,
       \begin{enumerate}
       \item $\CA/(\varphi) = \mathbb{E}/(\varphi)[K^{\pm 1}; \tau]$ is a domain (see (\ref{EEP3})).
       \item $\CA/(\varphi, \gr) \simeq \frac{\CA_{X,Y}}{(\varphi, \gr)_{X,Y}} \simeq L_{\gr} \otimes \mathbb{Y}$ is a simple domain which is a tensor product of algebras where $L_{\gr}:= \mK[C]/\gr$ is a finite field extension of $\mK$.
      \end{enumerate}
\item $\Spec\,(\CA_{X, \varphi})=\{0 \}.$
\end{enumerate}
\end{theorem}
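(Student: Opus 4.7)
The plan is to iterate Proposition~\ref{aA12Mar15} along the diagram (\ref{NAYXF}), splitting the prime spectrum at each stage by a suitable normal element. First apply Proposition~\ref{aA12Mar15} to $\CA$ with $s=X$ (normal in $\CA$ by the defining relations), obtaining $\Spec(\CA)=\Spec(\CA/(X))\sqcup\Spec(\CA_X)$. Next split $\Spec(\CA_X)$ by $s=\varphi$ (normal by (\ref{XqYE})) into $\Spec(\CA_X/(\varphi)_X)\sqcup\Spec(\CA_{X,\varphi})$; split $\Spec(\CA/(X))$ by $s=Z$ (central in $\CA/(X)$ by Lemma~\ref{a23May15}) into $\Spec(\CA/(X,Z))\sqcup\Spec\bigl((\CA/(X))_Z\bigr)$; split $\Spec(\CA/(X,Z))$ by $s=Y$, using the isomorphism $(\CA/(X,Z))_Y\simeq\mathbb{Y}$ noted after (\ref{NAYXF}); and finally split $\Spec(U)$, where $U=\CA/(X,Y)=U_q^{\geqslant 0}(\sl2)$, by $s=E$ into $\Spec(L)\sqcup\Spec(U_E)$. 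The hypothesis $(s^i)=(s)^i$ of Proposition~\ref{aA12Mar15} holds because each $s$ is a normal non-zero-divisor in the relevant Noetherian domain. Together these five splittings give the disjoint union (\ref{NAYXF1}).

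\textbf{Identifying the six pieces.} One has $\Spec(\CA_{X,\varphi})=\{0\}$ by Lemma~\ref{b24May15}, while $\Spec(\mathbb{Y})=\{0\}$ and $\Spec(U_E)=\{0\}$ because both $\mathbb{Y}=\mK[Y^{\pm 1}][K^{\pm 1};\tau]$ and $U_E=\mK[K^{\pm 1}][E^{\pm 1};\tau]$ are two-variable quantum tori whose skewing scalar ($q^{-1}$ and $q^{2}$ respectively) is not a root of unity, hence are central simple. Similarly, $\Spec(L)=\Spec(\mK[K^{\pm 1}])$ is the standard spectrum of a commutative PID. For the two remaining pieces, the isomorphisms (\ref{EEX6}) and (\ref{EEP5}) give $(\CA/(X))_Z\simeq\mK[Z^{\pm 1}]\otimes\mathbb{Y}$ and $\CA_X/(\varphi)_X\simeq\mK[C^{\pm 1}]\otimes\mathbb{Y}$; since $\mathbb{Y}$ is central simple, the standard contraction/extension isomorphism \cite[Corollary~1.5.(a)]{Good-Letzter-PrimeQuanSpac} identifies their prime spectra with $\Spec(\mK[Z^{\pm 1}])$ and $\Spec(\mK[C^{\pm 1}])$ respectively.

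\textbf{Factor algebras and complete primeness.} To exhibit explicit generators for each $\gp\in\Spec(\CA)$ and to compute $\CA/\gp$, apply Lemma~\ref{a22May15}: it suffices in every case to propose a candidate generating set contained in the genuine pullback and to verify that the resulting quotient is a domain. For $(X)$ and $(\varphi)$, (\ref{EEX4}) and (\ref{EEP3}) identify the quotients with skew Laurent polynomial rings over quantum planes, which are domains. For $(X,\gq)$ with $\gq\in\Max(\mK[Z])\setminus\{(Z)\}$ and $(\varphi,\gr)$ with $\gr\in\Max(\mK[C])\setminus\{(C)\}$, the tensor-product descriptions give $L_\gq\otimes\mathbb{Y}$ and $L_\gr\otimes\mathbb{Y}$, which are simple domains (base change of a central simple quantum torus by a field extension). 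For the intermediate primes $(Y)$ and $(E)$, the defining relation $EY-q^{-1}YE=X$ forces $X\in(Y)\cap(E)$, whence $(Y)=(X,Y)=(X,Z,Y)$ and $(E)=(X,E)=(\varphi,E)$; the resulting quotients are again skew Laurent polynomial rings, hence domains. The primes from $\Spec(L)$ are handled directly via the standard theory of $\mK[K^{\pm 1}]$. In particular every prime ideal of $\CA$ is completely prime.

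\textbf{Main obstacle and the Hasse diagram.} The delicate step is the application of Lemma~\ref{a22May15} in parts (4b) and (5b): one must confirm that the two-generator ideals $(X,\gq)$ and $(\varphi,\gr)$ are already the full preimages of the corresponding maximal ideals of the localisations, and this reduces precisely to the domain property of the quotients, supplied by the tensor-product descriptions together with the centrality of $Z$ and $C$ from Lemmas~\ref{a23May15} and~\ref{a24May15}. The Hasse diagram (\ref{SpecCA}) then follows by direct inspection of which of the generators $X$, $Y$, $E$, $\varphi$ lie in each listed prime ideal, using the identities $(Y)=(X,Y)$, $(E)=(X,E)=(\varphi,E)$ and $(Y,E)\supseteq(X)\cup(\varphi)$.
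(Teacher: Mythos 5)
Your proposal follows the paper's own strategy almost step for step: the same iterated application of Proposition~\ref{aA12Mar15} along diagram (\ref{NAYXF}), the same choice of normal/central elements $X,\varphi,Z,Y,E$ at each stage, the same identifications of the terminal algebras as simple (quantum tori) or central-simple tensor products, and the same use of Lemma~\ref{a22May15} to recognise $(X)$ and $(\varphi)$ as the full contractions. The overall verdict is that this is the same proof.

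One caveat worth flagging, because it affects a single statement you make in passing. You justify the hypothesis $(s^i)=(s)^i$ of Proposition~\ref{aA12Mar15} by asserting that ``each $s$ is a normal non-zero-divisor in the relevant Noetherian domain''. This is false for the fourth application: $\CA/(X,Z)$ is \emph{not} a domain (it has the relation $EY^2=0$, so $EY\cdot Y=0$ with $EY\neq 0$), and $Y$ is a zero-divisor there. The conclusion survives nevertheless: for a normal element one always has $(s^i)=(s)^i$, and in $\CA/(X,Z)$ the left and right annihilators of $Y$ coincide (both are spanned by $\{E^mY \mid m\geqslant 1\}$ over $\mK[K^{\pm 1}]$), which is exactly the parenthetical sufficient condition in Proposition~\ref{aA12Mar15} guaranteeing that $\{Y^i\}$ is a denominator set. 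You should replace the blanket ``non-zero-divisor in a domain'' justification by the weaker but correct observation that each $s$ is normal and satisfies $\ker(\cdot s)\subseteq\ker(s\cdot)$ in the relevant ring; the paper itself only records normality here (statement (viii)), so this is an imprecision shared with the source, but since you spelled it out explicitly you should spell it out correctly. Also, for the maximal ideals $(X,\gq)$ and $(\varphi,\gr)$ the cleanest argument for $\gp=\CA\cap\gp_{\bullet}$ is not Lemma~\ref{a22May15} (domain) but maximality: the quotient is a simple ring, so any proper ideal between $(X,\gq)$ and its localised contraction is forced to coincide with it; this is the route the paper takes and it avoids having to verify $\CS^{-1}(X,\gq)$ is the target prime.
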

\begin{proof}
As it was already mentioned above, we identify the sets of prime ideals via the bijection given in the statements (a) and (b) of Proposition \ref{aA12Mar15}. Recall that the set $\CS_X= \{X^i \,|\, i \in \N \}$ is a left and right denominator set of $\CA$ and $\CA_X:= \CS^{-1}_X \CA \simeq \CA \CS^{-1}_X$ is a Noetherian domain (since $\CA$ is so). The element $X$ is a normal element of $\CA$. By Proposition \ref{aA12Mar15},
\begin{align}
\Spec\,(A)= \Spec\,(\CA, X) \,\, \sqcup \,\, \Spec\,(\CA_X) \label{AXX}
\end{align}
and none of the ideals of the set $\Spec\,(\CA, X)$ is contained in an ideal of the set $\Spec\,(\CA_X)$. Similarly, the element $\varphi$ is a normal element of $\CA_X$ and, by Proposition \ref{aA12Mar15}
\begin{align}
\Spec\,(\CA_X) = \Spec\,\bigg(\Big(\CA/(\varphi)\Big)_X\bigg) \,\, \sqcup \,\, \Spec\,(\CA_{X, \varphi}). \label{AXX1} 
\end{align}
By Lemma \ref{b24May15}, the algebra $\CA_{X, \varphi}$ is a simple domain. Hence, $\Spec\,(\CA_{X, \varphi})= \{0 \},$ and statement 6 is proved.

(i) $\frac{\CA_X}{(\varphi)_X} \simeq \frac{\CA_{X, Y}}{(\varphi)_{X,Y}} \simeq \mK[C^{\pm 1}]\otimes \mathbb{Y}$: The second isomorphism holds, by (\ref{EEP5}). Using the equalities $\varphi = (q^{-1}-q)YE + X = (1-q^2)EY+ q^2 X$ we see that the elements $Y$ and $E$ are invertible in the algebra $\frac{\CA_X}{(\varphi)_X}$, and so the first isomorphism holds.

(ii) $\CA/(\varphi, \gr) \simeq \frac{\CA_{X,Y}}{(\varphi, \gr)_{X,Y}} \simeq L_{\gr} \otimes \mathbb{Y}$ \emph{for all prime ideals} $\gr \in \Max\,(\mK[C])\setminus \{(C) \}$: Since $\gr \neq (C)$, the non-zero element $C= XYK \in L_{\gr}$ is invertible in the field $L_{\gr}$. Hence, the elements $X$ and $Y$ are invertible in the algebra $\CA/(\varphi, \gr)$. Hence,
\begin{align}
\CA/(\varphi, \gr) = \frac{\CA_{X,Y}}{(\varphi, \gr)_{X,Y}}. \label{AXX2} 
\end{align}
Now, the statement (ii) follows from (\ref{AXX2}) and the statement (i).

(iii) \emph{Statement 5 holds}: Recall that the algebra $\mathbb{Y}$ is a central simple algebra. By the statement (i), the set $\Spec\,(\CA_X/ (\varphi)_X)$, as a subset of $\Spec\,(\CA)$, is equal to $\{ \CA \cap (\varphi)_X, \, \CA \cap (\varphi, \gr)_X \,|\, \gr \in \Max(\mK[C])\setminus \{(C)\} \}$. It remains to show that $\CA \cap (\varphi)_X = (\varphi)$ and $\CA \cap (\varphi, \gr)_X= (\varphi, \gr)$. The second equality follows from the statement (ii) since $(\varphi, \gr) \subseteq \CA \cap (\varphi, \gr)_X \subseteq \CA \cap (\varphi, \gr)_{X, Y} \stackrel{\rm(ii)}{=} (\varphi, \gr)$. Since $\mathbb{Y}$ is a central simple algebra, the statement (b) of statement 5 follows. The statement (a) of statement 5 is obvious (see (\ref{EEP3})). Hence, $(\varphi)= \CA \cap (\varphi)_X$, by Proposition \ref{a22May15}. So, statement 5 holds.

By Proposition \ref{aA12Mar15},
\begin{align}
\Spec\,\Big(\CA/(X)\Big) = \Spec\,\Big(\CA/(X, Z)\Big) \,\, \sqcup \,\, \Spec\,\Big(\Big(\CA/(X)\Big)_Z\Big) \label{AXX3} 
\end{align}
and none of the ideals of the set $\Spec\,(\CA/(X, Z))$ is contained in an ideal of the set $\Spec\,((\CA/(X))_Z)$.

(iv) $\CA/(X, \gq) \simeq \frac{\CA_{Y, \varphi}}{(X, \gq)_{Y, \varphi}} \simeq L_{\gq} \otimes \mathbb{Y}$ \emph{is a simple domain for all prime ideals} $\gq \in \Max(\mK[Z]) \setminus \{(Z) \}$: Since $\gq \neq (Z)$, the non-zero element $Z= \varphi YK^{-1} \in L_{\gq}$ is invertible in the field $L_{\gq}$. Hence, the elements $\varphi$ and $Y$ are invertible in the algebra $\CA/(X, \gq)$. Therefore,
\begin{align}
\CA/(X, \gq) \simeq \frac{\CA_{Y, \varphi}}{(X, \gq)_{Y, \varphi}}. \label{AXX4} 
\end{align}
Now, by (\ref{EEX6}), the statement (iv) holds.

(v) \emph{Statement 4 holds}: The algebra $\mathbb{Y}$ is a central simple algebra. By (\ref{EEX6}), the set $\Spec\,((\CA/(X))_Z)$, as a subset of $\Spec\,(\CA)$, is equal to $\{ \CA \cap (X)_{Y, \varphi}, \CA \cap (X, \gq)_{Y, \varphi} \,|\, \gq \in \Max(\mK[Z])\setminus \{(Z)\} \}$. We have to show that $\CA \cap (X)_{Y, \varphi}= (X)$ and $\CA \cap (X, \gq)_{Y, \varphi} = (X, \gq)$. The last equality follows from the statement (iv) (the algebra $\CA/(X, \gq)$ is simple and $(X, \gq) \subseteq \CA \cap (X, \gq)_{Y, \varphi} \subsetneqq \CA$, hence $(X, \gq)= \CA \cap (X, \gq)_{Y, \varphi}$). Now, the statement (b) of statement 4 holds. The statement (a) is obvious, see (\ref{EEX4}). Hence, $(X)= \CA \cap (X)_{Y, \varphi}$, by Proposition \ref{a22May15}. So, the proof of the statement (v) is complete.

In the algebra $\CA$, using the equality $\varphi = X + (q^{-1}-q)YE$ we see that
\begin{align}
Z \equiv \varphi YK^{-1} \equiv (q^{-1}-q)YEYK^{-1} \equiv (1-q^2)EY^2K^{-1} \mod (X). \label{ZEYK1} 
\end{align}

(vi) $(Y)= (X, Y)= (X, Z, Y)$: The first equality follows from the relation $X= EY - q^{-1}YE$. Then the second equality follows from (\ref{ZEYK1}).

(vii) $(E)= (E, X) = (E, \varphi)$: The first equality follows from the relation $X= EY - q^{-1}YE$. Then the second equality follows from the  definition of the element $\varphi = X + (q^{-1}-q)YE.$

(viii) \emph{The elements $Y$ and $E$ are normal in  $\CA/(X)$}: The statement follows from (\ref{EEX3}) and (\ref{EEX4}).

(ix) $\bigg(\frac{\CA}{(X, Z)}\bigg)_Y \simeq \bigg(\frac{\CA}{(E)}\bigg)_Y \simeq \mathbb{Y}$ \emph{is a simple domain}: By (\ref{ZEYK1}), $(X, Z)= (X, EY^2) \subseteq (X, E) \stackrel{\rm (vii)}{=} (E)$, hence $(X, EY^2)_Y = (X, E)_Y = (E)_Y$, by the statement (vii). Now, by (\ref{EEX4}), $\bigg(\frac{\CA}{(X, Z)}\bigg)_Y \simeq \frac{\CA_Y}{(X, Z)_Y}\simeq \frac{\CA_Y}{(E)_Y} \simeq \bigg(\frac{\CA}{(X, E)}\bigg)_Y \simeq \mathbb{Y}$.

By the statement (viii) and Proposition \ref{aA12Mar15},
\begin{align}
\Spec\,\Big(\CA/(X, Z)\Big)= \Spec\,\Big(\CA/(X, Z,Y)\Big) \,\, \sqcup \,\, \Spec\,\Big(\Big(\CA/(X, Z)\Big)_Y \Big). \label{AXX5} 
\end{align}
By the statement (vi), $\CA/(X, Z, Y) \simeq \CA/(X,Y) \simeq U:= U^{\geqslant 0}_q(\sl2)$. By the statement (ix),  $\Big(\CA/(X,Z)\Big)_Y \simeq \CA_Y / (E)_Y \simeq \mathbb{Y}$ is a simple domain. So, the set $\Spec\,\Big(\Big(\CA/(X, Z)\Big)_Y\Big),$ as a subset of $\Spec\,(\CA)$, consists of the ideal $(E)$. In more details, since $(X, Z) \subseteq (E), (X, Z)_Y = (E)_Y$ (see the proof of the statement (ix)) and $\CA/(E) = \CA/(E, X) = \mK[Y][K^{\pm 1}; \tau]$ is a domain, the result follows from Proposition \ref{a22May15}. So, statement 3 holds.

The element $E$ is a normal element of the algebra $U$. By Proposition \ref{aA12Mar15},
\begin{align}
\Spec\,(U)= \Spec\,(U/(E)) \,\, \sqcup \,\, \Spec\,(U_E). \label{AXX6} 
\end{align}
Since $L = U/(E)$, statement 1 follows. The algebra $U_E \simeq \mK[E^{\pm 1}][K^{\pm 1};\tau]$ is a central simple domain. Since $U = \CA/(Y) = \CA/(X, Z,Y)$ (the statement (vi)) is a domain, the set $\Spec\,(U_E)$, as a subset of $\Spec\,(\CA)$, consists of a single ideal $(Y)$, and statement 2 follows.

We proved that (\ref{NAYXF1}) holds. Clearly, we have the inclusions as on the diagram (\ref{SpecCA}). It remains to show that there is no other inclusions. The ideals $(Y, E, \gp), (X, \gq)$ and $(\varphi, \gr)$ are the maximal ideals of the algebra $\CA$ (see statement 1, 4, and 5). By (\ref{AXX4}) and the relations given in (\ref{SpecCA}), there are no additional lines leading to the maximal ideals $(X, \gq)$. Similarly, by (\ref{AXX2}) and the relations given in (\ref{SpecCA}), there are no additional lines leading to the maximal ideals $(\varphi, \gr)$. The elements $X$ and $\varphi$ are normal elements of the algebra $\CA$ such that $(X) \not\subseteq (\varphi)$ and $(X) \not\supseteq (\varphi)$, by (\ref{EEX1}). The proof of the theorem is complete.
\end{proof}

For an algebra $A$, $\Max(A)$ is the set of its maximal ideals. The next corollary is an explicit description of the set $\Max(\CA)$.
\begin{corollary}\label{a24Feb15} 
$\Max\,(\CA) = \CP \,\, \sqcup \,\, \CQ\,\, \sqcup \,\,\CR$ where $\CP:=
\Big\{(Y, E, \gp)\,| \, \gp \in \Max\,(\mK[K, K^{-1}]) \Big\}, \,\, \CQ:= \Big\{(X, \gq)\,|\,  \gq \in \Max\,(\mK[Z])\setminus \{(Z)\} \Big\}$ and $\CR:= \Big\{ (\varphi, \gr)\,|\, \gr \in \Max\,(\mK[C])\setminus \{(C)\} \Big\}.$
\end{corollary}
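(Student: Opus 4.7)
The plan is to read off the maximal ideals directly from the diagram (\ref{SpecCA}) in Theorem \ref{A23Feb15}. Since that theorem provides a complete description of $\Spec\,(\CA)$ as a poset, the maximal ideals are exactly the maximal elements of that poset, and it suffices to (i) verify that each element of $\CP \sqcup \CQ \sqcup \CR$ really is maximal and (ii) check that every other prime ideal listed in Theorem \ref{A23Feb15} is properly contained in some element of $\CP \sqcup \CQ \sqcup \CR$.

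For step (i), I would compute the quotient $\CA/\gm$ in each case. If $\gm = (Y, E, \gp) \in \CP$, then Theorem \ref{A23Feb15}(1) gives $\CA/\gm \simeq \mK[K^{\pm 1}]/\gp$, which is a field because $\gp$ is a maximal ideal of the principal ideal domain $\mK[K^{\pm 1}]$; hence $\gm$ is maximal. If $\gm = (X, \gq) \in \CQ$, then Theorem \ref{A23Feb15}(4)(b) identifies $\CA/\gm$ with the tensor product $L_\gq \otimes \mathbb{Y}$, which is a simple algebra since $L_\gq$ is a field and $\mathbb{Y}$ is a central simple $\mK$-algebra (the centrality and simplicity of $\mathbb{Y}$ were established in the discussion of (\ref{EEX6})). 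The same argument, using Theorem \ref{A23Feb15}(5)(b), handles $\gm = (\varphi, \gr) \in \CR$.

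For step (ii), it suffices to walk through the remaining six prime ideals appearing in (\ref{SpecCA}), namely $0$, $(X)$, $(\varphi)$, $(Y)$, $(E)$, $(Y, E)$, and exhibit a strictly larger prime. Concretely, $0 \subsetneq (X)$; $(X) \subsetneq (Y)$ by the identity $X = EY - q^{-1}YE$ together with the statement (vi) from the proof of Theorem \ref{A23Feb15}, the inclusion being strict since $\CA/(X)$ has Gelfand--Kirillov dimension $3$ whereas $\CA/(Y) \simeq \mK[E][K^{\pm 1}; \tau]$ has Gelfand--Kirillov dimension $2$; likewise $(\varphi) \subsetneq (E)$ by statement (vii) and the corresponding dimension comparison; $(Y) \subsetneq (Y, E)$ and $(E) \subsetneq (Y, E)$; and finally $(Y, E) \subsetneq (Y, E, \gp)$ for any $\gp \in \Max\,(\mK[K^{\pm 1}])$. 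Each ambient prime we hit in this way lies in $\CP \cup \CQ \cup \CR$ (or is strictly below one, in which case we iterate).

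I do not expect a genuine obstacle here: all the hard work has been done in Theorem \ref{A23Feb15}, which gives both the list of primes and the inclusion relations. The only point that needs a moment of care is the simplicity of $L_\gq \otimes \mathbb{Y}$ and $L_\gr \otimes \mathbb{Y}$, but this is immediate from the fact that $\mathbb{Y}$ is a central simple $\mK$-algebra and $L_\gq, L_\gr$ are finite field extensions of $\mK$, so their tensor products with $\mathbb{Y}$ remain simple. Modulo that observation, the corollary is a direct bookkeeping consequence of (\ref{SpecCA}).
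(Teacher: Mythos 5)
Your proof is correct and takes essentially the same approach as the paper, which simply reads the maximal ideals off the Hasse diagram (\ref{SpecCA}) of $\Spec\,(\CA)$ established in Theorem \ref{A23Feb15}. You spell out the two verification steps (each listed ideal is maximal because its quotient is a field or a simple algebra, and every other prime on the diagram sits strictly below one of them), which is just a more explicit version of the paper's one-line appeal to (\ref{SpecCA}).
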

\begin{proof}
The corollary follows from (\ref{SpecCA}).
\end{proof}

Let $M$ be an $\CA$-module, $a \in \CA$ and $a_M \cdot : M \rightarrow M, \,\, m \mapsto am$. The ideal of $\CA$, $\ann_{\CA}(M):= \{ a \in \CA\,|\, aM=0 \}$ is called the \emph{annihilator} of $M$. The $\CA$-module is called \emph{faithful} if it has zero annihilator. The next corollary is a faithfulness criterion for simple $\CA$-modules.

\begin{corollary}\label{b29Mar15} 
Let $M$ be a simple $\CA$-module. Then $M$ is a faithful $\CA$-module iff $\ker(X_M \cdot)= \ker(\varphi_M \cdot) =0$ iff $M_X \neq 0$ and $M_{\varphi} \neq 0$ (where $M_X$ and $M_{\varphi}$ are the localizations of the $\CA$-module $M$ at the powers of the elements $X$ and $\varphi$, respectively).
\end{corollary}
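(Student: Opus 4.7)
The plan is to read the faithfulness criterion off the Hasse diagram (\ref{SpecCA}) of $\Spec(\CA)$: every non-zero prime ideal of $\CA$ contains at least one of the normal elements $X$ or $\varphi$. This is immediate from the diagram once one recalls that $(Y)=(X,Y)$ and $(E)=(X,E)=(E,\varphi)$ (statements (vi) and (vii) in the proof of Theorem \ref{A23Feb15}), so that $(Y)$, $(E)$, $(Y,E)$ and each $(Y,E,\gp)$ all contain $X$, while $(X,\gq)\supseteq(X)$ and $(\varphi,\gr)\supseteq(\varphi)$ by construction. Hence $0$ is the unique prime of $\CA$ avoiding both $X$ and $\varphi$.

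Since $M$ is simple, $\ann_{\CA}(M)$ is primitive, hence prime, so by the paragraph above $\ann_{\CA}(M)=0$ iff $X\notin\ann_{\CA}(M)$ and $\varphi\notin\ann_{\CA}(M)$. I would then reduce the first \emph{iff} of the corollary to the following statement for $a\in\{X,\varphi\}$ (both normal by (\ref{XqYE}) and the defining relations): $a\notin\ann_{\CA}(M)$ iff $\ker(a_M\cdot)=0$. Normality forces both $aM$ and $\ker(a_M\cdot)$ to be $\CA$-submodules of the simple module $M$, hence each equals $0$ or $M$, and $a\in\ann_{\CA}(M)$ iff $aM=0$ iff $\ker(a_M\cdot)=M$. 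Applying this to $a=X$ and $a=\varphi$ gives the first equivalence.

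For the second \emph{iff}, I would use that $M_a$ is the localization of $M$ at the Ore set $\CS_a=\{a^i\}_{i\in\N}$, so $M_a\neq 0$ iff the $\CS_a$-torsion submodule $\tor_{\CS_a}(M)$ is proper in $M$. Normality of $a$ makes $\tor_{\CS_a}(M)$ an $\CA$-submodule of $M$, and simplicity forces it to be $0$ or $M$; moreover if $\tor_{\CS_a}(M)=M$ then picking any non-zero $m\in M$ and the minimal $i\geqslant 1$ with $a^im=0$ yields a non-zero element $a^{i-1}m$ of $\ker(a_M\cdot)$, so $\ker(a_M\cdot)=M$ by simplicity. Thus $M_a\neq 0$ iff $\ker(a_M\cdot)=0$, which for $a\in\{X,\varphi\}$ yields the second equivalence. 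No serious obstacle is anticipated; the only point requiring care is the catalogue of primes containing $X$ or $\varphi$, and this is a direct read-off from Theorem \ref{A23Feb15}.
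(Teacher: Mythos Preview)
Your proof is correct and follows essentially the same approach as the paper: both arguments use that $X$ and $\varphi$ are normal (so their kernels on $M$ are submodules, hence $0$ or $M$ by simplicity) together with the description (\ref{SpecCA}) of $\Spec(\CA)$ to see that every nonzero prime contains $X$ or $\varphi$. Your treatment of the second ``iff'' via the $\CS_a$-torsion submodule is slightly more detailed than the paper's one-line remark, but the content is the same.
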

\begin{proof}
The $\CA$-module is simple, so $\ann_{\CA}(M) \in \Spec\,(\CA)$. The elements $X$ and $\varphi$ are normal elements of the algebra $\CA$. So, $\ker(X_M \cdot)$ and $\ker(\varphi_M \cdot)$ are submodules of $M$. Either $\ker(X_M\cdot)=0$ or $\ker(X_M \cdot)=M$, and in the second case $\ann_{\CA}(M) \supseteq (X)$. Similarly, either $\ker(\varphi_M \cdot)=0$ or $\ker(\varphi_M \cdot)=M$, and in the second case $\ann_{\CA}(M) \supseteq (\varphi)$. Conversely, if $\ann_{\CA}(M) =0$ then $\ker(X_M \cdot)= \ker(\varphi_M \cdot)=0$. If $\ker(X_M \cdot)=\ker(\varphi_M \cdot)=0$ then $\ann_{\CA}(M) =0$, by (\ref{SpecCA}). So, the first `iff' holds.

For a normal element $u = X, \varphi$, $\ker(u_M \cdot)=0$ iff $M_u \neq 0$. Hence, the second `iff' follows.
\end{proof}

Let $A$ be an algebra. The annihilator of each simple $A$-module is a prime ideal. Such prime ideals are called \emph{primitive} and the set $\Prim\,(A)$ of all of them is called the \emph{primitive spectrum} of $A$.
\begin{proposition} \label{a27Feb15} 
$\Prim(\CA)= \Max(\CA) \,\sqcup\, \{ (Y), \,\, (E), \,\, 0 \}$.
\end{proposition}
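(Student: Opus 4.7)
\begin{proof*}[Proof plan]
The plan is to verify $\Max(\CA) \subseteq \Prim(\CA)$ (which holds in any ring: given $\gm \in \Max(\CA)$ the algebra $\CA/\gm$ is simple, and any simple left module over it---which exists by Zorn's lemma---has $\CA$-annihilator exactly $\gm$) and then to treat the five non-maximal prime ideals $0, (X), (Y), (E), (\varphi)$ of (\ref{SpecCA}) individually. I will show that $(Y), (E), 0$ are primitive by exhibiting simple faithful modules over the corresponding factor algebras, and that $(X), (\varphi)$ are not primitive using the non-trivial central elements $Z, C$ of Theorem~\ref{A23Feb15}.4--5.

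For $(Y) \in \Prim(\CA)$, I consider the Ore extension $R := \CA/(Y) \simeq \mK[E][K^{\pm 1};\tau]$ with $\tau(E) = q^2 E$ (Theorem~\ref{A23Feb15}.2) and the cyclic module $M_\mu := R/R(E - \mu)$ for any $\mu \in \mK^*$. Using $KE = q^2 EK$, one checks that $M_\mu$ has $\mK$-basis $\{v_n := K^n \bar{1}\}_{n \in \Z}$ with $K v_n = v_{n+1}$ and $E v_n = q^{-2n}\mu v_n$. Since $q$ is not a root of unity, the $E$-eigenvalues are pairwise distinct; isolating any $v_n$ from a nonzero vector of $M_\mu$ via polynomials in $E$ and then translating by $K^{\pm 1}$ proves simplicity, while faithfulness follows from a Vandermonde argument applied to the condition that $p = \sum_i p_i(K) E^i$ annihilate every $v_N$, $N \in \Z$. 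A symmetric construction over $\CA/(E) \simeq \mK[Y][K^{\pm 1};\tau]$ gives $(E) \in \Prim(\CA)$, and a doubly-generic analogue---a simple quotient of $\CA$ by a left ideal forcing both $X$ and $\varphi$ to act invertibly---yields a simple $\CA$-module $M$ with $M_X, M_\varphi \neq 0$, which is faithful by Corollary~\ref{b29Mar15}, so that $0 \in \Prim(\CA)$.

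The main obstacle is to show $(X) \notin \Prim(\CA)$; the case of $(\varphi)$ is symmetric, using the central element $C = XYK$ of $\CA/(\varphi)$ in place of $Z$. Suppose $(X) = \ann_\CA M$ for some simple $\CA$-module $M$, equivalently $M$ is a simple faithful module over $\CA/(X)$. By Theorem~\ref{A23Feb15}.4, the image of $Z = \varphi Y K^{-1}$ is a nonzero central element of the domain $\CA/(X)$. Schur's lemma forces $Z$ to act on $M$ either as $0$ (impossible, since then $0 \neq (Z) \subseteq \ann_{\CA/(X)} M$) or invertibly, so $M$ becomes a simple module over $(\CA/(X))_Z \simeq \mK[Z^{\pm 1}] \otimes \mathbb{Y}$, with $\mathbb{Y}$ central simple (Theorem~\ref{A23Feb15}.4(b)). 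The Goodearl--Letzter bijection \cite[Corollary~1.5(a)]{Good-Letzter-PrimeQuanSpac} (already invoked in Lemma~\ref{a22Feb15}) identifies the lattice of ideals of the quantum torus $(\CA/(X))_Z$ with that of its centre $\mK[Z^{\pm 1}]$; hence the prime $\ann_{(\CA/(X))_Z}(M)$ corresponds either to a nonzero maximal ideal of $\mK[Z^{\pm 1}]$---contradicting faithfulness---or to the zero ideal, meaning $\mK[Z^{\pm 1}]$ acts faithfully on $M$. In this remaining case Schur forces every nonzero polynomial in $Z$ to act invertibly on $M$, so $M$ extends to a simple module over the central simple algebra $\mK(Z) \otimes \mathbb{Y}$; one then inspects the classification of simple modules over this quantum torus (each is induced from a character of a maximal commutative subalgebra such as $\mK(Z)[K^{\pm 1}]$) and verifies that for every cyclic vector $m$ the $(\CA/(X))_Z$-submodule $\mK[Z^{\pm 1}] \cdot (\mathbb{Y} m) \subseteq M$ involves only finitely many $\mK[Z]$-denominators, hence is a proper submodule of $\mK(Z) \cdot (\mathbb{Y} m) = M$, contradicting simplicity over $(\CA/(X))_Z$. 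This rules out the last case and finishes the proof.
\end{proof*}
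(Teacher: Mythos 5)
Your decomposition of $\Prim(\CA)$ is in the right spirit, and your arguments for $\Max(\CA)\subseteq\Prim(\CA)$ and for $(Y),(E)\in\Prim(\CA)$ (the Vandermonde-type faithfulness check over $\CA/(Y)\simeq\mK[E][K^{\pm1};\tau]$) are correct, though the paper instead deduces $\ann_\CA(M(\lambda))=(Y)$ from the inclusion lattice (\ref{SpecCA}) together with the observation that this annihilator contains $(Y)$ but not $(Y,E)$; both routes work. However there are two genuine problems.

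First, your case list of non-maximal primes is incomplete: by (\ref{SpecCA}) they are $0,(X),(\varphi),(Y),(E)$ \emph{and} $(Y,E)$. You never address $(Y,E)$. This one is easy — $\CA/(Y,E)\simeq\mK[K^{\pm1}]$ is commutative and not a field, so it cannot be primitive (the paper treats it in the same breath as $(X)$ and $(\varphi)$, via the central element $K$) — but it must be stated.

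Second, and more seriously, the final step of your argument that $(X)\notin\Prim(\CA)$ has a real gap. You correctly reduce (via Schur's lemma applied to the central $Z$, and the Goodearl--Letzter correspondence for the quantum torus $(\CA/(X))_Z$) to ruling out a simple $(\CA/(X))_Z$-module on which $\mK[Z^{\pm1}]$ acts faithfully. But the concluding appeal to ``finitely many $\mK[Z]$-denominators'' is not a proof: once every nonzero element of $\mK[Z]$ acts invertibly, $M$ is a $\mK(Z)$-vector space and there is no canonical $\mK[Z^{\pm1}]$-lattice against which to measure denominators, so the claim that $\mK[Z^{\pm1}]\cdot(\mathbb{Y}m)$ is a \emph{proper} submodule of $\mK(Z)\cdot(\mathbb{Y}m)=M$ is unsubstantiated. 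The paper's apparently terse justification — that $\CA/(X)$ contains the non-scalar central element $Z$ — is really invoking the Nullstellensatz for $\CA$ (valid because $\CA$ is an iterated Ore extension): for any simple module $M$ over $\CA/(X)$, $\End_{\CA/(X)}(M)$ is algebraic over $\mK$, so the transcendental central element $Z$ must be killed by a nonzero polynomial, contradicting faithfulness. That is the rigorous form of the argument and it cannot be replaced by the denominator sketch without substantial extra work. The same gap affects your treatment of $(\varphi)$. Your treatment of $0\in\Prim(\CA)$ is also only a sketch; the paper relies on the explicit construction of simple faithful weight modules in Theorem~\ref{25Mar15}.
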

\begin{proof}
Clearly, $\Prim(\CA) \supseteq \Max(\CA)$. The ideals $(X), \,(\varphi)$ and $(Y, E)$ are not primitive ideals as the corresponding factor algebras contain the central elements $Z, C$ and $K$, respectively.

(i) Let us show that $(Y) \in \Prim(\CA)$. For $\l \in \mK^*$, let $I_{\l} = (Y)+ \CA(E-\l)$. Since $\CA/(Y) \simeq U$, the left $\CA$-module $M(\l):= \CA/I(\l) \simeq U/U(E-\l) \simeq \mK[K^{\pm 1}] \bar{1}$ is a simple $\CA$-module/$U$-module where $\bar{1}= 1+ I(\l)$. By the very definition, the prime ideal $\ga:= \ann_{\CA}(M(\l))$ contains the ideal $(Y)$ but does not contain the ideal $(Y, E)$ since otherwise we would have $0= E \bar{1} = \l \bar{1}  \neq 0$, a contradiction. By (\ref{SpecCA}), $\ga = (Y).$

(ii)  Let us show that $(E) \in \Prim(\CA)$. By Theorem \ref{A23Feb15}, $(E)=(E,X)$ and $\bar{\CA}:= \CA/(E) \simeq \mK[Y][K^{\pm 1};\sigma]$ where $\sigma(Y)=q^{-1}Y.$ For $\l \in \mK^*$, the $\CA$-module $T(\l):= \bar{\CA}/\bar{\CA}(Y-\l) \simeq \mK[K^{\pm 1}]\bar{1}$ is a simple module (since $q$ is not a root of 1), where $\bar{1}= 1+ \bar{\CA}(Y-\l)$. Clearly, the prime ideal $\gb:= \ann_{\CA}(T(\l))$ contains the ideal $(E)$ but does not contain the ideal $(Y, E)$ since otherwise we would have $0= Y \bar{1} = \l \bar{1}\neq 0$, a contradiction. By (\ref{SpecCA}), $\gb= (E)$.

(iii) By Theorem \ref{25Mar15}, $0$ is a primitive ideal of $\CA$.
\end{proof}


\begin{corollary}\label{a13Mar15} 
Every prime ideal of the algebra $\CA$ is completely prime, i.e., $\Spec_c(\CA)= \Spec\,(\CA)$.
\end{corollary}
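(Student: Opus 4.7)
\begin{proof*}[Proof plan]
The strategy is to exploit the complete list of prime ideals in Theorem \ref{A23Feb15} and verify in each of the six families that the corresponding factor algebra is a domain. In fact, in every case Theorem \ref{A23Feb15} already exhibits $\CA/\gp$ either as an iterated (skew) Laurent/polynomial extension of a commutative domain, or as a tensor product of a field with a quantum torus. So the proof is essentially a case-by-case inspection with no new computations required.

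First I would dispose of the ``generic'' ideals. For $\gp=0$ the algebra $\CA$ itself is a Noetherian domain by (\ref{EEX})--(\ref{EEX1}). For the height-one prime $(Y)$, statement~2 of Theorem \ref{A23Feb15} gives $\CA/(Y)\simeq \mK[E][K^{\pm 1};\tau]$, a skew Laurent extension of a commutative polynomial domain, hence a domain. Likewise $\CA/(E)\simeq \mK[Y][K^{\pm 1};\tau]$ is a domain by statement~3. For $(X)$ and $(\varphi)$, statements~4(a) and~5(a) give $\CA/(X)\simeq \mathbb{E}/(X)[K^{\pm 1};\tau]$ and $\CA/(\varphi)\simeq \mathbb{E}/(\varphi)[K^{\pm 1};\tau]$, and by (\ref{EEX3}) and (\ref{EEP2}) the rings $\mathbb{E}/(X)$ and $\mathbb{E}/(\varphi)$ are quantum planes, hence domains; skew-extending by $K^{\pm 1}$ preserves this.

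Next I would treat the three families of maximal ideals. For $(Y,E,\gp)$ with $\gp\in\Spec(\mK[K^{\pm 1}])$, statement~1 identifies $\CA/(Y,E,\gp)$ with $\mK[K^{\pm 1}]/\gp$, a domain since $\mK[K^{\pm 1}]$ is commutative and $\gp$ is prime. For $(X,\gq)$ with $\gq\in\Max(\mK[Z])\setminus\{(Z)\}$ and for $(\varphi,\gr)$ with $\gr\in\Max(\mK[C])\setminus\{(C)\}$, statements~4(b) and~5(b) exhibit the factor algebras as $L_{\gq}\otimes_\mK\mathbb{Y}$ and $L_{\gr}\otimes_\mK\mathbb{Y}$, where $L_{\gq},L_{\gr}$ are field extensions of $\mK$ and $\mathbb{Y}=\mK[Y^{\pm 1}][K^{\pm 1};\tau]$. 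The tensor product $L_{\gq}\otimes_\mK\mathbb{Y}$ is simply $L_{\gq}[Y^{\pm 1}][K^{\pm 1};\tau]$, i.e.\ a skew Laurent polynomial ring over the Laurent polynomial ring in $Y$ over the field $L_{\gq}$, and is therefore a domain; similarly for $L_{\gr}\otimes_\mK\mathbb{Y}$.

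I do not expect any real obstacle: the work was already done in Theorem \ref{A23Feb15}, which displays each factor algebra $\CA/\gp$ in a form from which ``domain'' is immediate. The only point worth a brief sentence is that the tensor products $L_{\gq}\otimes_\mK\mathbb{Y}$ and $L_{\gr}\otimes_\mK\mathbb{Y}$ really are domains (tensor products of domains over a field are not in general domains, but here one factor is a central Laurent quantum algebra so the tensor product reduces to a skew Laurent polynomial ring over a field, which is unambiguously a domain). Since every prime ideal of $\CA$ belongs to exactly one of the six families of (\ref{NAYXF1}), this exhausts $\Spec(\CA)$ and yields $\Spec_c(\CA)=\Spec(\CA)$.
\end{proof*}
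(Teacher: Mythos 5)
Your proposal is correct and follows exactly the route the paper intends: the published proof is the one-line ``See Theorem \ref{A23Feb15},'' and that theorem already displays each prime factor $\CA/\gp$ as a domain (a skew Laurent extension of a commutative domain, a quantum plane over a skew Laurent ring, or $L_{\gq}\otimes\mathbb{Y}$, etc.). You have simply spelled out the case-by-case reading of that theorem, including the one point worth noting---that $L_{\gq}\otimes_{\mK}\mathbb{Y}$ is a skew Laurent ring over a field and hence a domain.
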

\begin{proof}
See Theorem \ref{A23Feb15}.
\end{proof}

\section{The automorphism group of $\CA$} \label{AutoGroup} 
In this section, the group $G:= \Aut_{\mK}(A)$ of automorphisms of the algebra $\CA$ is found (Theorem \ref{B27Feb15}). Corollary \ref{b27Feb15} describes the orbits of the action of the group $G$ on $\Spec\,(\CA)$ and the set of fixed points.

We introduce a degree filtration on the algebra $\CA$ by setting $\deg(K)= \deg(K^{-1})=0$ and $\deg(E)= \deg(X)= \deg(Y)=1$. So, $\CA= \bigcup_{n \in \N} \CA[n]$ where $\CA[n]= \sum \mK X^i Y^j E^k K^l$ with $\deg(X^i Y^j E^k K^l):= i+j+k \leqslant n$. Let $\Gr\, \CA:= \bigoplus_{i \in \N}\CA[i]/\CA[i-1]$ (where $\CA[-1]:=0$) be the associated graded algebra of $\CA$ with respect to the filtration $\{ \CA[n] \}_{n \geqslant 0}$. For an element $a \in \CA$, we denote by $\Gr\,a \in \Gr\, \CA$ the image of $a$ in $\Gr\, \CA$. It is clear that $\Gr\, \CA$ is an iterated Ore extension, $\Gr\,\CA \simeq \mK[X][Y;\alpha][E;\beta][K^{\pm 1};\gamma]$ where $\alpha(X)=q^{-1}X,\, \beta(X)=qX,\, \beta(Y)=q^{-1}Y, \,\gamma(X)=qX, \, \gamma(Y)=q^{-1}Y$ and $\gamma(E)=q^2E$. In particular, $\Gr\, \CA$ is a Noetherian domain of Gelfand-Kirillov dimension $\GK(\Gr\, \CA)=4$ and the elements $X, Y$ and $E$ are normal in $\Gr\,\CA$.

The group of units $\CA^*$ of the algebra $\CA$ is equal to $\{ \mK^* K^i \,|\, i\in \Z \}= \mK^* \times \langle K \rangle$ where $\langle K \rangle = \{ K^i \,|\, i \in \Z \}$. The next theorem is an explicit description of the group $G$.

\begin{theorem}\label{B27Feb15} 
$\Aut_{\mK}(\CA)= \{ \sigma_{\l, \mu, \gamma, i} \,|\, \l, \mu, \gamma \in \mK^*, \, i \in \Z \} \simeq (\mK^*)^3 \rtimes \Z$ where $\sigma_{\l, \mu,\gamma, i}: X \mapsto \l K^i X, \, Y \mapsto \mu K^{-i}Y, \, K \mapsto \gamma K, \, E \mapsto \l \mu^{-1} q^{-2i} K^{2i}E$ (and $\sigma_{\l, \mu, \gamma,i}(\varphi)= \l K^i \varphi$). Furthermore, $\sigma_{\l, \mu, \gamma, i} \sigma_{\l',\mu', \gamma', j}= \sigma_{\l \l' \gamma^j, \mu \mu' \gamma^{-j}, \gamma \gamma',  i+j}$ and $\sigma^{-1}_{\l, \mu, \gamma, i}= \sigma_{\l^{-1}\gamma^i, \mu^{-1}\gamma^{-i}, \gamma^{-1}, -i}.$
\end{theorem}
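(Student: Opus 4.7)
My plan is to analyze an arbitrary $\mK$-algebra automorphism $\sigma$ of $\CA$ by successively pinning down its action on the units, on the two height-one primes, and then on the remaining generators. The associated graded $\Gr\,\CA \simeq \mK[X][Y;\alpha][E;\beta][K^{\pm 1};\gamma]$ is an iterated Ore extension, hence a domain, from which one reads off the units of $\CA$ as $\CA^* = \mK^*\langle K\rangle$. So $\sigma(K) = \gamma K^\epsilon$ for some $\gamma\in\mK^*$ and $\epsilon\in\{\pm 1\}$. By Theorem~\ref{A23Feb15}, the only height-one primes of $\CA$ are $(X)$ and $(\varphi)$, both principal and generated by normal elements; since $\CA$ is a domain, a standard argument shows that the normal generators of $(X)$ are precisely $\mK^*\langle K\rangle\cdot X$, and analogously for $(\varphi)$. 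Any automorphism must permute these two primes, so $\sigma(X)$ lies in $\mK^*\langle K\rangle\cdot X \cup \mK^*\langle K\rangle\cdot\varphi$, which in either case is contained in the $q$-eigenspace $\CA_1$ of $\ad K$. Comparing $K$-weights then forces $\epsilon = 1$, i.e.\ $\sigma(K) = \gamma K$.

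The most delicate step is to rule out the swap $\sigma((X)) = (\varphi)$. Assume $\sigma(X) = \alpha K^j\varphi$ and $\sigma(\varphi) = \beta K^i X$. The $K$-weight constraints force $\sigma(Y)\in \CA_{-1}$ and $\sigma(E)\in\CA_2$, and the commutation relations $\sigma(Y)\sigma(X) = q^{-1}\sigma(X)\sigma(Y)$ and $\sigma(E)\sigma(X) = q\sigma(X)\sigma(E)$ --- read through the bi-grading by $K$-weight and $\varphi$-weight --- together with the identity $\sigma(\varphi) = \sigma(X) + (q^{-1}-q)\sigma(Y)\sigma(E)$ force $\sigma(Y) = \mu K^{-j-2}Y$ and $\sigma(E) = \nu K^{2j+2}E$ for some $\mu,\nu\in\mK^*$. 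Substituting into the defining relation $EY - q^{-1}YE = X$ and equating the coefficients of $XK^j$ and $YEK^j$ yields the forced identity $q^4 = 1$, contradicting the assumption that $q$ is not a root of unity. Hence the swap cannot occur, and $\sigma(X) = \l K^i X$ with $\sigma(\varphi) = \l K^i \varphi$.

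With these in hand, the analogous weight-space analysis together with the relation $qYX = XY$ pins down $\sigma(Y) = \mu K^{-i}Y$; then $EY - q^{-1}YE = X$ uniquely determines $\sigma(E) = \l\mu^{-1}q^{-2i}K^{2i}E$. A routine verification shows that each quadruple $(\l,\mu,\gamma,i) \in (\mK^*)^3 \times \Z$ defines a $\mK$-algebra automorphism of $\CA$, and direct computation of compositions and inverses yields the stated multiplication law, exhibiting $\Aut_\mK(\CA)$ as the semidirect product $(\mK^*)^3 \rtimes \Z$. The main obstacle is the middle paragraph: since the $K$-weight spaces are infinite-dimensional, one must carefully combine the $K$-weight and $\varphi$-weight decompositions with the identity for $\sigma(\varphi)$ in order to reduce $\sigma(Y)$ and $\sigma(E)$ to monomials before the contradiction $q^4 = 1$ can be extracted.
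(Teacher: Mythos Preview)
Your proposal is correct and follows the same overall strategy as the paper: determine $\sigma(K)$ from the unit group, restrict $\sigma(X)$ via the two height-one primes $(X)$ and $(\varphi)$, reduce $\sigma(Y)$ and $\sigma(E)$ to monomials, and extract $q^4=1$ in the swap case.

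The technical execution differs slightly, and one step of yours deserves a comment. The paper first invokes the degree filtration \emph{uniformly} for both cases: since $YE=(q^{-1}-q)^{-1}(\varphi-X)$ has degree~$2$, multiplicativity of degree in $\Gr\CA$ gives $\deg\sigma(Y)+\deg\sigma(E)=2$, and since neither image lies in $\mK[K^{\pm1}]$ one gets $\deg\sigma(Y)=\deg\sigma(E)=1$. At that point the $K$-eigenvalue alone forces $\sigma(Y)=a(K)Y$ and $\sigma(E)=b(K)E$; only then does the paper split into cases, determining $a$ from $qYX=XY$ and (in the swap case) obtaining $q^4=1$ from $Y\varphi=q\varphi Y$. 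In your version the bigrading by $K$-weight and $\varphi$-weight alone does \emph{not} cut the possibilities down to finitely many monomials --- the bihomogeneous pieces are still infinite-dimensional --- and it is your third ingredient, the identity $\sigma(\varphi)=\sigma(X)+(q^{-1}-q)\sigma(Y)\sigma(E)$, that is doing the paper's degree work: the right-hand side has degree~$2$, so again $\deg\sigma(Y)+\deg\sigma(E)=2$. Once both have degree~$1$ and $K$-weight $-1$ (resp.~$2$), one is already in $\mK[K^{\pm1}]Y$ (resp.~$\mK[K^{\pm1}]E$), and the $\varphi$-weight then fixes the power of $K$. Your final contradiction from $EY-q^{-1}YE=X$ is a valid alternative to the paper's use of $Y\varphi=q\varphi Y$; both lead to $q^4=1$.

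Incidentally, in the swap case you could also bring in the $X$-weight (from $\sigma(Y)\sigma(\varphi)=q\sigma(\varphi)\sigma(Y)$ with $\sigma(\varphi)=\beta K^iX$): with $i=j$, the three weight constraints on $\sigma(Y)$ are already mutually inconsistent in the PBW monomial basis, ruling out case~(b) without computing $\sigma(E)$ at all.
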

\begin{proof}
Using the defining relations of the algebra $\CA$, one can verify that $\sigma_{\l, \mu, \gamma,i} \in G$ for all $\l, \mu, \gamma \in \mK^*$ and $i \in \Z$. The subgroup $G'$ generated by these automorphisms is isomorphic to the semi-direct product $(\mK^*)^3 \rtimes \Z.$ It remains to show that $G= G'$. Recall that the elements $X$ and $\varphi$ are normal in the algebra $\CA$. Let $\sigma \in G$, we have to show that $\sigma \in G'.$

By (\ref{SpecCA}), there are two options either the ideals $(X)$ and $(\varphi)$ are $\sigma$-invariant or, otherwise, they are interchanged. In more details, either, for some elements $\l, \l' \in \mK^*$ and $i,j \in \Z$,
\begin{enumerate}[label=(\alph*)]
\item $\sigma(X) = \l K^i X$ and $\sigma(\varphi)= \l' K^j \varphi$, or, otherwise,
\item $\sigma(X)= \l K^i \varphi$ and $\sigma(\varphi)= \l' K^j X.$
\end{enumerate}

(i) \emph{$\sigma(K)= \gamma K$ for some $\gamma \in \mK^*$}: The group of units $\CA^*$ of the algebra $\CA$ is equal to $\{ \gamma K^s \,|\, \gamma \in \mK^*, \, s \in \Z \}$. So, either $\sigma(K)= \gamma K$ or, otherwise, $\sigma(K)= \gamma K^{-1}$ for some $\gamma \in \mK^*$. Let us show that the second case is not possible. Notice that $KX = q XK$ and $K \varphi = q \varphi K$, i.e., the elements $X$ and $\varphi$ have the same commutation relation with the element $K$.  Because of that it suffices to consider one of the cases (a) or (b) since then the other case can be treated similarly. Suppose that the case (a) holds and that $\sigma(K)=\gamma K^{-1}$. Then the equality $\sigma(K)\sigma(X)= q \sigma(X) \sigma(K)$ yields the equality $\gamma K^{-1} \cdot \l K^i X = q \l K^i X \cdot \gamma K^{-1}= q\l \gamma q K^{i-1}X$. Hence, $q^2 =1$, a contradiction.

(ii) \emph{$\deg \sigma(Y)= \deg \sigma(E)=1$}: Recall that $YE= q'^{-1}(\varphi - X)$ where $q':= q^{-1}-q.$ By applying $\sigma$ to this equality we obtain the equality $\sigma(Y)\sigma(E)= q'^{-1} \sigma(\varphi -X)$. Hence, $\deg\Big(\sigma(Y) \sigma(E)\Big)= \deg\Big(q'^{-1}\sigma(\varphi -X)\Big)=2$, in both cases (a) and (b). Thus, there are three options for the pair $\Big(\deg\sigma(Y), \deg \sigma(E) \Big)$: $(1, 1), \,\, (0, 2)$ or $(2, 0)$. The last two options are not possible since otherwise we would have $\sigma(Y) \in \mK[K^{\pm 1}]$ or $\sigma(E) \in \mK[K^{\pm 1}]$, respectively. Hence, $\sigma(Y)\sigma(K)= \sigma(K) \sigma(Y)$ or $\sigma(E) \sigma(K)= \sigma(K) \sigma(E)$, respectively. But this is impossible since $YK \neq KY$ and $EK \neq KE$. Therefore, $\deg \sigma(Y)= \deg \sigma(E)=1.$

(iii) \emph{$\sigma(Y)=aY$ and $\sigma(E)=bE$ for some nonzero elements $a, b \in \mK[K^{\pm 1}]$}: Applying $\sigma$ to the relation $KEK^{-1} = q^2 E$ we obtain the equality $K \sigma(E)K^{-1} = q^2 \sigma(E)$. Since $\deg \sigma(E)=1$, $\sigma(E)= bE + u X + v Y + w$ for some elements $b, u, v, w \in \mK[K^{\pm 1}]$. Using the relations $KXK^{-1}= qX$ and $KYK^{-1}= q^{-1}Y$, we see that $u=v=w=0$, i.e., $\sigma(E)= bE$. Similarly, applying $\sigma$ to the relation $KYK^{-1}= q^{-1}Y$, we obtain the equality $K \sigma(Y) K^{-1} = q^{-1} \sigma(Y)$. Since $\deg \sigma(Y)=1$, $\sigma(Y)= a Y + u' E + v' X + w'$ for some elements $a, u', v', w' \in \mK[K^{\pm 1}]$. Using the relations $KXK^{-1} = qX, \,\, KYK^{-1}= q^{-1}Y$ and $KEK^{-1}= q^2 E$, we see that $u'= v'= w'=0,$ i.e., $\sigma(Y)= a Y.$

(iv) \emph{$i=j$ (see the cases (a) and (b))}: The elements $X$ and $\varphi$ commute, hence $\sigma(X)\sigma(\varphi)= \sigma(\varphi) \sigma(X)$. Substituting the values of $\sigma(X)$ and $\sigma(\varphi)$ into this equality yields $q^{-i}= q^{-j}$ in both cases (a) and (b), i.e., $i=j$ (since $q$ is not a root of unity).

(v) \emph{The case (b) is not possible}: Suppose that the case (b) holds, i.e., $\sigma(X)= \l K^i \varphi$ and $\sigma(\varphi) = \l' K^i X$ (see the statement (iv)), we seek a contradiction. To find the contradiction we use the relations $qYX = XY$ and $Y \varphi = q \varphi Y$. Applying the automorphism $\sigma$ to the first equality gives $\sigma(qYX)= q aY \cdot \l K^i \varphi = q a\l q^i K^i Y \varphi $ and $\sigma(XY)= \l K^i \varphi \cdot a Y = \l K^i \tau(a) \varphi Y = \l K^i \tau(a) q^{-1} Y \varphi $ where $\tau$ is the automorphism of the algebra $\mK[K^{\pm 1}]$ given by the rule $\tau(K)= q^{-1}K$. Hence, $\tau(a)= q^{i+2}a$, i.e., $a = \xi K^{-i-2}$ for some $\xi \in \mK^*$. So, $\sigma(Y)= \xi K^{-i-2}Y$. Now applying $\sigma$ to the second equality, $Y \varphi = q \varphi Y$, we have the equalities $\sigma(Y \varphi)= \xi K^{-i-2}Y \cdot \l' K^i X = \xi \l' q^i K^{-2} YX$ and $\sigma(q\varphi Y)= q\l' K^i X \cdot \xi K^{-i-2} Y = \xi \l' q^{i+3} K^{-2} XY = \xi \l' q^{i+4} K^{-2} YX$. Therefore, $q^4 =1$, a contradiction (since $q$ is not a root of unity). This means that the only case (a) holds. Summarizing, we have $\sigma(X)= \l K^i X, \,\, \sigma(K)= \gamma K,\,\, \sigma(\varphi)= \l' K^i \varphi, \,\, \sigma(Y)= aY, \,\, {\rm and}\,\, \sigma(E)=bE.$

(vi) \emph{$a = \mu K^{-i}$ for some $\mu \in \mK^*$ (i.e., $\sigma(Y)= \mu K^{-i} Y$)}: Applying the automorphism $\sigma$ to the relation $qYX =XY$ yields: $\sigma(qYX)= q aY \cdot \l K^i X = \l a q^i K^i qYX = \l a q^i K^i XY$ and $\sigma(XY)= \l K^i X  \cdot aY = \l K^i \tau(a) XY$. Therefore, $\tau(a)= q^i a$, i.e., $a = \mu K^{-i}$ for some element $\mu \in \mK^*$.

(vii) \emph{$b= \d K^{2i}$ for some $\d \in \mK^*$ (i.e., $\sigma(E)= \d K^{2i} E$)}: Applying the automorphism $\sigma$ to the relation $q XE = EX$ yields: $\sigma(qXE)= q \l K^i X \cdot bE = \l K^i \tau(b) q XE = \l K^i \tau(b) EX$ and $\sigma(EX)= bE \cdot \l K^i X = \l K^i b q^{-2i} EX$. Therefore, $\tau(b)= q^{-2i}b$, i.e., $b = \d K^{2i}$ for some $\d \in \mK^*$.

(viii)\emph{ $\d = \l \mu^{-1} q^{-2i}$}: Applying the automorphism $\sigma$ to the relation $EY = X+ q^{-1}YE$ gives: $\sigma(EY)= \d K^{2i} E \cdot \mu K^{-i}Y = \d \mu q^{2i} K^i EY = \d \mu q^{2i} K^i (X + q^{-1}YE)$ and $\sigma(X+q^{-1}YE)= \l K^i X + q^{-1} \mu K^{-i} Y \cdot \d K^{2i} E = K^i (\l X + \d \mu q^{2i} q^{-1}YE)$. Therefore, $\d \mu q^{2i}= \l$, and the statement (viii) follows. The proof of the theorem is complete.
\end{proof}

\begin{corollary}\label{b27Feb15} 

\
\begin{enumerate}
\item The prime ideals $\CI:= \{ 0, (X), \,(\varphi), \, (Y), \, (E), \, (Y, E) \}$ are the only prime ideals of $\CA$ that are invariant under action of the group $G$ of automorphisms of $\CA$ (i.e., $G \gp = \{\gp \}$ for all $\gp \in \CI$).

\item If, in addition, $\mK$ is an algebraically closed field, then each of the three series of prime ideals in $\Spec(\CA)$ is a simple $G$-orbit. In particular, there are $9$ $G$-orbits in $\Spec(\CA)$.
\end{enumerate}
\end{corollary}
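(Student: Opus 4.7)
The plan is to read off everything from the explicit description of $G$ in Theorem \ref{B27Feb15}. First I would compute the $G$-action on the three ``parameter'' elements that label the non-trivial series of primes in (\ref{SpecCA}), namely $Z = \varphi Y K^{-1}$ (labelling the series above $(X)$), $C = XYK$ (labelling the series above $(\varphi)$) and $K$ itself (labelling the series above $(Y,E)$). Using the relations $K\varphi = q\varphi K$, $KX = qXK$, $KY = q^{-1}YK$, a short computation gives
\[
\sigma_{\l,\mu,\gamma,i}(Z) = \l\mu\gamma^{-1}q^i Z, \quad \sigma_{\l,\mu,\gamma,i}(C) = \l\mu\gamma q^i C, \quad \sigma_{\l,\mu,\gamma,i}(K) = \gamma K.
\]
In particular, the one-parameter family $\sigma_\gamma := \sigma_{1,1,\gamma,0}$ fixes each of $X, Y, E, \varphi$ while scaling $Z \mapsto \gamma^{-1}Z$, $C \mapsto \gamma C$ and $K \mapsto \gamma K$. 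That the six ideals in $\CI$ are $G$-invariant is then immediate, since every $\sigma_{\l,\mu,\gamma,i}$ sends each of $X, Y, E, \varphi$ to a unit multiple of itself and fixes $0$; hence the ideals they generate (together with $(Y,E)$) are preserved set-wise.

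For part 1 it remains to show that no prime outside $\CI$ is $G$-invariant. By Theorem \ref{A23Feb15} such a prime has one of the forms $(X, \gq)$, $(\varphi, \gr)$ or $(Y, E, \gp_0)$, where $\gq, \gr, \gp_0$ is a maximal ideal of $\mK[Z]$, $\mK[C]$ or $\mK[K^{\pm 1}]$ respectively, subject to $\gq \neq (Z)$ and $\gr \neq (C)$. I would apply $\sigma_\gamma$: since it fixes $X$, $\varphi$ and the ideal $(Y,E)$, its action reduces to the rescaling $Z \mapsto \gamma^{-1}Z$ (resp.\ $C \mapsto \gamma C$, $K \mapsto \gamma K$) on the labelling ring. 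Writing the maximal ideal as $(f)$ for a monic irreducible $f$ with $f(0) \neq 0$ (automatic because of the exclusions, together with $K$ being a unit in $\mK[K^{\pm 1}]$), invariance under \emph{all} $\gamma \in \mK^*$ would force every non-leading coefficient of $f$ to vanish, making $f$ a monomial and contradicting $f(0) \neq 0$. Thus the stabilizer is a proper subgroup of $\mK^*$, and since $q$ is not a root of unity the group $\mK^*$ is infinite, so some $\gamma$ moves the ideal.

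For part 2 assume $\mK$ is algebraically closed; the relevant maximal ideals are then $(Z-\a)$, $(C-\a)$, $(K-\a)$ with $\a \in \mK^*$ in each case. The formulas above specialise to
\[
\sigma_\gamma(X, Z-\a) = (X, Z - \gamma\a), \quad \sigma_\gamma(\varphi, C-\a) = (\varphi, C - \gamma^{-1}\a), \quad \sigma_\gamma(Y, E, K-\a) = (Y, E, K - \gamma^{-1}\a),
\]
so the action of $\{\sigma_\gamma\}$ on the parameter $\a$ is transitive on each series, giving a single $G$-orbit per series. That the three series and the six singletons in $\CI$ do not merge is automatic: since $G$ preserves the $G$-invariant ideals $(X), (\varphi), (Y,E)$, it also preserves the set of primes lying strictly above each. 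Counting gives $6 + 3 = 9$ orbits. The main obstacle is really just the scaling computation of the first paragraph together with the stabilizer/monomial argument of the second; once these are in hand the rest is direct bookkeeping and counting.
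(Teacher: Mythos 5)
Your proposal is correct and follows essentially the same route as the paper: compute the action of $\sigma_{\l,\mu,\gamma,i}$ on the ``labelling'' elements $Z$, $C$, $K$, deduce that the six primes in $\CI$ are fixed set-wise since each of $X, Y, E, \varphi$ is sent to a unit multiple of itself, and then use the scaling action to move every other prime. Your computations $\sigma(Z)=\l\mu\gamma^{-1}q^iZ$ and $\sigma(C)=\l\mu\gamma q^iC$ agree exactly with the paper's. One place where you go slightly further than the published proof: the paper's argument writes $\gp=(K-\alpha)$, $\gq=(Z-\beta)$, $\gr=(C-\delta)$ and declares the result ``obvious,'' which strictly covers only the algebraically closed case, whereas part 1 of the corollary is stated for general $\mK$; your monic-irreducible/monomial argument (using that $\mK$ is infinite because $q$ has infinite order) cleanly closes that small gap. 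In part 2, your observation that the three series cannot merge because membership in $\CP$, $\CQ$, $\CR$ is determined by which of the $G$-invariant primes $(X)$ and $(\varphi)$ a given maximal ideal contains is the right way to make the orbit count rigorous, and your transitivity computation on the parameter $\alpha$ via $\sigma_\gamma$ is correct.
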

\begin{proof}
By Theorem \ref{B27Feb15}, $G\gp = \{\gp \}$ for all $\gp \in \CI$. Let $\sigma= \sigma_{\l, \mu, \gamma, i}$, then $\sigma(Z)= \l \mu \gamma^{-1} q^i Z, \, \sigma(C)= \l \mu \gamma q^i C$ and $\sigma(K)=\gamma K$. Now the corollary is obvious since $\gp = (K-\alpha), \,\gq = (Z-\beta)$ and $\gr = (C- \gamma)$ for some $\alpha, \beta,\gamma \in \mK^*$.
\end{proof}

\section{Classification of simple unfaithful $\CA$-modules} \label{ClaANNz} 
The aim of this section is to classify all simple $\CA$-modules with non-zero annihilators. The set of simple $\CA$-modules are partitioned according to their annihilators which were described in Proposition \ref{a27Feb15}. For each primitive ideal $\gp$ of $\CA$, the factor algebra $\CA/\gp$ is described by Theorem \ref{A23Feb15}. They are of the type $R=D[t, t^{-1};\sigma]$ where $D$ is a commutative Dedekind domain and the automorphism $\sigma$ of $D$ are of two types (I) or (F), see below. For our purposes we need only (I) $D= \mK[H, H^{-1}]$, (F) $D=\mK[H]$ and in both cases $\sigma(H)=qH$ where $q \in \mK^*$ is not a root of 1.\\

\noindent\textbf{Classification of simple $R$-modules.}
Let us recall the classification of simple $R$-modules where $R=D[t,t^{-1};\sigma]$, $D$ is a commutative Dedekind domain and $\sigma$ satisfies one of the two conditions ((I) or (F), see below). These results are very particular cases of classification of simple modules over a GWA $D(\sigma, \,\,a)$ obtained in \cite{Bav-SimGWA-1992,Bav-GWArep,Bav-THM-GLGWA-1996,Bav-SimpModQuanPlane,Bav-OystGeneCrossPro,Bav-OreSim-1999,Bav-OystWit-Woron}.

Let $\CS=D\setminus \{0 \}$ and $k = \CS^{-1}D$ be the field of fractions of the ring $D$. The ring $R$ is a subring of $B:= k[t, t^{-1};\sigma]$. The ring $B= \CS^{-1}R$ is a (left and right) localization of $R$ at $\CS$. The ring $B$ is a Euclidean ring and so is a principle left and right ideal domain. Every simple $B$-module is isomorphic to $B/Bb$ for some \emph{irreducible} element $b \in B$ (i.e., $b=ac$ implies either $a$ or $c$ is a unit in $B$). Two simple $B$-modules $B/Ba$ and $B/Bb$ are isomorphic iff the irreducible elements $a$ and $b$ are \emph{similar}, i.e., there exists an element $c \in B$ such that $1$ is the greatest common right divisor of $b$ and $c$, and $ac$ is the least common left multiple of $b$ and $c$.

Let $G:= \langle \sigma \rangle$ be the subgroup of $\Aut(D)$ generated by $\sigma$. The group $G$ acts on the set $\Max(D)$ of maximal ideals of $D$. For each $\gp \in \Max(D), \mathscr{O}(\gp):= \{\sigma^i(\gp)\,|\,i \in \Z \}$ is the orbit of $\gp$. The set of all $G$-orbits in $\Max(D)$ is denoted by $\Max(D)/G$. The orbit $\mathscr{O}(\gp)$ is called an \emph{infinite} or \emph{linear} orbit if $|\mathscr{O}(\gp)|= \infty$; otherwise the orbit $\mathscr{O}(\gp)$ is called a \emph{finite} or \emph{cyclic} orbit. If $\mathscr{O}(\gp)$ is an infinite orbit then the map $\Z \rightarrow \mathscr{O}(\gp),\, i \mapsto \sigma^i(\gp)$, is a bijection, and we write $\sigma^i(\gp) \leqslant \sigma^j(\gp)$ if $i \leqslant j.$ So, the total ordering of $\Z$ is passed to $\mathscr{O}(\gp)$. This ordering does not depend on the choice of the ideal $\gp$ in the orbit $\mathscr{O}(\gp)$.

Given elements $\alpha, \beta \in D\setminus \{0 \},$ we write $\alpha < \beta$ if there are no maximal ideas $\gp$ and $\gq$ that belong to the same \emph{infinite} orbit and such that $\alpha \in \gp, \beta \in \gq$ and $\gp \geqslant \gq$. In particular, if $\alpha \in D^*$ is a unit of $D$ then $\alpha < \beta$ for all $\beta \in D\setminus \{0 \}$. The relation $<$ is not a partial order on $D \setminus \{0 \}$ as $1 < 1$. Clearly, $\alpha < \beta$ iff $\sigma^j(\alpha) < \sigma^j(\beta)$ for some/all $j \in \Z$. \\

\noindent
\textbf{Definition.} An element $b= t^m \beta_m + t^{m+1}\beta_{m+1} + \ldots + t^n \beta_n \in R$ where $\beta_i \in D, \, m<n$ and $\beta_m, \, \beta_n \neq 0$ is called an $l$-\emph{normal} element if $\beta_n < \beta_m.$\\

\noindent
\textbf{Definition.} We say that the automorphism $\sigma$ of $D$ is of (I)-\emph{type} automorphism, if all $G$-orbits in $\Max(D)$ are infinite. We say that the automorphism $\sigma$ of $D$ is of (F)-\emph{type} if there is a maximal ideal $\gp \in \Max(D)$ which is $\sigma$-invariant (i.e., $\sigma(\gp)=\gp$), $\{\gp\}$ is the only finite orbit and the automorphism $\bar{\sigma}: D/\gp \rightarrow D/\gp, \, d+\gp \mapsto \sigma(d)+\gp,$ is the identity automorphism (or equivalently, the factor ring $R/\gp = D/\gp[t, t^{-1};\bar{\sigma}]$ is a \emph{commutative} ring $D/\gp[t, t^{-1}]$). The ideal $\gp$ is called the \emph{exceptional} ideal of $D$.\\

\noindent
\textbf{Examples.}
1. Let $D= \mK[H, H^{-1}]$ and $\sigma(H)=qH$ where $q \in \mK^*$ is not a root of one. The automorphism $\sigma$ is of (I)-type.

2. Let $D= \mK[H]$ and $\sigma(H)=qH$ where $q \in \mK^*$ is not a root of one. The automorphism $\sigma$ is of (F)-type.

Let $\hat{R}$ be the set of isomorphism classes $[M]$ of simple $R$-modules $M$. Then
\begin{align*}
\hat{R}= \hat{R}(D{\rm\text{-}torsion})\,\, \sqcup \,\, \hat{R}(D{\rm\text{-}torsionfree})
\end{align*}
is a disjoint union where $\hat{R}(D{\rm\text{-}torsion}):= \{[M]\in \hat{R}\,|\, \CS^{-1}M =0 \}$ and $\hat{R}(D{\rm\text{-}torsionfree}):= \{ [M] \in \hat{R}\,|\, \CS^{-1}M \neq 0 \}$. An $R$-module $M$ is called a \emph{weight} $R$-module if $M$ is a semisimple $D$-module, i.e.,
$$M= \bigoplus_{\gp \in \Max(D)} M_{\gp}$$
where $M_{\gp}= \{ v \in M \,|\, \gp v =0 \}$ is called the \emph{component} of $M$ of \emph{weight} $\gp$. The set $\Supp(M)= \{ \gp \in \Max(D)\,|\, M_{\gp} \neq 0 \}$ is called the \emph{support} of the weight $\CA$-module $M$. It is also denoted $\Supp_D(M)$ and is called $D$-\emph{support} if we want to stress over which ring $D$ we consider weight modules.
Every weight $R$-module is $D$-torsion, but not vice versa. We denote by $\hat{R}$($D$-torsion, infinite) and $\hat{R}$($D$-torsion,  finite) the sets of isomorphism classes of simple, weight $R$-modules with infinite and finite support, respectively.

The next theorem classifies the simple, $D$-torsion $R$-modules.
\begin{theorem}  \label{A15Mar15} 
\begin{enumerate}
\item \cite{Bav-GlGWA-1996} Suppose that $\sigma$ is of (I)-type. Then the map
$$\Max(D)/G \longrightarrow \hat{R}(D{\rm\text{-}torsion}), \,\, \mathscr{O}(\gp) \mapsto R/R\gp,$$
is a bijection with the inverse $M \mapsto \Supp(M)$.
\item \cite{Bav-SimpModQuanPlane,Bav-OystGeneCrossPro} Suppose that $\sigma$ is of (F)-type. Then $\hat{R}${\rm ($D$-torsion)} $= \hat{R}${\rm ($D$-torsion, infinite)}\,\,$\sqcup\,\, \hat{R}${\rm ($D$-torsion, finite)} is a disjoint union where
\begin{enumerate}
\item the map $$\Big( \Max(D)\setminus \{ \gp \}\Big)/G \longrightarrow \hat{R}(D{\rm\text{-} torsion},\,\, {\rm infinite}), \,\, \mathscr{O}(\gp) \mapsto R/R\gp,$$ is a bijection with the inverse $M \mapsto \Supp(M)$, and
\item $\hat{R}${\rm ($D$-torsion, finite)} $= \widehat{R/(\gp)}= \widehat{D/\gp[t,t^{-1}]}.$
\end{enumerate}
\end{enumerate}
\end{theorem}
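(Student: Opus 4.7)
The plan is to handle both statements with a single structural backbone, then specialize. For any $D$-torsion $R$-module $M$ and any $\gp \in \Max(D)$, set $M_\gp := \{v \in M : \gp v = 0\}$. The commutation $td = \sigma(d) t$ gives $t^{\pm 1} M_\gp \subseteq M_{\sigma^{\pm 1}(\gp)}$, so for every $G$-orbit $\mathscr{O} \subseteq \Max(D)$ the sum $N_{\mathscr{O}} := \bigoplus_{\gp' \in \mathscr{O}} M_{\gp'}$ is an $R$-submodule of $M$. Since $D$ is Dedekind, every nonzero torsion $v \in M$ has a nonzero $D$-multiple in some socle $M_\gp$, so $N_{\mathscr{O}(\gp)} \neq 0$ for some $\gp$, and simplicity of $M$ forces $M = N_{\mathscr{O}(\gp)}$. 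In particular $M$ is a weight module supported on a single $G$-orbit, which I will use as the invariant for the inverse map $M \mapsto \Supp_D(M)$.

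For statement 1, fix $0 \neq v \in M_\gp$. Because $\sigma$ is of (I)-type, the primes $\sigma^i(\gp)$ for $i \in \Z$ are pairwise distinct, so $Rv \cap M_\gp = Dv$; simplicity of $M$ then forces $M_\gp = Dv$, as otherwise $Rv$ would be a proper submodule. Hence $\ann_R(v) \supseteq R\gp = \bigoplus_i \sigma^i(\gp) t^i$, producing a surjection $R/R\gp \twoheadrightarrow M$. To conclude $M \cong R/R\gp$ it suffices to show that $R/R\gp = \bigoplus_{i \in \Z} t^i(D/\gp)$ is simple: any nonzero element is a finite sum of terms lying in distinct weight spaces $\sigma^i(\gp)$, and the Chinese remainder theorem applied to these pairwise coprime maximal ideals extracts a single nonzero component $t^j \bar{c}$. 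Since $D/\sigma^j(\gp)$ is a field, the $D$-orbit of $t^j \bar{c}$ fills the weight space $t^j(D/\gp)$, and applying powers of $t$ recovers everything. Injectivity of the classification map is then immediate since $\Supp_D(R/R\gp) = \mathscr{O}(\gp)$.

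For statement 2, the same argument works verbatim for every orbit other than the exceptional one $\{\gp\}$, giving 2(a). For a simple $D$-torsion $M$ with $\Supp_D(M) = \{\gp\}$, the equality $\sigma(\gp) = \gp$ upgrades $M_\gp$ to an $R$-submodule of $M$; being nonzero, it equals $M$, i.e.\ $\gp M = 0$. So $M$ is naturally a simple module over $R/(\gp) = D/\gp[t,t^{-1};\bar\sigma] = D/\gp[t,t^{-1}]$ (the final equality uses $\bar\sigma = \id$, which is precisely the (F)-type hypothesis), and pull-back along $R \twoheadrightarrow R/(\gp)$ is a bijection between the two classes of simples, proving 2(b).

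The main obstacle is the opening step: upgrading the automatic primary decomposition $M = \bigoplus_{\gp'} M_{\gp'}^{\infty}$ (over a Dedekind domain) to the genuine weight decomposition $M = \bigoplus_{\gp'} M_{\gp'}$ that drives every subsequent argument. The crucial observation is that simplicity collapses the higher primary layers all at once, because the $R$-submodule generated by the socles is already nonzero as soon as any single socle is; once this is in place, the Chinese-remainder and twisted-$\sigma$ calculations are routine.
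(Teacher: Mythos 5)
Your proof is correct, and it is self-contained. Note that the paper itself gives no proof of this theorem: it explicitly states that these results are ``very particular cases of classification of simple modules over a GWA'' and cites \cite{Bav-GlGWA-1996}, \cite{Bav-SimpModQuanPlane}, \cite{Bav-OystGeneCrossPro}. Your argument is essentially the standard one from those references but specialized and stripped down to the skew Laurent case $R=D[t,t^{-1};\sigma]$ (a GWA with $a$ a unit, so no ``breaks'' appear), which makes it shorter and more elementary: the reduction to a weight module supported on a single orbit via the fact that the sum of socles over an orbit is a nonzero $R$-submodule, followed by the CRT extraction of a single weight component, replaces the general GWA machinery of linear/cyclic orbits and marked ideals. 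The route is the same in spirit; what you buy is that a reader does not need the cited papers.

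Two small points worth tightening. First, you should remark explicitly that the map $\mathscr{O}(\gp)\mapsto[R/R\gp]$ is well-defined, i.e.\ that $R/R\gp\simeq R/R\sigma^i(\gp)$ for all $i$; this follows from the right-multiplication map $r+R\gp\mapsto rt^{-i}+R\sigma^i(\gp)$, using $\gp t^{-i}=t^{-i}\sigma^i(\gp)$. Second, your notation $t^j\bar c$ vs.\ $\bar c\,t^j$ and ``$t^i(D/\gp)$'' for the weight space in $R/R\gp$ is a slight abuse: the $i$-th component is $Dt^i/\sigma^i(\gp)t^i\simeq D/\sigma^i(\gp)$, not literally $D/\gp$. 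This does not affect the argument (all these residue fields are abstractly isomorphic and CRT is applied to the pairwise distinct ideals $\sigma^i(\gp)$), but it is worth stating cleanly since the whole simplicity argument leans on which maximal ideal annihilates which component.
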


The next theorem is about classification of simple, $D$-torsionfree $R$-modules.
\begin{theorem}   \label{B15Mar15} 
\begin{enumerate}
\item \cite{Bav-GlGWA-1996} Suppose that $\sigma$ is of (I)-type. Then
$$\hat{R}(D{\rm\text{-}torsionfree})=\bigg\{ [R/R\cap Bb] \,|\, b \in R \,\,{\rm is \,\, an}\,\, l{\rm\text{-}normal},\, {\rm irreducible\,\, element\, of \,} B \bigg\}.$$
The simple $R$-modules $R/R \cap Bb$ and $R/R\cap Bb'$ are isomorphic iff the $B$-modules $B/Bb$ and $B/Bb'$ are isomorphic.

\item \cite{Bav-SimpModQuanPlane,Bav-OystGeneCrossPro} Suppose that $\sigma$ is of (F)-type. Then
\begin{align*}
\hat{R}(D{\rm\text{-}torsionfree}) = \bigg\{  & [R/R\cap Bb] \,|\,  b\in R \,\,{\rm is \,\, an}\,\, l{\rm\text{-}normal}, \,{\rm irreducible \,\,element \,\, of}\,\, B\,\, {\rm such \,\, that\,\,}\\  & R =R\gp+R\cap Bb \bigg\}.
\end{align*}
The simple $R$-modules $R/R\cap Bb$ and $R/R \cap Bb'$ are isomorphic iff the $B$-modules $B/Bb$ and $B/Bb'$ are isomorphic.
\end{enumerate}
\end{theorem}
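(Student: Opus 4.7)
The plan is to establish a correspondence between simple $D$-torsionfree $R$-modules and certain irreducible elements of $B$ via the exact localization functor $\CS^{-1}(-)$. If $M$ is a simple $D$-torsionfree $R$-module, then the canonical map $M \to \CS^{-1}M$ is injective, and $\CS^{-1}M$ is a simple $B$-module: any nonzero $B$-submodule $N$ of $\CS^{-1}M$ meets $M$ nontrivially (clear denominators from a nonzero element of $N$), and $N \cap M$ is then a nonzero $R$-submodule of $M$, hence equals $M$ by simplicity, whence $N = \CS^{-1}M$. Since $B$ is a principal left ideal domain, $\CS^{-1}M \simeq B/Bb$ for some irreducible $b \in B$, which, after multiplying by a suitable element of $k^* t^{\Z}$ (this does not change $Bb$), may be taken in $R$. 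Then $M$ is identified with the cyclic $R$-submodule $R\bar 1 \subseteq B/Bb$, that is, $M \simeq R/(R \cap Bb)$.

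This construction immediately gives the isomorphism criterion $R/(R \cap Bb) \simeq R/(R \cap Bb')$ iff $B/Bb \simeq B/Bb'$: the forward direction is localization, while the reverse direction uses that any $B$-isomorphism $B/Bb \to B/Bb'$ sends cyclic generators to cyclic generators, and the $R$-submodule generated by any cyclic generator of $B/Bb$ is an invariant of the $B$-module isomorphism class.

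The remaining and most delicate point is to determine for which $b$ the $R$-module $R/(R \cap Bb)$ is actually simple. A proper $R$-submodule $N$ is either $D$-torsion (if $\CS^{-1}N = 0$) or becomes all of $B/Bb$ after localization. In either case $N$ corresponds to a left ideal $L$ of $R$ with $R \cap Bb \subsetneq L \subsetneq R$; localizing, $BL$ strictly contains $Bb$, and since $Bb$ is a maximal left ideal of the principal left ideal domain $B$, we get $BL = B$, so $L \cap \CS \neq \emptyset$. Using the left division algorithm in the Euclidean ring $B = k[t, t^{-1};\sigma]$, the existence of such an $L$ translates into the existence of an element of $Bb \cap R$ with strictly shorter $t$-span than $b$ after suitable normalization of leading coefficients. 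A leading-term analysis that tracks the action of $\sigma$ along infinite orbits in $\Max(D)$ shows that such a reduction is possible exactly when $b$ fails to be $l$-normal: the relation $\beta_n < \beta_m$ says that no prime in $\Supp(\beta_n)$ lies $\geq$ a prime in $\Supp(\beta_m)$ along a common infinite orbit, which is precisely what blocks any attempted shortening. This yields statement (1) in the (I)-type case.

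In the (F)-type case the exceptional $\sigma$-invariant ideal $\gp$ creates an additional possibility: $R/(R \cap Bb)$ may satisfy $\gp \cdot \bar 1 = 0$, in which case it descends to a simple module over the commutative ring $R/R\gp = D/\gp[t, t^{-1}]$; since $\gp \cap \CS \neq \emptyset$, such a module is $D$-torsion and has already been classified in Theorem \ref{A15Mar15}(2)(b). To exclude this one requires $\gp \cdot \bar 1 \neq 0$, which, combined with simplicity of $R \bar 1$, forces $R\gp \bar 1 = R\bar 1$, equivalently $R = R\gp + R \cap Bb$; this is the additional condition in statement (2). The main technical obstacle throughout is the leading-coefficient bookkeeping behind the $l$-normal characterization in Step~3: translating ``no shortening is possible in $R \cap Bb$'' into the precise combinatorial condition on orbits of maximal ideals requires a delicate induction using the $\sigma$-action, in the spirit of the GWA arguments of \cite{Bav-GlGWA-1996, Bav-SimpModQuanPlane, Bav-OystGeneCrossPro}.
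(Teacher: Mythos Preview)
The paper does not prove this theorem at all: it is stated as a recalled result from earlier work, with citations to \cite{Bav-GlGWA-1996} for part~(1) and \cite{Bav-SimpModQuanPlane,Bav-OystGeneCrossPro} for part~(2), and no argument is supplied in the present paper. So there is no in-paper proof to compare your attempt against.

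That said, your outline is broadly in the spirit of the arguments in the cited references, but two points deserve comment. First, your justification of the reverse implication in the isomorphism criterion is not quite right as written: a $B$-isomorphism $\phi\colon B/Bb \to B/Bb'$ sends $\bar 1$ to some nonzero $v' \in B/Bb'$, and you get $R/(R\cap Bb) \simeq Rv'$; but $Rv'$ need not equal $R\bar 1' = R/(R\cap Bb')$ on the nose, and ``the $R$-submodule generated by any cyclic generator is an invariant'' is not obviously true. The actual argument in the references goes the other way: one shows that \emph{every} simple $D$-torsionfree $R$-module arises as $R/(R\cap Bb)$ for an $l$-normal irreducible $b$, and that the similarity class of $b$ is recovered from the localization; injectivity of $[M]\mapsto [\CS^{-1}M]$ then gives the reverse implication. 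Second, your treatment of the $l$-normal condition is essentially a pointer to the literature rather than a proof; the combinatorial induction that converts ``no shortening possible'' into $\beta_n < \beta_m$ is the substantive content of the cited papers and is not something you have supplied here. Given that the present paper itself only cites the result, this is not a defect relative to the paper, but you should be aware that your sketch does not constitute an independent proof.
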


\noindent\textbf{Classification of simple $\CA$-modules with non-zero annihilators.}
The set $\hat{\CA}$ is a disjoint union
\begin{align}
\hat{\CA} = \bigsqcup_{\gp \in \Prim(\CA)} \hat{\CA}\,(\gp) \label{AAPU} 
\end{align}
where $\hat{\CA}\,(\gp):= \{[M] \in \hat{\CA}\,|\, \ann_{\CA}(M)=\gp \}$. Below, a description of the set $\hat{\CA}\,(\gp)$ is given for each nonzero primitive ideal $\gp \in \Prim(\CA)$. We use the description of $\Prim\,(\CA)$ (Proposition \ref{a27Feb15}), and so we have to consider case (a)--(e), see below.

\begin{enumerate}[label=(a)]
\item Let $\gp \in \Spec\,(\mK[K^{\pm 1}])\setminus \{0 \}$ and $P = (Y, E, \gp) \in \CP$. Then
\begin{align}
\hat{\CA}\,\Big ((Y,E,\gp)\Big)= \{ \CA/(Y, E, \gp) \simeq \mK[K^{\pm 1}]/\gp \}. \label{AAPU1} 
\end{align}
\end{enumerate}

\begin{enumerate}[label=(b)]
\item Let $\gp = (Y)$. Then $\CA/(Y)\simeq U= \mK[E][K^{\pm 1};\sigma]$ where $\sigma(E)=q^{2}E$ (Theorem \ref{A23Feb15}.(2)) and the ideal $(E)$ is the only height $1$ prime ideal of the algebra $U$ (see (\ref{SpecCA})). The element $E$ is a normal element of the algebra $U$. The algebra $U$ is the ring of (F)-type, where $(E)$ is the exceptional ideal of $\mK[E]$.
\end{enumerate}

\begin{lemma}\label{a15Mar15}
$\hat{\CA}\,\Big((Y)\Big)= \hat{U}$\,{\rm ($\mK[E]$\text{-}torsion, infinite)}\,\,$\sqcup \,\, \hat{U}$\,{\rm ($\mK[E]$\text{-}torsionfree)} (see Theorem \ref{A15Mar15}.(2)) and Theorem \ref{B15Mar15}.(2)).
\end{lemma}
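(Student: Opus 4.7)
The plan is to identify simple $\CA$-modules with annihilator exactly $(Y)$ with simple $U$-modules having zero annihilator, and then use Theorems \ref{A15Mar15}.(2) and \ref{B15Mar15}.(2) to read off the isomorphism classes.

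First I would note that simple $\CA$-modules $M$ with $\ann_\CA(M) \supseteq (Y)$ are in bijection with simple $U$-modules, via the canonical projection $\CA \twoheadrightarrow \CA/(Y) \simeq U$; under this correspondence $\ann_\CA(M) = (Y)$ if and only if the annihilator of $M$ as a $U$-module is zero. So the task reduces to classifying faithful simple $U$-modules.

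Next I would invoke the description of $\Spec(U)$. From Theorem \ref{A23Feb15} and Proposition \ref{aA12Mar15} applied to the normal element $E$ of $U$, one has
\begin{equation*}
\Spec(U) \;=\; \{0\} \;\sqcup\; \{(E)\} \;\sqcup\; \bigl\{(E,\gp)\,\big|\,\gp\in\Max(\mK[K^{\pm 1}])\bigr\},
\end{equation*}
because $U_E\simeq \mK[E^{\pm 1}][K^{\pm 1};\sigma]$ is a quantum torus (hence central simple) and $U/(E)\simeq \mK[K^{\pm 1}]$. Thus every nonzero prime of $U$ contains $E$, and in particular the annihilator of a simple $U$-module is nonzero if and only if $E$ acts as zero on it.

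Now I would partition $\hat U$ according to Theorem \ref{A15Mar15}.(2) into (i) $\hat U(\mK[E]\text{-torsion, finite})$, (ii) $\hat U(\mK[E]\text{-torsion, infinite})$, and (iii) $\hat U(\mK[E]\text{-torsionfree})$. Class (i) consists, by Theorem \ref{A15Mar15}.(2)(b), of the simple modules over $U/(E)\simeq\mK[K^{\pm 1}]$, on which $E$ acts as zero; these have $\ann_U\supseteq(E)$, so their pullback annihilators in $\CA$ are exactly the maximal ideals $(Y,E,\gp)\in\CP$, not $(Y)$. For class (ii), each module is of the form $U/U\gp$ with $\gp\in\Max(\mK[E])\setminus\{(E)\}$; since $E\notin\gp$ and, by $\sigma$-invariance of $E$, $E\notin\sigma^i(\gp)$ for any $i$, the element $E$ acts injectively (in fact invertibly) on the weight components, so $\ann_U(M)$ does not contain $E$ and hence must be zero by the description of $\Spec(U)$. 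For class (iii), $\mK[E]$-torsionfreeness means that every nonzero $d\in\mK[E]$ acts injectively on $M$, so in particular $E$ acts injectively and again $\ann_U(M)=0$.

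Putting these together gives $\hat{\CA}((Y)) = \hat U(\mK[E]\text{-torsion, infinite}) \sqcup \hat U(\mK[E]\text{-torsionfree})$, and the explicit parametrizations of these two sets are supplied by Theorem \ref{A15Mar15}.(2)(a) and Theorem \ref{B15Mar15}.(2) respectively. The main (though very mild) obstacle is verifying that the (F)-type hypothesis is genuinely met here, i.e.\ that $(E)$ is $\sigma$-invariant, is the unique finite orbit in $\Max(\mK[E])$, and that $\bar\sigma$ is the identity on $\mK[E]/(E)=\mK$; all three follow immediately from $\sigma(E)=q^2E$ together with the fact that $q\in\mK^*$ is not a root of unity.
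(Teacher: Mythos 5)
Your proof is correct and follows essentially the same route as the paper: reduce to classifying simple $U$-modules via $\CA/(Y)\simeq U$ and Theorems \ref{A15Mar15}.(2), \ref{B15Mar15}.(2), then discard the $\mK[E]$-torsion, finite piece because those modules have annihilator strictly larger than $(Y)$. The only difference is that you spell out the justification for the last step (that the infinite-torsion and torsionfree modules have $\ann_U=0$) via $\Spec(U)$ and injectivity of $E$, which the paper leaves implicit.
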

\begin{proof*}
The result follows from Theorem \ref{A15Mar15}.(2), Theorem \ref{B15Mar15}.(2) and the fact that each simple module of the set $\hat{U}$\,($\mK[E]$-torsion, finite) has annihilator strictly larger than $(Y)$.
\end{proof*}

\begin{enumerate}[label=(c)]
\item Let $\gp =(E)$. Then $\CA/(E) \simeq \mK[Y][K^{\pm 1};\tau]$ where $\tau(Y) =q^{-1}Y$ (Theorem \ref{A23Feb15}.(3)). The automorphism $\tau$ of $\mK[Y]$ is of (F)-type where $(Y)$ is the exceptional ideal of $\mK[Y]$.
\end{enumerate}

\begin{lemma}\label{b15Mar15} 
Let $R= \mK[Y][K^{\pm 1};\tau]$ where $\tau(Y)= q^{-1}Y$. Then $\hat{\CA}\, \Big((E)\Big) = \hat{R}$\,{\rm ($\mK[Y]$-torsion, infinite)} $\sqcup\,\, \hat{R}$\,{\rm ($\mK[Y]$-torsionfree)} (see Theorem \ref{A15Mar15}.(2) and Theorem \ref{B15Mar15}.(2)).
\end{lemma}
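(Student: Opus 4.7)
The plan is to mirror the short argument used for Lemma \ref{a15Mar15}. The projection $\pi:\CA\to R:=\CA/(E)\simeq \mK[Y][K^{\pm 1};\tau]$ induces a bijection between isomorphism classes of simple $\CA$-modules annihilated by $(E)$ and isomorphism classes of simple $R$-modules, and under this bijection $\hat{\CA}\bigl((E)\bigr)$ corresponds to the subset of $\hat R$ consisting of simple modules whose $R$-annihilator is zero (equivalently, whose $\CA$-annihilator is exactly $(E)$ rather than strictly larger). So the task reduces to identifying which classes in $\hat R$ are $R$-faithful.

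First I would verify that the automorphism $\tau$ of $D:=\mK[Y]$ is of (F)-type with exceptional ideal $(Y)$: since $\tau(Y)=q^{-1}Y$, the ideal $(Y)$ is $\tau$-invariant and $\bar\tau$ is the identity on $D/(Y)=\mK$; every other maximal ideal $(Y-\alpha)$ with $\alpha\in\mK^*$ is sent to $(Y-q\alpha)$, and since $q$ is not a root of unity its $\tau$-orbit is infinite. Hence Theorem \ref{A15Mar15}.(2) and Theorem \ref{B15Mar15}.(2) apply and partition $\hat R$ into three classes: $\hat R(D\text{-torsion, infinite})$, $\hat R(D\text{-torsion, finite})$ and $\hat R(D\text{-torsionfree})$.

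Next I would dispose of the finite-support class: by Theorem \ref{A15Mar15}.(2)(b), every such simple module is a simple module over $R/(Y)=\mK[K^{\pm 1}]$, so its $R$-annihilator contains $(Y)$, which lifts to an ideal containing $(Y,E)\supsetneq (E)$ in $\CA$. Hence these are excluded from $\hat{\CA}\bigl((E)\bigr)$. For the remaining two classes I would argue $R$-faithfulness directly: for $M=R/R\gq$ with $\gq\in\Max(D)\setminus\{(Y)\}$ lying in an infinite $\tau$-orbit $\OO(\gq)$, the $D$-support of $M$ equals the whole orbit $\OO(\gq)$, so any nonzero ideal of $R$ lying in $\ann_R(M)$ would be contained in $\bigcap_{n\in\Z}\tau^n(\gq)\cdot R=0$, giving $\ann_R(M)=0$; for $M$ in $\hat R(D\text{-torsionfree})$, the localisation $\CS^{-1}M$ (with $\CS=D\setminus\{0\}$) is a nonzero simple module over the simple Euclidean ring $B=\mK(Y)[K^{\pm 1};\tau]$, so any nonzero element of $\ann_R(M)$ would become a zero in $B$ acting nontrivially on $\CS^{-1}M$, which is impossible since $R\hookrightarrow B$.

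The combinatorial bookkeeping is routine; the only mildly delicate step is the faithfulness statement for the $D$-torsion infinite-support class, but this is standard for weight modules over skew Laurent extensions of Dedekind domains and is implicit in the GWA classifications cited in Theorem \ref{A15Mar15}. Assembling the three observations yields the desired equality
\[
\hat{\CA}\bigl((E)\bigr)=\hat R(\mK[Y]\text{-torsion, infinite})\ \sqcup\ \hat R(\mK[Y]\text{-torsionfree}),
\]
with the two summands classified explicitly by Theorem \ref{A15Mar15}.(2) and Theorem \ref{B15Mar15}.(2) respectively.
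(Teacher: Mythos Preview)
Your proof is correct and follows essentially the same approach as the paper: identify $\hat{\CA}\bigl((E)\bigr)$ with the faithful simple $R$-modules, apply the (F)-type classification of Theorems \ref{A15Mar15}.(2) and \ref{B15Mar15}.(2), and discard the finite-support class because its members have annihilator strictly containing $(E)$. The only difference is that the paper leaves the faithfulness of the infinite-support torsion and torsionfree classes implicit (it follows immediately from the prime spectrum diagram (\ref{SpecCA}), since every nonzero prime of $R\simeq\CA/(E)$ contains $Y$), whereas you supply direct arguments; this is added detail rather than a different route.
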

\begin{proof*}
The result follows from Theorem \ref{A15Mar15}.(2), Theorem \ref{B15Mar15}.(2) and the fact that each simple module of the set $\hat{R}$\,($\mK[Y]$-torsion, finite) has annihilator strictly larger than $(E)$.
\end{proof*}

\begin{enumerate}[label=(d)]
\item Let $\gp = (X, \gq)$ where $\gq \in \Max\,(\mK[Z])\setminus \{(Z) \}$. By Theorem \ref{A23Feb15}.(4), $\CA/(X, \gq) \simeq L_{\gq}[Y^{\pm 1}][K^{\pm 1};\tau]$ where $\tau(Y)=q^{-1}Y$ and so the automorphism $\tau$ is of (I)-type.
\end{enumerate}

\begin{lemma}\label{c15Mar15} 
Let $R= L_{\gq}[Y^{\pm 1}][K^{\pm 1};\tau]$ where $\tau(Y)=q^{-1}Y$. Then $\hat{\CA}\,\Big((X, \gq)\Big)= \hat{R}$.
\end{lemma}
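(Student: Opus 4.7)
My plan is to reduce the lemma to the simplicity of the factor algebra $R \simeq \CA/(X, \gq)$ established in Theorem \ref{A23Feb15}.(4). First, I will invoke the standard correspondence: any simple $\CA$-module whose annihilator contains $(X, \gq)$ descends uniquely to a simple module over $\CA/(X, \gq)$, and conversely every simple module over $\CA/(X, \gq)$ lifts to a simple $\CA$-module annihilated by $(X, \gq)$. Under the isomorphism $\CA/(X, \gq) \simeq R$ of Theorem \ref{A23Feb15}.(4), this yields a bijection
\[
\bigl\{[M] \in \hat{\CA} \,\big|\, \ann_\CA(M) \supseteq (X, \gq)\bigr\} \;\longleftrightarrow\; \hat{R}.
\]

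Second, I will upgrade the containment ``$\supseteq$'' to equality. By Theorem \ref{A23Feb15}.(4)(b), the factor algebra $R \simeq L_\gq \otimes \mathbb{Y}$ is a \emph{simple} ring. Thus for any simple $R$-module $M$, the two-sided ideal $\ann_R(M)$ is a proper ideal of $R$, hence $\ann_R(M) = 0$. Pulling back along the projection $\pi \colon \CA \twoheadrightarrow R$ gives $\ann_\CA(M) = \pi^{-1}(0) = (X, \gq)$, so $[M] \in \hat{\CA}\bigl((X, \gq)\bigr)$. Combined with the obvious reverse inclusion (every element of $\hat{\CA}((X, \gq))$ has annihilator containing $(X, \gq)$, and so lies in the left-hand side of the bijection above), this proves the two sets coincide.

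There is essentially no real obstacle here: the content is front-loaded into Theorem \ref{A23Feb15}, which does both the identification of the factor algebra as $R = L_\gq[Y^{\pm 1}][K^{\pm 1};\tau]$ and the verification that $R$ is simple. For completeness, the description of $\hat{R}$ itself is then delivered by Theorem \ref{A15Mar15}.(1) and Theorem \ref{B15Mar15}.(1), applicable because $\tau(Y) = q^{-1}Y$ with $q$ not a root of unity makes every $G$-orbit on $\Max\bigl(L_\gq[Y^{\pm 1}]\bigr)$ infinite, so $\tau$ is of (I)-type. By contrast, in Lemmas \ref{a15Mar15} and \ref{b15Mar15} the factor algebras $\CA/(Y)$ and $\CA/(E)$ are \emph{not} simple, so there one must additionally discard the finite-orbit weight modules whose annihilators properly contain the ideal; this complication does not arise in the present case.
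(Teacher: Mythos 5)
Your argument is correct, and it supplies exactly the reasoning the paper leaves implicit: Lemma \ref{c15Mar15} (like Lemma \ref{d15Mar15}) is stated in the paper with no proof, because it is regarded as immediate once one knows that $\CA/(X,\gq)\simeq L_\gq\otimes\mathbb{Y}\simeq R$ is a \emph{simple} domain (Theorem \ref{A23Feb15}.(4)(b)). Under the bijection between simple $R$-modules and simple $\CA$-modules annihilated by $(X,\gq)$, simplicity of $R$ forces the $\CA$-annihilator to be exactly $(X,\gq)$ rather than possibly larger, which is precisely what distinguishes this case from Lemmas \ref{a15Mar15} and \ref{b15Mar15} where the factor algebras $U$ and $\mK[Y][K^{\pm1};\tau]$ are not simple and finite-orbit modules must be discarded. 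Your concluding remark that Theorems \ref{A15Mar15}.(1) and \ref{B15Mar15}.(1) then describe $\hat{R}$, because $\tau$ is of (I)-type on $L_\gq[Y^{\pm1}]$, is also exactly what the paper intends.
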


\begin{enumerate}[label=(e)]
\item  Let $\gp = (\varphi, \gr)$ where $\gr \in \Max\,(\mK[C])\setminus \{(C) \}$. By Theorem \ref{A23Feb15}.(5), $\CA/(\varphi, \gr)\simeq L_{\gr}[Y^{\pm 1}][K^{\pm 1};\tau]$ where $\tau(Y)= q^{-1}Y$ and so the automorphism $\tau$ is of (I)-type.
\end{enumerate}

\begin{lemma}\label{d15Mar15} 
Let $R= L_{\gr}[Y^{\pm 1}][K^{\pm 1};\tau]$ where $\tau(Y)=q^{-1}Y$. Then $\hat{\CA}\,\Big((\varphi, \gr)\Big) = \hat{R}.$
\end{lemma}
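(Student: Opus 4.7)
The plan is to identify $\hat{\CA}\,((\varphi, \gr))$ with $\hat{R}$ via the canonical quotient map $\pi : \CA \twoheadrightarrow \CA/(\varphi, \gr) \simeq R$. The key input is that $(\varphi, \gr)$ is a maximal ideal of $\CA$ by Corollary \ref{a24Feb15}, and that the factor algebra $\CA/(\varphi, \gr) \simeq L_{\gr} \otimes \mathbb{Y} \simeq R$ is a simple algebra by Theorem \ref{A23Feb15}(5)(b). This mirrors verbatim the situation of Lemma \ref{c15Mar15}, with $\gq$ replaced by $\gr$ and the central normal element $Z$ replaced by $C$.

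Pullback along $\pi$ gives a map $\hat{R} \to \hat{\CA}$, $[N] \mapsto [\pi^{*}N]$. For any simple $R$-module $N$, the pullback $\pi^{*}N$ is clearly a simple $\CA$-module, and its $\CA$-annihilator contains $(\varphi, \gr)$. Since $(\varphi, \gr)$ is maximal (Corollary \ref{a24Feb15}) and $\pi^{*}N$ is nonzero, $\ann_{\CA}(\pi^{*}N) = (\varphi, \gr)$. Conversely, any simple $\CA$-module $M$ with $\ann_{\CA}(M) = (\varphi, \gr)$ inherits a well-defined $R$-module structure, and the $\CA$-submodules and $R$-submodules of $M$ coincide, so $M$ is simple as an $R$-module as well. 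The two constructions are mutually inverse on isomorphism classes, yielding $\hat{\CA}\,((\varphi, \gr)) = \hat{R}$.

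No step is genuinely an obstacle: the entire content has been prepackaged in Theorem \ref{A23Feb15}(5)(b) (simplicity of the factor) and Corollary \ref{a24Feb15} (maximality of $(\varphi, \gr)$). As an optional afterword one can describe $\hat{R}$ explicitly via Theorem \ref{A15Mar15}(1) and Theorem \ref{B15Mar15}(1): the automorphism $\tau(Y) = q^{-1}Y$ of $L_{\gr}[Y^{\pm 1}]$ has only infinite orbits on $\Max(L_{\gr}[Y^{\pm 1}])$ because $q$ is not a root of unity, so $\tau$ is of (I)-type and the corresponding classifications apply verbatim.
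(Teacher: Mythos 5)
Your proposal is correct, and it matches the paper's intent: the paper states Lemma \ref{d15Mar15} (as well as the parallel Lemma \ref{c15Mar15}) without proof precisely because, once one knows $(\varphi,\gr)$ is maximal (Corollary \ref{a24Feb15}) and $\CA/(\varphi,\gr)\simeq R$ is a simple domain (Theorem \ref{A23Feb15}.(5)(b)), the identification of $\hat{\CA}\,\big((\varphi,\gr)\big)$ with $\hat R$ via pullback along the quotient map is immediate. Your explicit unpacking — pulled-back simple $R$-modules have $\CA$-annihilator exactly the maximal ideal $(\varphi,\gr)$, and conversely simple $\CA$-modules annihilated by $(\varphi,\gr)$ are simple $R$-modules — is exactly the standard argument the paper is tacitly invoking.
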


Let $\hat{\CA}$\,(fin. dim.) be the set of isomorphism classes of simple finite dimensional $\CA$-modules.
\begin{theorem}\label{A23Mar15} 
\begin{align*}
 \hat{\CA}\,{\rm (fin. dim.)} = \Big\{ \CA/(Y, E, \gp) \simeq \mK[K^{\pm 1}]/\gp \,|\, \gp \in \Max\,(\mK[K^{\pm 1}])  \Big\} = \bigsqcup_{P \in \CP}\hat{\CA}\,(P).
\end{align*}
\end{theorem}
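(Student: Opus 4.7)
The plan is to reduce the classification of finite-dimensional simple $\CA$-modules to a check on primitive ideals: a finite-dimensional simple $\CA$-module $M$ has annihilator $\ga := \ann_{\CA}(M)$ that is primitive, and the quotient $\CA/\ga$ embeds into $\End_{\mK}(M)$, so $\CA/\ga$ is finite-dimensional over $\mK$. Therefore the map $[M]\mapsto \ann_{\CA}(M)$ sends $\hat{\CA}\,(\text{fin. dim.})$ into the set $\Pi := \{\ga \in \Prim(\CA)\mid \dim_{\mK}\CA/\ga < \infty\}$, and the first task is to identify $\Pi$ exactly.

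Using Proposition \ref{a27Feb15}, I would go through the list $\Prim(\CA) = \Max(\CA)\sqcup\{(Y),(E),0\}$ and eliminate every primitive ideal whose factor algebra is infinite-dimensional using Theorem \ref{A23Feb15}. Namely, $\CA/0=\CA$ has Gelfand-Kirillov dimension $4$; $\CA/(Y)\simeq\mK[E][K^{\pm 1};\tau]$ and $\CA/(E)\simeq\mK[Y][K^{\pm 1};\tau]$ are infinite-dimensional iterated skew extensions; for $\gq\in\Max(\mK[Z])\setminus\{(Z)\}$ and $\gr\in\Max(\mK[C])\setminus\{(C)\}$, the factor algebras $\CA/(X,\gq)\simeq L_{\gq}\otimes\mathbb{Y}$ and $\CA/(\varphi,\gr)\simeq L_{\gr}\otimes\mathbb{Y}$ contain the infinite-dimensional quantum torus $\mathbb{Y}=\mK[Y^{\pm 1}][K^{\pm 1};\tau]$. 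So the only primitive ideals left are the maximal ideals of type $\CP$, namely $(Y,E,\gp)$ with $\gp\in\Max(\mK[K^{\pm 1}])$, giving $\Pi = \CP$.

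For each $\gp\in\Max(\mK[K^{\pm 1}])$, the factor algebra $\CA/(Y,E,\gp)\simeq\mK[K^{\pm 1}]/\gp$ is a finite field extension of $\mK$ (Theorem \ref{A23Feb15}.(1)), hence admits a unique simple module up to isomorphism, namely itself. This confirms the description $\hat{\CA}\,((Y,E,\gp))=\{\CA/(Y,E,\gp)\simeq\mK[K^{\pm 1}]/\gp\}$ recorded in (\ref{AAPU1}), and combined with the decomposition (\ref{AAPU}) it yields
\[
\hat{\CA}\,(\text{fin. dim.})\;=\;\bigsqcup_{P\in\CP}\hat{\CA}\,(P)\;=\;\Big\{\CA/(Y,E,\gp)\simeq\mK[K^{\pm 1}]/\gp\mid \gp\in\Max(\mK[K^{\pm 1}])\Big\}.
\]

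There is essentially no real obstacle; the argument is bookkeeping against Theorem \ref{A23Feb15}. The only step requiring a brief justification is the elimination of each non-candidate primitive ideal: in every case one exhibits either a commutative polynomial/Laurent polynomial subalgebra or the infinite-dimensional factor $\mathbb{Y}$ inside the quotient, which forces $\dim_{\mK}\CA/\ga=\infty$.
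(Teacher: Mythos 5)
Your proof is correct and takes essentially the same approach as the paper: observe that a finite-dimensional simple module over the infinite-dimensional algebra $\CA$ must have nonzero annihilator, then pin down which primitive ideals can occur. Your version is a touch more streamlined than the paper's — rather than invoking the full case-by-case classification of simple unfaithful modules (items (a)–(e)) and inspecting which members are finite-dimensional, you pass directly through the embedding $\CA/\ann_{\CA}(M)\hookrightarrow \End_{\mK}(M)$ to conclude that the annihilator must lie in $\Pi=\{\ga\in\Prim(\CA):\dim_{\mK}\CA/\ga<\infty\}$, and then identify $\Pi=\CP$ by reading off the prime factor algebras from Theorem \ref{A23Feb15}. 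Both arguments hinge on the same inputs (Proposition \ref{a27Feb15} and Theorem \ref{A23Feb15}), so this is a cosmetic rather than substantive difference.
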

\begin{proof}
A finite dimensional module over an infinite dimensional algebra has necessarily nonzero annihilator. Among the simple modules with nonzero annihilators, the modules described in (\ref{AAPU1}) are the only finite dimensional ones.
\end{proof}

For a module $M$, $\GK(M)$ is its \emph{Gelfand-Kirillov dimension}.
\begin{corollary}\label{a29Mar15} 
The Gelfand-Kirillov dimension of all simple, unfaithful, infinite dimensional $\CA$-modules is $1$.
\end{corollary}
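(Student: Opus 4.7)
By Proposition \ref{a27Feb15} and Theorem \ref{A23Mar15}, an infinite-dimensional simple unfaithful $\CA$-module $M$ must have annihilator in the set $\{(Y),\, (E),\, (X,\gq),\, (\varphi,\gr)\}$, since the only other nonzero primitive ideals $(Y,E,\gp)$ give finite-dimensional simples. My plan is to reduce, case by case, to computing the Gelfand-Kirillov dimension of a simple faithful module over the quotient $R := \CA/\ann_\CA(M)$. In each of the four cases, Theorem \ref{A23Feb15} presents $R$ as a skew Laurent polynomial ring $D[t^{\pm 1};\sigma]$ with $D \in \{\mK[E],\,\mK[Y],\,L_\gq[Y^{\pm 1}],\,L_\gr[Y^{\pm 1}]\}$, a commutative Dedekind $\mK$-algebra of Gelfand-Kirillov dimension $1$, so that $\GK(R)=2$. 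Since the standard filtration on $\CA$ descends to $R$, we have $\GK_\CA(M)=\GK_R(M)$, and it suffices to bound the latter by $1$.

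I would then appeal to the explicit classifications in Theorems \ref{A15Mar15} and \ref{B15Mar15}. For the $D$-torsion simples $M\cong R/R\gp_0$ (where $\gp_0$ lies in an infinite $\sigma$-orbit in $\Max(D)$), $M$ decomposes as a direct sum $\bigoplus_{i\in\Z} M_{\sigma^i(\gp_0)}$ of weight spaces, each isomorphic to $D/\gp_0$. By Hilbert's Nullstellensatz applied to the finitely generated commutative $\mK$-algebra $D$, the residue field $D/\gp_0$ is a \emph{finite} extension of $\mK$. Choosing a finite-dimensional generating subspace $V\subset R$ containing $t$, $t^{-1}$, $1$ and a set of $D$-generators, and letting $\bar{1}\in M$ be the image of $1$, one has $V^n\cdot\bar{1}\subseteq \bigoplus_{|i|\leq n} M_{\sigma^i(\gp_0)}$, giving $\dim_\mK V^n\bar{1}\leq (2n+1)[D/\gp_0:\mK]=O(n)$ and hence $\GK(M)\leq 1$.

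For the $D$-torsionfree case (Theorem \ref{B15Mar15}), $M$ embeds as an $R$-submodule of $N:=B/Bb$, where $B=\mathrm{Frac}(D)[t^{\pm 1};\sigma]$ and $b\in R$ is an $l$-normal irreducible element of $t$-degree $d$. As a $\mathrm{Frac}(D)$-vector space, $N$ has basis $\bar{1},\,t\bar{1},\,\ldots,\,t^{d-1}\bar{1}$, and the relation $b\bar{1}=0$ reduces any $t^i\bar{1}$ with $i\geq d$ or $i<0$ to a $\mathrm{Frac}(D)$-combination of this basis. To carry out the filtration estimate one must track the inflation of denominators (i.e.\ $\sigma$-twisted powers of the leading and trailing coefficients $\beta_m,\beta_n$ of $b$) introduced by each reduction step; since $\sigma$ is a $\mK$-algebra automorphism of the finitely generated $\mK$-algebra $D$, this inflation is bounded linearly in the number of reduction steps, yielding again $\dim_\mK V^n\bar{1}=O(n)$. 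The main obstacle is precisely this bookkeeping of denominators in the torsionfree case, but it is the standard filtration analysis underlying the proofs of Theorems \ref{A15Mar15} and \ref{B15Mar15} in the cited Bavula references. Combined with the trivial lower bound $\GK(M)\geq 1$ (from $M$ being infinite-dimensional over $\mK$), this gives $\GK(M)=1$.
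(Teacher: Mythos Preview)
Your proposal is correct and follows essentially the same approach as the paper, which simply asserts that the result ``follows from the above classification of simple, unfaithful, infinite dimensional $\CA$-modules.'' You have spelled out explicitly what the paper leaves implicit, namely the case-by-case filtration estimate; one small remark is that in the $D$-torsionfree case your linear bound on denominator growth relies not merely on $\sigma$ being a $\mK$-algebra automorphism but on the fact that in each of the four cases $\sigma$ actually preserves the natural degree on $D$ (it rescales the generator), which is what makes the bookkeeping go through.
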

\begin{proof}
The statement follows from the above classification of simple, unfaithful, infinite dimensional $\CA$-modules.
\end{proof}

\section{Classification of simple weight/$\mK[K^{\pm 1}]$-torsion $\CA$-modules} \label{SimWeight} 

Let $D=\mK[K, K^{-1}]$. The aim of this section is to give a classification of the set $\hat{\CA}$($D$-torsion) of isomorphism classes of simple $D$-torsion $\CA$-modules (Theorem \ref{B23Mar15} and Theorem \ref{25Mar15}). As a result, a classification of the set $\hat{\CA}$(weight) of isomorphism classes of simple weight $\CA$-modules is obtained by showing that $\hat{\CA}$($D$-torsion)$= \hat{\CA}$(weight) (Lemma \ref{a26Mar15}).

For the algebra $D=\mK[K,K^{-1}]$, let $D^{\circ}:= D \setminus \{ 0\}.$ It is obvious that the set $D^{\circ}$ is a left and right Ore set of the domain $\CA$. An $\CA$-module $M$ is called a $D$-\emph{torsion} $\CA$-module if $(D^{\circ})^{-1}M =0$. An important case of $D$-torsion $\CA$-modules are the weight $\CA$-modules.

Let $M$ be a weight $\CA$-module. For each $\gp \in \Supp(M)$, $M_{\gp}$ is the sum of all the $D$-submodules of $M$ that are isomorphic to the $D$-module $D/\gp$. If $\mK$ is an algebraically closed field then $\Max(D)=\{ (K-\mu)\,|\, \mu \in \mK^* \}$  and $M_{(K-\mu)}= \{ v \in M \,|\, Kv = \mu v \}$. The algebra $\CA$ admits the inner automorphism $\o_K: a \mapsto KaK^{-1}$. Each of the canonical generators $K^{\pm 1}, E, X$ and $Y$ of the algebra $\CA$ are eigenvectors of the automorphism $\o_K$ with eigenvalues, $1, q^2, q$ and $q^{-1}$, respectively.  So, the algebra $\CA= \bigoplus_{i \in \Z} \CA_i$ is a $\Z$-graded algebra where $\CA_i:= \{ a \in \CA \,|\, \o_K(a)= q^i a \} \neq 0$ and $\CA_0$ is the centralizer $C_{\CA}(K)= \{ a \in \CA\,|\, Ka=aK \}$ of the element $K$ in $\CA$.

Let $\sigma \in \Aut_{\mK}(D)$ where $\sigma(K)=qK$. For all $a_i \in \CA_i$, $Ka_i = \o_K(a_i)K= a_i q^iK = a_i \sigma^i(K)$. Therefore, for all $d \in D$, $da_i = a_i \sigma^i(d)$. If $M = \bigoplus M_{\gp}$ is a weight $\CA$-module then
\begin{align}
\CA_i M_{\gp}  \subseteq M_{\sigma^{-i}(\gp)}. \label{AiMp} 
\end{align}
So, each weight $\CA$-module $M$ is a direct sum of $\CA$-submodules
\begin{align}
M = \bigoplus_{\mathscr{O} \in \Max(D)/\langle \sigma \rangle} M(\mathscr{O}), \quad M(\mathscr{O}):= \bigoplus_{\gp \in \mathscr{O}} M_{\gp}, \label{AiMp1} 
\end{align}
where $\Max(D)/\langle \sigma \rangle$ is the set of $\langle \sigma \rangle$-orbits $\mathscr{O}$ in $\Max(D)$ (where $\mathscr{O}= \{\sigma^i(\gp)\,|\, i \in \Z \}$).

By (\ref{AiMp1}), if $M$ is a simple weight $\CA$-module then necessarily $\Supp(M) \subseteq \mathscr{O}$ for some orbit $\mathscr{O}$. In fact, for each simple infinite dimensional weight $\CA$-module $M$, $\Supp(M)$ is an orbit (Corollary \ref{b26Mar15}). For each $\gp \in \Max(D)$, the $\CA$-module
\begin{align*}
\CA(\gp):= \CA \otimes_D D/\gp = \bigoplus_{i \in \Z} \CA_i \otimes_D D/\gp
\end{align*}
is a weight $\CA$-module with $\Supp(\CA(\gp))= \mathscr{O}(\gp)= \{\sigma^i(\gp) \,|\, i \in \Z \}$ and $\CA(\gp)_{\sigma^{-i}(\gp)}= \CA_i \otimes_D D/\gp.$ Since $\CA_D$ is a free right $D$-module, $\CA(\gp) \simeq \CA/\CA \gp.$

\begin{lemma}\label{a26Mar15} 
$\hat{\CA}$\,{\rm (weight)} $= \hat{\CA}$\,{\rm ($D$-torsion)}.
\end{lemma}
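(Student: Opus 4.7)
\emph{Proof plan.} The inclusion $\hat{\CA}(\text{weight}) \subseteq \hat{\CA}(D\text{-torsion})$ is straightforward. If $M = \bigoplus_{\gp \in \Supp(M)} M_\gp$ is a weight module and $m = \sum_{i=1}^s m_{\gp_i}$ with the $\gp_i$ pairwise distinct and $m_{\gp_i} \in M_{\gp_i}$, then picking a generator $f_i$ of each principal ideal $\gp_i$ of the PID $D = \mK[K^{\pm 1}]$, the nonzero element $d = \prod_{i=1}^s f_i \in D^\circ$ kills $m$. Hence every element of $M$ is $D$-torsion, i.e., $(D^\circ)^{-1} M = 0$.

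For the reverse inclusion, let $M$ be a simple $D$-torsion $\CA$-module; I will show $M$ is a weight module. The plan is first to manufacture a nonzero weight vector in $M$, and then use the $\Z$-grading $\CA = \bigoplus_{i \in \Z} \CA_i$ together with (\ref{AiMp}) to spread it out into a weight decomposition. Pick any $0 \neq m \in M$. By hypothesis $\ann_D(m)$ is a nonzero ideal of $D$, say $\ann_D(m) = (f)$. Factor $f = u \prod_{i=1}^r p_i^{a_i}$ with $u \in D^*$ and the $p_i$ pairwise non-associate irreducibles of $D$. By the Chinese Remainder Theorem applied to the cyclic $D$-module $Dm \simeq D/(f)$, there is an index $i$ and an element $0 \neq m' \in Dm$ with $p_i m' = 0$. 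Setting $\gp := (p_i) \in \Max(D)$ gives $0 \neq m' \in M_\gp$.

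Now by simplicity $M = \CA m' = \sum_{i \in \Z} \CA_i m'$, and by (\ref{AiMp}) we have $\CA_i m' \subseteq M_{\sigma^{-i}(\gp)}$, so
\begin{align*}
M = \sum_{i \in \Z} \CA_i m' \subseteq \sum_{i \in \Z} M_{\sigma^{-i}(\gp)} \subseteq \bigoplus_{\gq \in \Max(D)} M_\gq \subseteq M.
\end{align*}
The last two inclusions use the standard fact that a sum of weight spaces attached to pairwise distinct maximal ideals of a commutative ring is automatically direct: in any hypothetical relation $\sum_{j=1}^s v_j = 0$ with $0 \neq v_j \in M_{\gq_j}$ and the $\gq_j$ distinct, one chooses, for each $k$, an element $d_k \in \bigcap_{j \neq k} \gq_j \setminus \gq_k$ (available by maximality of the $\gq_j$) and applies it to obtain $d_k v_k = 0$, a contradiction. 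Hence $M = \bigoplus_{\gq} M_\gq$ is a weight module, finishing the proof. The only step that requires care is the CRT-based extraction of the weight vector $m'$; everything else is formal.
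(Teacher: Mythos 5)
Your proof is correct, and it is essentially the same argument as the paper's, just with the two implicit steps spelled out. The paper says that every simple $D$-torsion module is an epimorphic image of the weight module $\CA(\gp)=\CA\otimes_D D/\gp$ for some $\gp\in\Max(D)$, and then uses that a quotient of a weight module is weight. Your proof unpacks both halves: the CRT extraction of a nonzero $m'\in M_\gp$ is exactly what is needed to produce the epimorphism $\CA(\gp)\twoheadrightarrow M$, $a\otimes\bar d\mapsto adm'$, and your decomposition $M=\sum_i\CA_i m'\subseteq\bigoplus_\gq M_\gq$ via (\ref{AiMp}) is precisely the reason $\CA(\gp)$ (hence $M$) is a weight module. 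One small remark: the CRT detour can be shortened — if $\ann_D(m)=(f)$ and $p$ is an irreducible factor of $f$, then $(f/p)m\neq 0$ (since $f/p\notin(f)$) and $p\cdot(f/p)m=fm=0$, giving the weight vector directly. Also note that (\ref{AiMp}) is stated in the paper for weight modules, but the one-line computation $\sigma^{-i}(\gp)(a_i m')=a_i\gp m'=0$ justifies the inclusion $\CA_i m'\subseteq M_{\sigma^{-i}(\gp)}$ for any module, so your use of it is legitimate.
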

\begin{proof}
Clearly, $\hat{\CA}$\,{\rm (weight)} $\subseteq \hat{\CA}$\,{\rm ($D$-torsion)}. The opposite inclusion follows from the fact that every simple $D$-torsion $\CA$-module $M$ is an epimorphic image of the weight $\CA$-module $\CA(\gp)$ for some $\gp \in \Max(D)$. Hence, $M$ is weight.
\end{proof}

Let
$$\Prim\,(\CA, D{\rm\text{-}torsion}):= \{ \ann_{\CA}(M)\,|\, M \in \hat{\CA}\,(D{\rm\text{-}torsion}) \}.$$
For each primitive ideal $\gp \in \Prim\,(\CA)$, let $\hat{\CA}\,(D{\rm\text{-}torsion}, \gp):= \{ M \in \hat{\CA}\,(D{\rm\text{-}torsion})\,|\, \ann_{\CA}(M)=\gp \}$. For each non-empty subset $S \subseteq \Prim\,(\CA)$, let
\begin{align*}
\hat{\CA}\,(D{\rm\text{-}torsion}, S):= \bigsqcup_{\gp \in S} \hat{\CA}\,(D{\rm\text{-}torsion}, \gp).
\end{align*}

\textbf{The set $\hat{\CA}\,(D{\rm\text{-}torsion},  \gp)$ where $\gp \in \Max\,(\CA)$.} The next theorem shows that $\Prim(\CA, D{\rm\text{-}torsion})= \Max\,(\CA) \,\, \sqcup \,\, \{0 \}$ and gives an explicit description of the sets $\hat{\CA}\,(D{\rm\text{-}torsion}, \gp)$ for all $\gp \in \Max\,(\CA)$.

\begin{theorem} \label{B23Mar15} 
Let $D=\mK[K,K^{-1}]$.
\begin{enumerate}
\item $\Prim\,(\CA, D{\rm\text{-}torsion}) = \Max(\CA) \, \sqcup \, \{0 \}$.  In particular, we have $\hat{\CA}\Big(D{\rm\text{-}torsion}, (Y)\Big) = \emptyset$ and $\hat{\CA}\Big(D{\rm\text{-}torsion}, (E)\Big) = \emptyset.$
\item Recall that $\Max(\CA) = \CP \,\sqcup \, \CQ \,\sqcup \, \CR.$ Then
\begin{enumerate}
\item $\hat{\CA}\,(D{\rm\text{-}torsion}, \CP)= \hat{\CA}\,({\rm fin.\,\,dim.})$. (See Theorem \ref{A23Mar15} for a classification.)
\item $\hat{\CA}\,(D{\rm\text{-}torsion}, \CQ)= \bigsqcup_{\gp \in \CQ} \hat{\CA}\,(D{\rm\text{-}torsion}, \gp)$. For each $\gp =(X, \gq) \in \CQ$ where $\gq \in \Max\,(\mK[Z])\setminus \{ (Z) \}$, $\CA/\gp \simeq L_\gq [K^{\pm 1}][Y^{\pm 1}; \rho]$ where $\rho$ is an $L_\gq$-automorphism of the polynomial algebra $D_\gq:= L_\gq[K^{\pm 1}]$ given by the rule $\rho(K)= qK$ and $\hat{\CA}\,(D{\rm\text{-}torsion}, \gp)= \widehat{\CA/\gp}\,(D_\gq{\rm\text{-}torsion}).$ The map
$$\Max(D_\gq)/\langle \rho \rangle \longrightarrow \widehat{\CA/\gp}\,(D_\gq{\rm\text{-}torsion}), \quad \mathscr{O}(\gm) \mapsto (\CA/\gp)/(\CA/\gp)\gm,$$
is a bijection with the inverse $M \mapsto \Supp_{D_{\gq}}(M).$
\item $\hat{\CA}\,(D{\rm\text{-}torsion}, \CR)= \bigsqcup_{\gp \in \CR} \hat{\CA}\,(D{\rm\text{-}torsion}, \gp)$. For each $\gp =(\varphi, \gr) \in \CR$ where $\gr \in \Max\,(\mK[C])\setminus \{ (C) \}$, $\CA/\gp \simeq L_\gr [K^{\pm 1}][Y^{\pm 1}; \rho]$ where $\rho$ is an $L_\gr$-automorphism of the polynomial algebra $D_\gr:= L_\gr[K^{\pm 1}]$ given by the rule $\rho(K)= qK$ and  $\hat{\CA}\,(D{\rm\text{-}torsion}, \gp)= \widehat{\CA/\gp}\,(D_\gr{\rm\text{-}torsion}).$ The map
$$\Max(D_\gr)/\langle \rho \rangle \longrightarrow \widehat{\CA/\gp}\,(D_{\gr}{\rm\text{-}torsion}), \quad \mathscr{O}(\gm) \mapsto (\CA/\gp)/(\CA/\gp)\gm,$$
is a bijection with the inverse $M \mapsto \Supp_{D_{\gr}}(M).$
\item The set $\hat{\CA}\,(D{\rm\text{-}torsion},0) \neq \emptyset$ is described in Theorem \ref{25Mar15}.
\end{enumerate}
\end{enumerate}
\end{theorem}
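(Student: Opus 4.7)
My plan is to traverse the primitive ideals of $\CA$ listed in Proposition \ref{a27Feb15} one by one. For a maximal ideal $(Y,E,\gp) \in \CP$, the factor $\CA/(Y,E,\gp) \simeq \mK[K^{\pm 1}]/\gp$ is a finite field extension of $\mK$; its unique simple module is $D$-torsion and finite-dimensional, giving (2)(a) together with Theorem \ref{A23Mar15}. The case $\gp = 0$ is deferred to Theorem \ref{25Mar15}, which supplies a simple faithful $D$-torsion $\CA$-module and thereby justifies $0 \in \Prim(\CA, D{\rm\text{-}torsion})$.

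For $\gp = (X, \gq) \in \CQ$ (symmetrically $(\varphi,\gr) \in \CR$), Theorem \ref{A23Feb15}.(4) and the relation $YK = qKY$ present $\CA/\gp$ as $R = D_\gq[Y^{\pm 1}; \rho]$ with $\rho(K) = qK$. Since $L_\gq/\mK$ is finite, $D$-torsion and $D_\gq$-torsion coincide for $R$-modules: a cyclic $D_\gq$-torsion module has the form $D_\gq/(d'')$, finite-dimensional over $L_\gq$ and hence over $\mK$, so automatically $D$-torsion. Because $q$ is not a root of unity, comparing leading and constant coefficients of $f(K)$ with those of $f(q^{-n}K)$ for an irreducible $f \in L_\gq[K^{\pm 1}]$ rules out finite $\langle \rho \rangle$-orbits in $\Max(D_\gq)$, so $\rho$ is of type (I). Theorem \ref{A15Mar15}.(1) then yields the asserted bijection $\mathscr{O}(\gm) \mapsto R/R\gm$.

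The substantive cases are $(Y)$ and $(E)$. For $(Y)$: $\CA/(Y) \simeq U = \mK[E][K^{\pm 1}; \sigma]$ with $\sigma(E) = q^2 E$. Assume by contradiction that $M$ is a simple $D$-torsion $U$-module. The primary decomposition over $D$ gives $M = \bigoplus_\gm W_\gm$, and the commutation $g(K)E = E g(q^2K)$ for $g \in D$ shows $E \cdot W_\gm \subseteq W_{\tau^{-2}(\gm)}$, where $\tau \in \Aut(D)$ is defined by $\tau(K) = qK$. Pick $0 \neq m \in W_\gm$; since $M = Um$ and $U = \mK[E]\otimes_\mK D$ as a $\mK$-vector space, $M \subseteq \bigoplus_{i \geq 0} W_{\tau^{-2i}(\gm)}$, with pairwise distinct summands (as $q$ is not a root of unity). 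Then $EM \subseteq \bigoplus_{i \geq 1} W_{\tau^{-2i}(\gm)}$ is a $U$-submodule of $M$ missing the nonzero $W_\gm$, so $EM = 0$ by simplicity. Hence $E$ and $Y$ both annihilate $M$ and $\ann_\CA(M) \in \CP$, contradicting $\ann_\CA(M) = (Y)$. The case $(E)$ is completely parallel, using $\CA/(E) \simeq \mK[Y][K^{\pm 1}; \tau']$ with $\tau'(Y) = q^{-1}Y$ and the weight shift $Y \cdot W_\gm \subseteq W_{\tau^{-1}(\gm)}$.

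The principal obstacle is verifying in the $(Y)$ and $(E)$ analyses that $EM$ (respectively $YM$) is a \emph{proper} $U$-submodule; this reduces to the distinctness of the shifts $\tau^{-2i}(\gm)$ (respectively $\tau^{-i}(\gm)$), which is a direct consequence of the standing hypothesis that $q$ is not a root of unity.
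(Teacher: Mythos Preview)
Your proof is correct and follows essentially the same route as the paper: traverse the primitive ideals from Proposition~\ref{a27Feb15}, handle $\CQ$ and $\CR$ by reducing $D_\gq$-torsion to $D$-torsion via the finiteness of $L_\gq/\mK$ and applying Theorem~\ref{A15Mar15}.(1), and dispose of $(Y)$ and $(E)$ by showing that the shift action of the normal element $E$ (resp.\ $Y$) on the weight grading forces $EM=0$ (resp.\ $YM=0$). The only cosmetic difference is that for the $(Y)$ case the paper phrases the argument via the universal module $U(\gp)=U/U\gp$ and its unique maximal submodule $\bigoplus_{i\ge 1}E^i\otimes D/\gp$, while you argue directly on the simple module; the underlying idea---distinct weights $\tau^{-2i}(\gm)$ because $q$ is not a root of unity, hence $EM$ is proper---is identical.
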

\begin{proof}
By Proposition \ref{a27Feb15}, $\Prim\,(\CA)= \Max(\CA) \, \sqcup \, \{ (Y),\, (E),\, 0 \}$ and $\Max(\CA)= \CP \, \sqcup \, \CQ \,\sqcup \,\CR$, by Corollary \ref{a24Feb15}.

(a) The statement (a) is obvious.

(b) Let $\gp =(X, \gq) \in \CQ$ where $\gq \in \Max\,(\mK[Z])\setminus \{ (Z) \}$. Since $L_\gq$ is a finite field extension of $\mK$, i.e., $\dim_{\mK}(L_\gq) < \infty$, we have the equality $\widehat{\CA/\gp}(D_{\gq}{\rm\text{-}torsion})= \hat{\CA}(D{\rm\text{-}torsion}, (X, \gq))$ and the statement (b) follows from Theorem \ref{A15Mar15}.(1).

(c) The statement (c) is treated almost identically to the statement (b) (with obvious modifications).

(i) $\hat{\CA}\Big(D{\rm\text{-}torsion}, (Y)\Big)=\emptyset:$ The algebra $\CA/(Y)$ is isomorphic to the algebra $U=D[E;\tau]$ where $\tau(K) = q^{-2}K$. So, $\hat{\CA}\Big(D{\rm\text{-}torsion}, (Y)\Big)= \hat{U}(D{\rm\text{-}torsion}, 0)$. For each $\gp \in \Max(D)$, the $U$-module $U(\gp):= U/(\gp) \simeq \bigoplus_{i \geqslant 0} E^i \otimes D/\gp$ contains the largest submodule $U(\gp)_+ := \bigoplus_{i \geqslant 1}E^i \otimes D/\gp$ and the factor module $U(\gp)/U(\gp)_+ \simeq D/\gp$ is simple with non-zero annihilator generated by the normal element $E$ of $U$, and the statement (i) follows.

(ii) $\hat{\CA}\Big(D{\rm\text{-}torsion}, (E)\Big)=\emptyset:$ By Theorem \ref{A23Feb15}.(3), $\CA/(E) \simeq D[Y;\tau]$ where $\tau(K)= qK$. The statement (ii) follows from the statement (i) by replacing $q^{-2}$ by $q$.

Now, statement 1 follows from the statements (i) and (ii).
\end{proof}

\noindent\textbf{The centralizer of $K$ in $\CA$.}
Now, we describe the centralizer $C_{\CA}(K)$ of the element $K$ in the algebra $\CA$. Recall that $C_{\CA}(K)= \{ a \in \CA \,|\, Ka = aK \}$. Note that $\CA$ is a $\Z$-graded algebra $\CA= \oplus_{i \in \Z} \CA_i$, where $\CA_i = \{ a \in \CA \,|\, KaK^{-1}= q^i a \}$. We see that $C_{\CA}(K)= \CA_0$.
\begin{proposition}\label{B23Feb15} 
Let $t:= YX$ and $u:= Y\varphi$, then $\CA_0 = \mK[K^{\pm 1}]\otimes \L$ is the tensor product of two algebras where $\L: = \mK \langle t, u \,|\, tu= q^2ut \rangle $ is a quantum plane, and $Z(\CA_0)= \mK[K^{\pm 1}]$.
\end{proposition}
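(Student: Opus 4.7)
\emph{Proof plan.} I would use the $\Z$-grading $\CA=\bigoplus_{i\in\Z}\CA_i$ induced by $\omega_K$, under which $\deg K=0$, $\deg X=\deg\varphi=1$, $\deg Y=-1$, $\deg E=2$, so $C_\CA(K)=\CA_0$. Combining the iterated extension $\CA=\mathbb{E}[K^{\pm 1};\tau]$ with the GWA presentation $\mathbb{E}=\mK[X,\varphi][E,Y;\sigma,a]$ from (\ref{EEX}), every element of $\CA$ can be written uniquely as a $\mK$-linear combination of monomials
\[
K^i\, X^a\varphi^b\, v_n, \qquad v_n=E^n\ (n\ge 0),\ v_n=Y^{-n}\ (n<0),\ i\in\Z,\ a,b\in\N.
\]
Since $\omega_K(X^a\varphi^b E^n)=a+b+2n$ and $\omega_K(X^a\varphi^b Y^m)=a+b-m$, the condition ``degree $0$'' rules out $n\ge 1$ and forces $m=a+b$ when $n<0$. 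Thus
\[
\{\,K^i X^a\varphi^b Y^{a+b}\ :\ i\in\Z,\ a,b\in\N\,\}
\]
is a $\mK$-basis of $\CA_0$.

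Next I would verify directly the commutation relations. Using $XY=qYX$ and $\varphi Y=q^{-1}Y\varphi$ (from (\ref{XqYE})), an easy induction gives
\[
(YX)^a=q^{\binom{a}{2}}Y^aX^a,\qquad (Y\varphi)^b=q^{-\binom{b}{2}}Y^b\varphi^b,\qquad X^aY^b=q^{ab}Y^bX^a,
\]
from which
\[
t^au^b=(YX)^a(Y\varphi)^b = q^{\binom{a}{2}-\binom{b}{2}+ab}\,Y^{a+b}X^a\varphi^b,
\]
which is a nonzero scalar multiple of the basis element $X^a\varphi^b Y^{a+b}$ (move $Y^{a+b}$ to the right using the same quasi-commutation relations). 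Hence the set $\{K^it^au^b\}$ is also a $\mK$-basis of $\CA_0$. Moreover the defining relations of $\CA$ give $KY=q^{-1}YK$, $KX=qXK$, $K\varphi=q\varphi K$, from which $Kt=tK$ and $Ku=uK$, while
\[
tu=YX\cdot Y\varphi = qY^2\varphi X=q^2\cdot q^{-1}Y^2\varphi X = q^2\,Y\varphi\cdot YX=q^2ut,
\]
using $XY=qYX$, $X\varphi=\varphi X$ and $\varphi Y=q^{-1}Y\varphi$.

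Therefore there is a surjective algebra homomorphism $\mK[K^{\pm 1}]\otimes\L\to\CA_0$ sending the generators to their namesakes, and it is injective because both sides have the same PBW basis $\{K^it^au^b\}$. This establishes the isomorphism $\CA_0=\mK[K^{\pm 1}]\otimes\L$.

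Finally, for the centre, $Z(\CA_0)=\mK[K^{\pm 1}]\otimes Z(\L)$. An element $\sum c_{a,b}t^au^b\in\L$ commutes with $t$ iff $c_{a,b}(1-q^{-2b})=0$ for all $a,b$, which (as $q$ is not a root of unity) forces $b=0$; commuting with $u$ then forces $a=0$, so $Z(\L)=\mK$ and $Z(\CA_0)=\mK[K^{\pm 1}]$.

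The only real technical point is pinning down the basis of $\CA_0$; once the iterated Ore/GWA PBW basis of $\CA$ is used to get the monomial basis $\{K^iX^a\varphi^bY^{a+b}\}$, the rest is a bookkeeping exercise in the quasi-commutation rules between $X,\varphi,Y$.
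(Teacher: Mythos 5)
Your argument is correct, and it is a genuinely (if modestly) different route from the paper's. The paper starts from the vector-space decomposition $\CA = U \otimes \mK_q[X,Y]$, writes $\CA_0 = \bigoplus_{i\geqslant 0}\mK[K^{\pm 1}, t](EY^2)^i$, and then needs the identity $EY^2 = \frac{1}{q(1-q^2)}Y\varphi + \frac{q^3}{q^2-1}YX$ to trade the generator $EY^2$ for $u = Y\varphi$. You instead use the GWA/skew-Laurent basis $\{K^i X^a\varphi^b v_n\}$ of $\CA$ coming from (\ref{EEX})--(\ref{EEX1}), so that $\varphi$ is already one of the ``polynomial'' variables; the $\o_K$-degree count then directly gives the basis $\{K^i X^a\varphi^b Y^{a+b}\}$ of $\CA_0$, and rewriting these in terms of $t^a u^b$ is a routine quasi-commutation calculation. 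Both approaches hinge on the same idea — exhibit an explicit monomial basis of $\CA_0$ and match it against $\mK[K^{\pm 1}]\otimes\L$ — but your choice of basis avoids the ad hoc identity for $EY^2$, which is a small gain in transparency. Your computations of $tu = q^2 ut$, $Kt=tK$, $Ku=uK$, and $Z(\L)=\mK$ check out, and the passage $Z(\CA_0)=\mK[K^{\pm 1}]\otimes Z(\L)$ is the same standard tensor-product fact the paper invokes.
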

\begin{proof}
Notice that the quantum plane $\mK_q[X, Y]$ is a $\Z$-graded algebra $\mK_q[X, Y]= \bigoplus_{j \in \Z}\mK[t]w_j$, where $w_0:=1$, for $j \geqslant 1$, $w_j= X^j$ and $w_{-j}= Y^j.$
Notice further that $U= \bigoplus_{i \in \N}\mK[K^{\pm 1}]E^i$. Since $\CA = U \otimes \mK_q[X, Y]$ (a tensor product of vector spaces),
\begin{align*}
\CA &= \bigoplus_{i \in \N}\mK[K^{\pm 1}]E^i \otimes \bigoplus_{j \in \Z}\mK[t]w_j= \bigoplus_{i\in \N,j\in\Z}\mK[K^{\pm 1}, t]E^iw_j \\
&= \bigoplus_{i,j \geqslant 0}\mK[K^{\pm 1}, t]E^iX^j \oplus \bigoplus_{i \geqslant 0, j\geqslant 1}\mK[K^{\pm 1}, t]E^i Y^j.
\end{align*}
Therefore,
$\CA_0 = \bigoplus_{i \geqslant 0} \mK[K^{\pm 1}, t]E^i Y^{2i}= \bigoplus_{i \geqslant 0}\mK[K^{\pm 1}, t](EY^2)^i.$
Then $\CA_0 = \mK \langle K^{\pm 1}, \,t, \, EY^2 \rangle$. Since $EY^2 = \frac{1}{q(1-q^2)}Y\varphi +\frac{q^3}{q^2-1} YX$, we see that $\CA_0 = \mK \langle K^{\pm 1}, \, t, \, Y\varphi \rangle = \mK[K^{\pm 1}] \otimes \L$ as required. Now, $Z(\CA_0)= \mK[K^{\pm 1}] \otimes Z(\L)= \mK[K^{\pm 1}] \otimes \mK = \mK[K^{\pm 1}]$.
\end{proof}

\noindent\textbf{The set $\hat{\CA}\,(D{\rm\text{-}torsion}, 0)$.}
Recall that $D=\mK[K^{\pm 1}]$, $\sigma \in \Aut_{\mK}(D)$ where $\sigma(K)= qK$ and $\CA_0 = D \otimes \L$ is  a tensor product of algebras. Then
\begin{align}
\CA_Y = \CA_0[Y^{\pm 1};\sigma] \label{AYA} 
\end{align}
where $\sigma(K)= qK, \sigma(t)= q^{-1}t$ and $\sigma(u)=qu$. Clearly,
\begin{align}
\CA_{Y, X, \varphi}= \CA_{Y, t, u} = (D \otimes \L_{t,u})[Y^{\pm 1}; \sigma] = {\CA}_{0,t, u}[Y^{\pm 1};\sigma] \label{AYA1} 
\end{align}
where $\CA_{0, t,u}= (\CA_0)_{t,u}$ is the localization of $\CA_0$ at the powers of the elements $t$ and $u$.
The inner automorphism $\o_Y: a \mapsto YaY^{-1}$ of the algebra $\CA_{Y, X, \varphi}$ preserves the subalgebras $\CA_0$ and ${\CA}_{0,t, u}$ since
\begin{align}
\o_Y(K)= qK, \quad \o_Y(t)= q^{-1}t \quad {\rm and} \quad \o_Y(u)=qu.
\end{align}
So, $\o_Y \in \Aut_{\mK}(\CA_0)$ and $\o_Y \in \Aut_{\mK}({\CA}_{0, t, u})$. Clearly, $(\o_Y)^i = \o_{Y^i}$ for all $i \in \Z$.

The group $\langle \sigma \rangle$ acts on the  maximal spectrum $\Max(D)$ of the algebra $D$. For each $\gp \in \Max(D)$, the orbit $\mathscr{O}(\gp):= \{ \sigma^i(\gp)\,|\, i\in \Z \}$ is infinite. The set of all $\langle \sigma \rangle$-orbits in $\Max(D)$ is denoted by $\Max(D)/\langle \sigma \rangle.$ \emph{For each orbit} $\mathscr{O}$,\emph{ we fix a representative} $\gp = \gp_{\mathscr{O}}$ \emph{of} $\mathscr{O}$. Then the factor algebra $\CA_0/(\gp) \simeq D/\gp \otimes \L = D/\gp \langle t, u \,|\, tu =q^2 ut \rangle$ is the quantum plane over the field $D/\gp$. Let $\widehat{D/\gp \otimes \L}(\infty{\rm\text{-}dim.})$ be the set of isomorphism classes of simple infinite dimensional $D/\gp \otimes \L$-modules. A classification of all simple $\L$-modules is given in \cite{Bav-SimpModQuanPlane}, \cite{Bav-OystGeneCrossPro} and \cite{Bav-OreSim-1999}. In particular, the map
\begin{align}
\widehat{D/\gp \otimes \L}(\infty{\rm\text{-}dim.}) \longrightarrow \widehat{D/\gp \otimes \L_{t,u}}\,(\soc_{D/\gp \otimes \L}\neq 0), \quad [N] \mapsto [N_{t,u}], \label{DpLM} 
\end{align}
is a bijection with the inverse $M \mapsto \soc_{D/\gp \otimes \L}(M).$

For each $[N] \in \widehat{D/\gp \otimes \L}(\infty{\rm\text{-}dim.})$, the vector space $\mathscr{N}(N):= \mK[Y, Y^{-1}] \otimes N = \bigoplus_{i \in \Z}Y^i \otimes N$ admits a structure of $\CA$-module given by the rule (where $v \in N$):
\begin{align}
K(Y^i \otimes v)&= q^{-i} Y^i \otimes Kv, & Y(Y^i \otimes v)&= Y^{i+1}\otimes v, \notag \\
X(Y^i \otimes v)&= q^iY^{i-1}\otimes t v, & E(Y^i \otimes v)&= Y^{i-2} \otimes \frac{q^{-i}u - q^it}{q^{-1}-q}v. \label{ModN1} 
\end{align}
One can verify this directly using the defining relations of the algebra $\CA$ or, alternatively, this will be proved in the proof of Theorem \ref{25Mar15}. This construction gives all the elements of the set $\hat{\CA}(D{\rm\text{-}torsion}, 0)$ (Theorem \ref{25Mar15}).

\begin{lemma} \label{a5Jun15} 
Let $M$ be a simple weight $\CA$-module. If $M_Y =0$ then $XM=0$, and so $\ann_{\CA}(M) \neq 0$.
\end{lemma}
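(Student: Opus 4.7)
The plan is to prove $XM=0$, from which $\ann_{\CA}(M)\supseteq (X)\neq 0$ follows immediately. The driving idea is a weight-grading obstruction: a non-zero vector $v$ killed by $Y$ generates all of $M$ by simplicity, yet multiplication by $X$ strictly shifts $\o_K$-weights, so $X$ cannot map $M$ back onto the weight space containing $v$.

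First I would produce a non-zero weight vector killed by $Y$. Since $\CS_Y=\{Y^i\}_{i\geq 0}$ is an Ore set of $\CA$ (Lemma \ref{c23Feb15}) and $M_Y=\CS_Y^{-1}M=0$, every element of $M$ is annihilated by some power of $Y$; in particular $N:=\ker(Y_M\cdot)\neq 0$ (pick any non-zero $v_0 \in M$ and let $j$ be maximal with $Y^j v_0 \neq 0$). With respect to the $\Z$-grading $\CA=\bigoplus_{i\in \Z}\CA_i$ by $\o_K$-eigenspaces, the generators $E, X, Y, K$ have $\o_K$-weights $2,1,-1,0$, so $Y\in \CA_{-1}$ sends each $M_\gp$ into $M_{\sigma(\gp)}$ under the convention $\CA_i M_\gp\subseteq M_{\sigma^{-i}\gp}$. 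Hence $N$ is graded, and after projecting onto a non-zero homogeneous component one obtains $v\in N\cap M_{\gp_0}$ for some $\gp_0\in \Supp_D(M)$.

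Next I would describe $M=\CA v$ explicitly. Using the PBW basis $\{X^a Y^b E^c K^l : a,b,c\geq 0,\,l\in \Z\}$ of $\CA$ (arising from its iterated skew-polynomial presentation) together with the identity $Y^b K^l=q^{bl}K^l Y^b$ and $Yv=0$, every monomial with $b\geq 1$ annihilates $v$. Reordering the surviving monomials via $XK=q^{-1}KX$ and $EK=q^{-2}KE$ yields $M=\mathrm{span}_{\mK}\{K^l X^a E^c v : l\in \Z,\, a,c\geq 0\}$. Since $K^l X^a E^c\in \CA_{a+2c}$, this vector lies in $M_{\sigma^{-(a+2c)}\gp_0}$, so $\Supp_D(M)\subseteq \{\sigma^{-n}\gp_0 : n\geq 0\}$ and the $\gp_0$-weight component is precisely $\mathrm{span}_\mK\{K^l v : l\in \Z\}\ni v$. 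Applying $X$ to the spanning set and using $XK^l=q^{-l}K^l X$ gives $X\cdot K^l X^a E^c v=q^{-l}K^l X^{a+1}E^c v\in M_{\sigma^{-(a+1+2c)}\gp_0}$, so $XM\subseteq \bigoplus_{n\geq 1}M_{\sigma^{-n}\gp_0}$ and $XM\cap M_{\gp_0}=0$; in particular $v\notin XM$, forcing $XM\subsetneq M$.

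Finally, since $X$ is a normal element of $\CA$, $XM$ is an $\CA$-submodule of $M$; combined with $XM\subsetneq M$ and simplicity, this forces $XM=0$, whence $X\in \ann_{\CA}(M)\neq 0$. The main obstacle will be the middle step — extracting the clean spanning set $M=\mathrm{span}\{K^l X^a E^c v\}$ from the PBW basis and the identity $Yv=0$ — but it is just bookkeeping with the commutation relations. Once that reduction is in place, the weight mismatch in the third step is automatic, because $X\in \CA_1$ shifts $\o_K$-degrees by exactly $+1$ and $q$ is not a root of unity, so different weight spaces in the $\sigma$-orbit of $\gp_0$ are genuinely distinct.
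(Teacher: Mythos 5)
Your overall strategy is the same as the paper's: produce a non-zero weight vector $v$ with $Yv=0$, then use the normality of $X$ together with a weight-shifting argument to show that $XM$ is a proper $\CA$-submodule of $M$, hence zero. The paper's own proof is terser — it simply asserts $\CN:=\CA Xv\neq M$ without explanation — so your weight-degree argument in fact supplies the justification the paper leaves implicit. That part of your reasoning (that $X\in \CA_1$ strictly shifts weights in the infinite $\sigma$-orbit of $\gp_0$, so $v\notin XM$) is correct and is the right way to fill the gap.

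There is, however, a genuine slip in the middle step. With the PBW basis $\{X^aY^bE^cK^l\}$ the monomial acts on $v$ from the right, so $Y^b$ is applied to $E^cK^lv$, not to $v$; moving $K^l$ past $Y^b$ does not help because $E^c$ still sits between $Y^b$ and $v$. In fact $YE\cdot v=(qEY-qX)v=-qXv\neq 0$ when $Xv\neq 0$, so the claim "every monomial with $b\geq 1$ annihilates $v$" is false as stated. The conclusion $M=\mathrm{span}_\mK\{K^lX^aE^cv\}$ is nevertheless correct, but the derivation must be repaired: either use the alternative ordered monomial basis $\{K^lX^aE^cY^b\}$ (which is a basis since $\Gr\,\CA$ is a quantum affine space, and in which $Y$ really does hit $v$ first), or observe that when you push $Y^b$ to the right past $E^c$ the commutation relation $EY=X+q^{-1}YE$ trades each $Y$ for an $X$, so that after using $Yv=0$ the result still lies in $\mathrm{span}\{K^lX^aE^cv\}$. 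A cleaner and basis-free alternative is the weight count you essentially already use: any homogeneous element of $\CA_i$ with $i<0$ is a sum of monomials $K^lX^aE^cY^b$ with $a+2c-b=i<0$, forcing $b\geq 1$; hence $\CA_iv=0$ for $i<0$, and so $\Supp(M)\subseteq\{\sigma^{-n}\gp_0:n\geq 0\}$ while $XM\subseteq \bigoplus_{n\geq 1}M_{\sigma^{-n}\gp_0}$ misses $M_{\gp_0}\ni v$. With that repair, your argument is complete and agrees in substance with the paper's.
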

\begin{proof}
Suppose that $M_Y =0$. Then $N:= \{ m \in M \,|\, Ym=0 \}$ is a nonzero $\mK[K^{\pm 1}]$-submodule of $M$. Hence, there exists a nonzero weight vector, say $v$, such that $Yv=0$. Then $\CA v = M$, by the simplicity of the $\CA$-module $M$. Then $\CN:= \CA Xv$ is a submodule of $M$ such that $\CN \neq M$. Therefore, $\CN=0$ (since $M$ is a simple $\CA$-module), and the lemma follows.
\end{proof}

 Let $R$ be a ring and $M$ be a left $R$-module. Let $\sigma$ be an automorphism  of $R$. The \emph{twisted module} $M^{\sigma}$ is a module obtained from $M$ by twisting the action of $R$ on $M$ by $\sigma$, i.e., $r \cdot m = \sigma(r)m$ for $r \in R$ and $m \in M.$ The next theorem describes the set $\hat{\CA}\,$($D$-torsion,\,0).

\begin{theorem}\label{25Mar15} 
Let $D= \mK[K, K^{-1}]$ and $\sigma \in \Aut_{\mK}(D)$ where $\sigma(K)=qK$. Then
\begin{enumerate}
\item $\hat{\CA}\,(D{\rm\text{-}torsion}, 0) = \bigsqcup\limits_{\mathscr{O} \in \Max(D)/\langle \sigma \rangle} \hat{\CA}\,(D{\rm\text{-}torsion}, 0, \mathscr{O})$ where $\hat{\CA}\,(D{\rm\text{-}torsion}, 0, \mathscr{O}):= \{ [M] \in \hat{\CA}\,(D{\rm\text{-}torsion}, 0)\,|\, \Supp(M)= \mathscr{O} \}$.
\item For each orbit $\mathscr{O}= \mathscr{O}(\gp)\in \Max(D)/\langle \sigma \rangle$ where $\gp = \gp_{\mathscr{O}}$ is a fixed element of $\mathscr{O}$, the map
\begin{align*}
\widehat{D/\gp \otimes \L}(\infty{\rm\text{-}dim.}) \longrightarrow \hat{\CA}\,(D{\rm\text{-}torsion}, 0, \mathscr{O}), \quad [N] \mapsto [\mathscr{N}(N)],
\end{align*}
is a bijection with the inverse $\mathscr{N} \mapsto \mathscr{N}_\gp.$
\item For all $[M] \in \hat{\CA}\,(D{\rm\text{-}torsion},0)$, $\GK(M)=2$.
\end{enumerate}
\end{theorem}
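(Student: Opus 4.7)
Statement 1 follows immediately from Lemma \ref{a26Mar15} and the decomposition (\ref{AiMp1}): every simple $D$-torsion $\CA$-module is a weight module whose support lies in a single $\langle\sigma\rangle$-orbit.

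For statement 2, my plan is to recognise $\mathscr{N}(N)$ intrinsically as the induced module $\CA_Y \otimes_{\CA_0} N$, with $N$ made into a $\CA_0$-module via the quotient $\CA_0 = D\otimes\L \twoheadrightarrow D/\gp\otimes\L$. By (\ref{AYA}), $\CA_Y = \CA_0[Y^{\pm 1};\sigma]$ is free as a left $\CA_0$-module with basis $\{Y^i\}_{i\in\Z}$, so $\mathscr{N}(N) = \bigoplus_{i\in\Z} Y^i \otimes N$ as a vector space, and a routine rewriting inside $\CA_Y$ (using $X = Y^{-1}t$ and the identity $EY^2 = (q^{-1}u - q^3 t)/(1-q^2) \in \CA_0$) recovers the explicit action (\ref{ModN1}). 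To prove simplicity of $\mathscr{N}(N)$ over $\CA$, I would note that the weights $\sigma^i(\gp)$ for $i\in\Z$ are pairwise distinct, so any $\CA$-submodule is automatically $D$-graded; its $i$-th graded component is a $\CA_0$-submodule of the simple $D/\sigma^i(\gp)\otimes\L$-module $Y^i\otimes N$, hence is $0$ or the whole component. The crucial step will be to show that $Y$, $X$, $E$ connect adjacent graded pieces non-trivially: for $Y$ this is obvious, and for $X$ and $E$ it reduces to the claim that $t$ and $u$ act injectively on every infinite-dimensional simple $D/\gp\otimes\L$-module. This claim in turn follows from the observation that the quotients $\L/(t) \simeq \mK[u]$ and $\L/(u) \simeq \mK[t]$ are commutative polynomial rings over $D/\gp$, all of whose simple modules are finite-dimensional. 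The same injectivity, together with Proposition \ref{a27Feb15} and the infinity of the support, shows $\ann_\CA(\mathscr{N}(N)) = 0$: infinite support excludes every maximal primitive ideal, while $Y$ (respectively $E$) acting non-trivially excludes $(Y)$ (respectively $(E)$).

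For the inverse direction, given simple $M$ with $\ann_\CA(M) = 0$ and support $\mathscr{O}(\gp)$, I would set $N := M_\gp$. Lemma \ref{a5Jun15} forces $M_Y \neq 0$, and since $\{Y^n\}$ is an Ore set the $Y$-torsion submodule of $M$ is $\CA$-stable, so by simplicity it vanishes and $M \hookrightarrow M_Y$. The $\CA_0$-module $N$ factors through $D/\gp\otimes\L$; it is simple there because for any nonzero $\CA_0$-submodule $N'\subseteq N$ one has $\CA N' = M$ by simplicity of $M$, whence $(\CA N')_\gp = \CA_0 N' = N'$ equals $N$. Infinite-dimensionality of $N$ over $D/\gp$ is forced by the same quantum-plane analysis invoked above. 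Finally, the natural $\CA_Y$-linear map $\mathscr{N}(N) = \CA_Y\otimes_{\CA_0}N \to M_Y$ sending $a\otimes n$ to $an$ is a surjection between two simple $\CA_Y$-modules, so an isomorphism, and restricting to $\CA$ yields $M \simeq \mathscr{N}(N)$.

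For statement 3, a simple infinite-dimensional $\L$-module has Gelfand--Kirillov dimension $1$, so for any finite-dimensional generating subspace $V \subseteq N$ one has $\dim_\mK\L_m V \sim m$. The $m$-th step of the standard filtration of $\CA$ applied to $V$ lands inside $\bigoplus_{|i|\leq m} Y^i \otimes \L_{cm}V$ for some constant $c>0$, giving $\dim \CA[m]V \sim m^2$ and hence $\GK(\mathscr{N}(N)) = 2$. The main obstacle throughout will be the simplicity and zero-annihilator analysis of $\mathscr{N}(N)$ in statement 2, which ultimately reduces to the careful discussion of simple modules over the quantum plane $\L$ outlined above.
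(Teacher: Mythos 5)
Your proof is correct and rests on the same ideas as the paper's: exploit the $\Z$-grading on $\CA$ coming from conjugation by $K$, reduce to simple modules over the centralizer $\CA_0 = D\otimes\L$, and use the injectivity of $t$ and $u$ on infinite-dimensional simple quantum-plane modules. The technical route is a bit different, though. The paper localizes at all of $Y, t, u$, observes $\CA_{Y,t,u}=(D\otimes\L_{t,u})[Y^{\pm1};\sigma]$ is simple, classifies the relevant simple modules there as $\CA_{Y,t,u}\otimes_{D\otimes\L_{t,u}}N'$, and then recovers $\mathscr{N}(N)$ as the $\CA$-socle, using the bijection (\ref{DpLM}) to pass between $N$ and $N'=N_{t,u}$. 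You instead localize only at $Y$, identify $\mathscr{N}(N)$ directly as the induced module $\CA_Y\otimes_{\CA_0}N$, and prove simplicity over $\CA$ by a hands-on graded argument; this bypasses the socle step (and (\ref{DpLM})) at the cost of verifying directly that $Y, X, E$ propagate any nonzero graded piece to all of $\mathscr{N}(N)$, which you reduce cleanly to injectivity of $t$ and $u$ via $\L/(t)\simeq \mK[u]$ and $\L/(u)\simeq\mK[t]$. Two small points you should make explicit. First, statement 1 asserts $\Supp(M)$ \emph{equals} an orbit $\mathscr{O}$, not merely sits inside one; Lemma \ref{a26Mar15} and (\ref{AiMp1}) only give the inclusion, and the equality comes out of your statement 2 analysis (each $\mathscr{N}(N)_{\sigma^{-i}(\gp)}=Y^i\otimes N$ is nonzero), so it is cleaner to prove statement 1 after statement 2, as the paper does. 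Second, when ruling out $\ann_\CA(\mathscr{N}(N))=(E)$ it is worth one line: if $E$ annihilated $\mathscr{N}(N)$ then $(q^{-i}u-q^{i}t)N=0$ for all $i$, and subtracting two consecutive such relations forces $tN=0$, contradicting the injectivity of $t$.
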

\begin{proof}
1 and 2. The algebra $\CA$ is a subalgebra of the simple algebra $\CA_{Y, X, \varphi}= \CA_{Y, t, u} = (D \otimes \L_{t,u})[Y^{\pm 1}; \sigma]$ (Lemma \ref{a22Feb15}.(2)). Let $[M] \in \hat{\CA}$\,($D$-torsion,\,0). Since  the elements $X$ and $\varphi$ are normal in $\CA$, $\tor_{X, \varphi}(M)=0$ (otherwise, either $XM=0$ or $\varphi M=0$, a contradiction). Now, by Lemma \ref{a5Jun15}, the $\CA$-module $M$ is $(Y, t, u)$-torsionfree. So, the map
\begin{align}
\hat{\CA}\,(D{\rm\text{-}torsion}, 0) \rightarrow \widehat{\CA_{Y, t, u}}\,(D{\rm\text{-}torsion, \soc_\CA \neq 0}), \quad [M] \mapsto [M_{Y,t, u}],
\end{align}
is a bijection with the inverse $[N] \mapsto [\soc_{\CA}(N)]$ where the condition $\soc_{\CA} \neq 0$ means that an $\CA_{Y, t, u}$-module has non-zero socle as an $\CA$-module. 
Since $q$ is not a root of 1, the centralizer of the element $K$ in $\CA_{Y, t, u}=(D \otimes \L_{t,u})[Y^{\pm 1};\sigma]$ is $D \otimes \L_{t,u}$. Therefore, every module $V \in \widehat{\CA_{Y, t, u}}\,(D{\rm\text{-}torsion}, \soc_{\CA}\neq 0)$ is of the type
$$V= \CA_{Y, t, u} \otimes_{D \otimes \L_{t,u}} N' = \mK[Y^{\pm 1}] \otimes N' = \bigoplus_{i \in \Z}Y^i \otimes N'$$
for some $N' \in \widehat{D/\gp \otimes \L_{t,u}}$ where $\gp \in \Max(D)$. In more detail, the $D$-module $N'$ is a direct sum of copies of $D/\gp$. For each $i \in \Z$, the left $D$-module $Y^i \otimes N'$ is a direct sum of copies $D/\sigma^i(\gp)$  (since $\sigma^i(\gp) (Y^i \otimes N')= Y^i \gp \otimes N' = Y^i \otimes \gp N'=0$). The ideals $\{ \sigma^i(\gp)\,| \, i \in \Z \}$ are distinct (as $q$ is not a root of 1). Therefore, the $D$-modules $Y^i \otimes N'$ and $Y^j \otimes N'$ are not isomorphic for all $i \neq j$. Any nonzero submodule $M'$ of the $\CA$-module $\CA_{Y, t,u} \otimes_{D \otimes \L_{t,u}} N' = \bigoplus_{i \in \Z} Y^i \otimes N'$ is necessarily of the form $\bigoplus_{i \in \CI}L_i$ for some $\CI \subseteq \Z$ where for each $i \in \CI$, $L_i$ is a nonzero $\CA_{0, t,u}$-submodule of the simple $\CA_{0, t,u}$-module $Y^i \otimes N'$. Therefore, $L_i = Y^i \otimes N'$ for all $i \in \CI$. Then necessarily $I = \Z$, i.e., $M' = \CA_{Y, t, u}\otimes_{D \otimes \L_{t,u}} N'$.

The action of the elements $K, E, X$ and $Y$ are given by (\ref{ModN1}). In more detail, notice that
\begin{align}
EY = \frac{q^{-1}\varphi - qX}{q^{-1}-q} \label{ADAS2} 
\end{align}
since $EY = \varphi + qYE = \varphi + q^2(EY-X)$ and the equality (\ref{ADAS2}) follows. Then
\begin{align*}
E(Y^i \otimes v) & = EY \cdot Y^{i-1} \otimes v = \frac{q^{-1}\varphi - qX}{q^{-1}-q} Y^{i-1} \otimes v \\
&= Y^{i-1}\frac{q^{-i}\varphi - q^i X}{q^{-1}-q} \otimes v = Y^{i-2} \otimes \frac{q^{-i}u -q^i t}{q^{-1}-q}v.
\end{align*}
Notice that $\Supp(V) = \mathscr{O}(\gp)$ as $V_{\sigma^i(\gp)}= Y^i\otimes N' \neq 0$ for all $i \in \Z$. By (\ref{DpLM}), we may assume that $N'= N_{t,u}$ where $N \in \widehat{D/\gp \otimes \L}(\infty{\rm\text{-}dim.})$. Then by (\ref{ModN1}), the direct sum $\mathscr{N}(N)= \bigoplus_{i \in \Z} Y^i \otimes N$  is an $\CA$-submodule of $V$. The $\CA_0$-module $N$ is simple, hence so are the $\CA_0$-modules $Y^i \otimes N \simeq N^{\o_{Y^i}^{-1}}$. Therefore, $\soc_{\CA}(V) = \mathscr{N}(N)$. Since $\Supp(\soc_{\CA}(V))= \mathscr{O}(\gp)$ as $\soc_{\CA}(V)_{\sigma^i(\gp)}= Y^i \otimes N \neq 0$ for all $i \in \Z$, statement 1 follows.

Clearly, $V= \CA_{Y,t,u}\otimes_{\CA_{0,t,u}}N' = \CA_{Y, t,u} \otimes_{\CA_{0,t,u}}N_{t,u}= \CA_{Y,t,u}\otimes_{\CA_{0,t,u}}\CA_{0,t,u} \otimes_{\CA_0} N \simeq \CA_{Y, t,u} \otimes_{\CA_0}N$. Two simple $D$-torsion modules $V \simeq \CA_{Y, t,u} \otimes_{\CA_0}N$ and $U \simeq \CA_{Y, t,u} \otimes_{\CA_0}L$ are isomorphic where $[N] \in \widehat{D/\gp \otimes \L}$ and $[L] \in \widehat{D/\gq \otimes \L}$ iff  $\mathscr{O}:= \Supp(V) = \Supp(U)$ and $_{{\CA}_{0,t,u}}V_\gp \simeq \, _{{\CA}_{0,t,u}}U_\gp$ where $\gp = \gp_{\mathscr{O}}$ is a fixed element of the orbit $\mathscr{O}$ and $\CA_{0,t,u}= (\CA_0)_{t,u}$.

Since $V_{\gp}= N_{t,u}$ and $U_\gp = U_{\sigma^i(\gq)}= Y^i \otimes L_{t,u} \simeq L_{t,u}^{\o_{Y^{-i}}}$ where $\gp = \sigma^i(\gq)$ for some $i \in \Z$, the simple ${\CA}_{0,t,u}$-modules $V_{\gp}$ and $U_{\gp}$ are isomorphic iff the simple $\CA_0$-modules $\soc_{\CA_0}(V_{\gp})= N$ and $\soc_{\CA_0}(U_{\gp}) = \soc_{\CA_0}(L_{t,u}^{\o_{Y^{-i}}}) \simeq L^{\o_{Y^{-i}}}$ are isomorphic. Now, statement 2 follows.

3. Statement 3 follows from statement 2 and the fact that $\GK(N)=1$ for all $[N] \in \widehat{D/\gp \otimes \L}(\infty{\rm\text{-}dim.})$.
\end{proof}
So, Theorem \ref{B23Mar15} and Theorem \ref{25Mar15} give an explicit description of the set $\hat{\CA}$\,(weight) $= \hat{\CA}$\,($D$-torsion) (see Lemma \ref{a26Mar15}).

\begin{corollary}\label{b26Mar15} 
Let $[M] \in \hat{\CA}\,({\rm weight}, \infty{\rm\text{-}dim.})$
\begin{enumerate}
\item $\Supp(M)$ is a single orbit in $\Max(D)/\langle \sigma \rangle.$
\item $\ann_\CA(M) \neq 0$ iff $\dim_{\mK}(M_\gp) < \infty$ for all $\gp \in \Supp(M)$. In this case, the dimension $\dim_{\mK}(M_\gp)$ is the same for all $\gp \in \Supp(M)$; and if, in additional, $\mK$ is an algebraically closed field. Then $\dim_{\mK}(M_{\gp})=1$ for all $\gp \in \Supp(M)$.
\end{enumerate}
\end{corollary}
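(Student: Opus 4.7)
The plan is to read off both statements from the explicit classification of infinite-dimensional simple weight modules already available in the paper. By Lemma \ref{a26Mar15}, ``weight'' and ``$D$-torsion'' coincide for simple modules, so Proposition \ref{a27Feb15} and Theorem \ref{B23Mar15} partition $[M] \in \hat{\CA}(\text{weight}, \infty\text{-dim.})$ into exactly three cases: (II) $\ann_\CA(M) = (X,\gq) \in \CQ$; (III) $\ann_\CA(M) = (\varphi,\gr) \in \CR$; or (I) $\ann_\CA(M) = 0$. The finite-dimensional $\CP$-class of Theorem \ref{A23Mar15} is excluded by hypothesis, and by Theorem \ref{B23Mar15}.1 the annihilators $(Y)$ and $(E)$ are impossible. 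I will process these three cases uniformly, extracting in each (i) the full $D$-support and (ii) the $\mK$-dimension of a single weight space.

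For part 1, case (I) is immediate from Theorem \ref{25Mar15}.1, which states $\Supp(M) = \mathscr{O}(\gp)$. In cases (II)--(III), Theorem \ref{B23Mar15}.2 presents $M$ as $(\CA/\gp)/(\CA/\gp)\gm$ inside $L[K^{\pm 1}][Y^{\pm 1};\rho]$ with $L \in \{L_\gq, L_\gr\}$ finite over $\mK$ and $\rho(K) = qK$. Using $KY^i = q^{-i}Y^i K$, the vectors $Y^i\bar{1}$ (for $i \in \Z$) span $M$ and are annihilated, as elements of the $D_\gq$-module $M$, by distinct maximal ideals forming one full $\langle \rho \rangle$-orbit in $\Max(D_\gq)$; contracting each to $D$, I obtain a single infinite $\langle \sigma \rangle$-orbit as $\Supp_D(M)$, since $q$ is not a root of unity makes all such orbits infinite.

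For part 2, in the non-faithful cases, $M_\gm \simeq L[K^{\pm 1}]/\gm$ as an $L[K^{\pm 1}]$-module: $\gm$ is maximal in a Laurent polynomial ring over a field, so this quotient is a finite extension of $L$, hence finite-dimensional over $\mK$. Any $D$-weight space $M_\gn$ is the (finite) direct sum over $\gm \in \Max(D_\gq)$ with $\gm \cap D = \gn$ of the corresponding $M_\gm$, so $\dim_\mK M_\gn < \infty$; and multiplication by $Y^i$ gives isomorphisms between weight spaces across the orbit, forcing constant $\mK$-dimension. When $\mK = \bar\mK$, $L_\gq = L_\gr = \mK$ and $\gm = (K-\lambda)$, giving $\dim_\mK M_\gm = 1$. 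In the faithful case, Theorem \ref{25Mar15} yields $M_{\sigma^i(\gp)} = Y^i \otimes N$, isomorphic as $\mK$-space to $N$, which is an infinite-dimensional simple $D/\gp \otimes \L$-module by construction; so $\dim_\mK M_\gn = \infty$, supplying the converse direction of the biconditional.

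The main technical nuisance is bookkeeping the $K$-weight shifts: verifying that in case (I) the formula $K(Y^i \otimes v) = q^{-i} Y^i \otimes Kv$ of (\ref{ModN1}) places $Y^i \otimes N$ in the $D$-weight space $M_{\sigma^i(\gp)}$ (matching Theorem \ref{25Mar15}), and in cases (II)--(III) converting between $\Max(D_\gq)/\langle\rho\rangle$ and $\Max(D)/\langle\sigma\rangle$ across the finite extension $L/\mK$. Once conventions are fixed, none of the computations involves anything beyond a direct appeal to the cited theorems.
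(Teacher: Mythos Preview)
Your proposal is correct and follows precisely the approach the paper intends: the paper's proof is the single sentence ``The result follows at once from the classification of simple weight $\CA$-modules,'' and you have simply written out what that sentence means by running through the three primitive-ideal cases supplied by Theorem \ref{B23Mar15} and Theorem \ref{25Mar15}. Your identification of the one genuine bookkeeping point---matching $D_\gq$-weights to $D$-weights across the finite extension $L_\gq/\mK$---is apt, and your handling of it (contraction along $D\hookrightarrow D_\gq$ intertwines $\sigma$ with $\rho$, and each $D_\gq/\gm$ is $D$-isotypic) is correct, though in fact each $D$-weight space meets exactly one $D_\gq$-weight space here since the $\sigma^i(\gn)$ are pairwise distinct.
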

\begin{proof}
The result follows at once from the classification of simple weight $\CA$-modules.
\end{proof}

\small{

\begin{minipage}[t]{0.5\textwidth}
V. V. Bavula 

Department of Pure Mathematics

University of Sheffield

Hicks Building

Sheffield S3 7RH

UK

email: v.bavula@sheffield.ac.uk 
\end{minipage}
\begin{minipage}[t]{0.5\textwidth}
T. Lu

Department of Pure Mathematics

University of Sheffield

Hicks Building

Sheffield S3 7RH

UK

email: smp12tl@sheffield.ac.uk
\end{minipage}

\end{document}